\let\origsection=\section \def\section{\@ifstar{\origsection*}{\mysection}}
\def\mysection{\@startsection{section}{1}\z@{.7\linespacing\@plus\linespacing}{.5\linespacing}{\normalfont\scshape\centering\S}}
\renewcommand{\PrintDOI}[1]{\doi{#1}}
\numberwithin{equation}{section}
\numberwithin{figure}{section}
   \def\MR#1{}
\def\greek#1{\expandafter\@greek\csname c@#1\endcsname}
\def\Greek#1{\expandafter\@Greek\csname c@#1\endcsname}
\def\@greek#1{\ifcase#1
	\or $\alpha$%
	\or $\beta$%
	\or $\gamma$%
	\or $\delta$%
	\or $\epsilon$%
	\or $\zeta$%
	\or $\eta$%
	\or $\theta$%
	\or $\iota$%
	\or $\kappa$%
	\or $\lambda$%
	\or $\mu$%
	\or $\nu$%
	\or $\xi$%
	\or $o$%
	\or $\pi$%
	\or $\rho$%
	\or $\sigma$%
	\or $\tau$%
	\or $\upsilon$%
	\or $\phi$%
	\or $\chi$%
	\or $\psi$%
	\or $\omega$%
\fi}
\def\@Greek#1{\ifcase#1
	\or $\mathrm{A}$%
	\or $\mathrm{B}$%
	\or $\Gamma$%
	\or $\Delta$%
	\or $\mathrm{E}$%
	\or $\mathrm{Z}$%
	\or $\mathrm{H}$%
	\or $\Theta$%
	\or $\mathrm{I}$%
	\or $\mathrm{K}$%
	\or $\Lambda$%
	\or $\mathrm{M}$%
	\or $\mathrm{N}$%
	\or $\Xi$%
	\or $\mathrm{O}$%
	\or $\Pi$%
	\or $\mathrm{P}$%
	\or $\Sigma$%
	\or $\mathrm{T}$%
	\or $\mathrm{Y}$%
	\or $\Phi$%
	\or $\mathrm{X}$%
	\or $\Psi$%
	\or $\Omega$%
\fi}
\AddEnumerateCounter{\greek}{\@greek}{24}
\AddEnumerateCounter{\Greek}{\@Greek}{12}
\let\polishlcross=\l
\def\l{\ifmmode\ell\else\polishlcross\fi}
\def\paragraph#1{%
  %\smallskip%
  \noindent\textbf{#1.}\enspace}
\let\emptyset=\varnothing
\let\setminus=\smallsetminus
\def\moverlay{\mathpalette\mov@rlay}
\def\mov@rlay#1#2{\leavevmode\vtop{   \baselineskip\z@skip \lineskiplimit-\maxdimen
   \ialign{\hfil$\m@th#1##$\hfil\cr#2\crcr}}}
\newcommand{\charfusion}[3][\mathord]{
    #1{\ifx#1\mathop\vphantom{#2}\fi
        \mathpalette\mov@rlay{#2\cr#3}
      }
    \ifx#1\mathop\expandafter\displaylimits\fi}
\DeclareFontFamily{U}  {MnSymbolC}{}
\DeclareSymbolFont{MnSyC}         {U}  {MnSymbolC}{m}{n}
\DeclareFontShape{U}{MnSymbolC}{m}{n}{
    <-6>  MnSymbolC5
   <6-7>  MnSymbolC6
   <7-8>  MnSymbolC7
   <8-9>  MnSymbolC8
   <9-10> MnSymbolC9
  <10-12> MnSymbolC10
  <12->   MnSymbolC12}{}
\DeclareMathSymbol{\powerset}{\mathord}{MnSyC}{180}
\let\epsilon=\varepsilon
\let\eps=\epsilon
\let\rho=\varrho
\let\theta=\vartheta
\let\kappa=\varkappa
\let\E=\EE
\def\PP{\Pr}
\def\rm{\mathbb{RM}}
\newcommand{\cC}{\mathcal{C}}
\newcommand{\cF}{\mathcal{F}}
\newcommand{\cM}{\mathcal{M}}
\newcommand{\cP}{\mathcal{P}}
\newcommand{\cQ}{\mathcal{Q}}
\newcommand{\cR}{\mathcal{R}}
\theoremstyle{plain}
\newtheorem{thm}{Theorem}[section]
\newtheorem{theorem}[thm]{Theorem}
\newtheorem{prop}[thm]{Proposition}
\newtheorem{cor}[thm]{Corollary}
\newtheorem{c-cor}[thm]{Co-Corollary}
\newtheorem{lemma}[thm]{Lemma}
\newtheorem{observation}[thm]{Observation}
\theoremstyle{definition}
\newtheorem{rem}[thm]{Remark}
\newtheorem*{dfn*}{Definition}
\newtheorem{exmp}{Example}
\let\phi=\varphi
\DeclareMathOperator{\tr}{tr}
\begin{document}

\title[Substructures in random ordered matchings]{Homogeneous substructures in random ordered hyper-matchings}
%\footnote{\protect\myfootnotetitle}

\author{Andrzej Dudek}
\address{Department of Mathematics, Western Michigan University, Kalamazoo, MI, USA}
\email{\tt andrzej.dudek@wmich.edu}
\thanks{The first author was supported in part by Simons Foundation Grant MPS-TSM-00007551.}

\author{Jaros\l aw Grytczuk}
\address{Faculty of Mathematics and Information Science, Warsaw University of Technology, Warsaw, Poland}
\email{jaroslaw.grytczuk@pw.edu.pl}
\thanks{The second author was supported in part by Narodowe Centrum Nauki, grant 2020/37/B/ST1/03298.}

\author{Jakub Przyby{\l}o}
\address{AGH University of Krakow, al. A. Mickiewicza 30, 30-059 Krakow, Poland}
\email{jakubprz@agh.edu.pl}
\thanks{The third author was supported in part by the AGH University of Krakow grant no. 16.16.420.054, funded by the Polish Ministry of Science and Higher Education.}

\author{Andrzej Ruci\'nski}
\address{Department of Discrete Mathematics, Adam Mickiewicz University, Pozna\'n, Poland}
\email{\tt rucinski@amu.edu.pl}
\thanks{The last author was supported by Narodowe Centrum Nauki,  grant 2024/53/B/ST1/00164.}

\begin{abstract}An ordered $r$-uniform matching of size $n$ is a collection of $n$ pairwise disjoint $r$-subsets of a linearly ordered set of $rn$ vertices. For $n=2$, such a matching is called an \emph{$r$-pattern}, as it represents one of $\tfrac12\binom{2r}r$ ways two disjoint edges may intertwine. Given a  set $\cP$ of $r$-patterns, a \emph{$\cP$-clique} is a matching with all pairs of edges order-isomorphic to a member of $\cP$.

In this paper we are interested in the size of a largest $\cP$-clique in a \emph{random} ordered $r$-uniform matching selected uniformly from all such matchings on a fixed vertex set $[rn]$.
	We determine this size (up to multiplicative constants) for several sets $\cP$, including all sets of size $|\cP|\le2$, the set $\cR^{(r)}$ of all $r$-partite patterns, as well as  sets $\cP$ enjoying a Boolean-like, symmetric structure.
\end{abstract}

\maketitle

%\tableofcontents

%\setcounter{footnote}{1}

\section{Introduction}
In this paper we are interested in the maximum size of sub-matchings of random ordered uniform matchings with all pairs of edges forming prescribed patterns. Before stating our results we present necessary definitions and background.

\subsection{Matchings and words}
A hypergraph on a linearly ordered vertex set is called \emph{ordered},  and it is \emph{$r$-uniform}, $r\ge1$, if all its edges have the same size $r$.
Throughout the paper, $r$ is always a fixed constant, while  $n$  will grow to infinity.

Two ordered hypergraphs are \emph{order-isomorphic} if there is an isomorphism between them preserving their linear orders.
An \emph{ordered $r$-matching} of \emph{size} $n$ is an ordered $r$-uniform hypergraph consisting of $n$ pairwise disjoint edges (and no isolated vertices). For fixed $r$ and $n$, there are precisely
$\frac{(rn)!}{(r!)^n\, n!}$ distinct ordered $r$-matchings of size $n$. The family of all of them will be denoted by $\cM^{(r)}_n$. For instance, the family $\cM^{(2)}_2$ consists of three distinct matchings depicted in Figure \ref{ESZ1}.

\begin{figure}[ht]
	\captionsetup[subfigure]{labelformat=empty}
	\begin{center}
		
		\scalebox{1}
		{
			\centering
			\begin{tikzpicture}
				[line width = .5pt,
				vtx/.style={circle,draw,black,very thick,fill=black, line width = 1pt, inner sep=0pt},
				]
				
				%Alignment
				\coordinate (0) at (0.5,0) {};
				\node[vtx] (1) at (1,0) {};
				\node[vtx] (2) at (2,0) {};
				\node[vtx] (3) at (3,0) {};
				\node[vtx] (4) at (4,0) {};
				\coordinate (5) at (4.5,0) {};
				\draw[line width=0.3mm, color=lightgray]  (0) -- (5);
				\fill[fill=black, outer sep=1mm]  (1) circle (0.1) node [below] {$1$};
				\fill[fill=black, outer sep=1mm]  (2) circle (0.1) node [below] {$2$};
				\fill[fill=black, outer sep=1mm]  (3) circle (0.1) node [below] {$3$};
				\fill[fill=black, outer sep=1mm]  (4) circle (0.1) node [below] {$4$};
				\draw[line width=0.5mm, color=black, outer sep=2mm] (2) arc (0:180:0.5);
				\draw[line width=0.5mm, color=black, outer sep=2mm] (4) arc (0:180:0.5);
				
				%Nesting
				\coordinate (0) at (5.5,0) {};
				\node[vtx] (1) at (6,0) {};
				\node[vtx] (2) at (7,0) {};
				\node[vtx] (3) at (8,0) {};
				\node[vtx] (4) at (9,0) {};
				\coordinate (5) at (9.5,0) {};
				\draw[line width=0.3mm, color=lightgray]  (0) -- (5);
				\fill[fill=black, outer sep=1mm]  (1) circle (0.1) node [below] {$1$};
				\fill[fill=black, outer sep=1mm]  (2) circle (0.1) node [below] {$2$};
				\fill[fill=black, outer sep=1mm]  (3) circle (0.1) node [below] {$3$};
				\fill[fill=black, outer sep=1mm]  (4) circle (0.1) node [below] {$4$};
				\draw[line width=0.5mm, color=black, outer sep=2mm] (3) arc (0:180:0.5);
				\draw[line width=0.5mm, color=black, outer sep=2mm] (4) arc (0:180:1.5);
				
				%Crossing
				\coordinate (0) at (10.5,0) {};
				\node[vtx] (1) at (11,0) {};
				\node[vtx] (2) at (12,0) {};
				\node[vtx] (3) at (13,0) {};
				\node[vtx] (4) at (14,0) {};
				\coordinate (5) at (14.5,0) {};
				\draw[line width=0.3mm, color=lightgray]  (0) -- (5);
				\fill[fill=black, outer sep=1mm]  (1) circle (0.1) node [below] {$1$};
				\fill[fill=black, outer sep=1mm]  (2) circle (0.1) node [below] {$2$};
				\fill[fill=black, outer sep=1mm]  (3) circle (0.1) node [below] {$3$};
				\fill[fill=black, outer sep=1mm]  (4) circle (0.1) node [below] {$4$};
				\draw[line width=0.5mm, color=black, outer sep=2mm] (3) arc (0:180:1);
				\draw[line width=0.5mm, color=black, outer sep=2mm] (4) arc (0:180:1);
				
			\end{tikzpicture}
		}
		
	\end{center}
	\caption{Three distinct $2$-matchings of size two.}
	\label{ESZ1}
	
\end{figure}

A natural and convenient way to represent ordered $r$-matchings is in terms of \emph{words}. One simply fixes an alphabet of size $n$ and assigns different letters to different edges (so the obtained word contains each letter exactly $r$ times). For instance, if $r=3$ and $M=\{\{1,2,4\}, \{3,10,12\},\{5,7,11\},\{6,8,9\}\}$,
then two examples of words representing $M$ are $AABACDCDDBCB$ and $CCECDFDFFEDE$. Obviously, there are many such words which differ only in the letters selected to the alphabet and their assignment to the edges. All such words are  equivalent and throughout the paper we identify ordered matchings with the equivalence classes of their  word representations. Just for convenience, to represent a given matching we typically choose a word in which the first occurrences of letters appear alphabetically.

 An important subclass of matchings can be defined via
 the notion of interval chromatic number introduced in~\cite{Pach-Tardos}. An \emph{interval coloring} of an ordered hypergraph $G$ is a partition of its linearly ordered vertex set into intervals (colors) such that each edge of $G$ has at most one vertex in each of these intervals. The least number of colors in any interval coloring of $G$ is the \emph{interval chromatic number} of $G$, denoted by $\chi_{<}(G)$.
Clearly, if $G$ is an ordered $r$-uniform hypergraph, then  $\chi_{<}(G)\ge r$. An ordered $r$-matching $M$ with $\chi_<(M)=r$ will be called \emph{$r$-partite}. In such a matching, every edge contains exactly one vertex from each of the $r$ consecutive blocks of size $n$. Equivalently, any word representing an  $r$-partite $r$-matching must consist of $r$ consecutive blocks each containing all letters. Note also that, trivially, every sub-matching of an $r$-partite $r$-matching is $r$-partite itself.

For instance, the following $3$-matching is visibly $3$-partite:
\[{\color{red}{ABCDEFG}}\:\!{\color{cyan}{\underline{ACGEFBD}}}\:\!{\color{blue}{\underline{\underline{GFEDCBA}}}}.\]
Note that here every edge is of the form $\color{red}{X}\:\!\color{cyan}{\underline{X}}\:\!\color{blue}{\underline{\underline{X}}}$.
Also, the sub-matching consisting only of the edges represented by letters $A$ and $B$, $ABABBA$, is 3-partite too.

We will often use the power notation  $W^k$ to write concisely the word
$$\underbrace{W\;W\;\cdots W}_{k}$$ consisting of $k$ repetitions of the same word $W$. For instance, $ABABAB=(AB)^3$ and $ABBBABAAAA=AB^3ABA^4$.

\subsection{Patterns and cliques}
Ordered $r$-matchings of size $n=2$ are called \emph{$r$-patterns}. For each $r\geqslant 2$ there are exactly $\tfrac12\binom{2r}r$ of them. The set of all $r$-patterns will be denoted  by $\cP^{(r)}$ (instead of $\cM_2^{(r)}$). For instance, in letter notation, $\cP^{(2)}=\{AABB,\;ABBA,\;ABAB\}$, (cf.  Figure \ref{ESZ1}).

For a fixed pattern $P$, a \emph{$P$-clique} is an ordered matching in which every pair of edges forms pattern $P$. In Figure \ref{ESZ2} one can see three $P$-cliques of size four corresponding to the three $2$-patterns in $\cP^{(2)}$.

\begin{figure}[ht]
	\captionsetup[subfigure]{labelformat=empty}
	\begin{center}
		
		\scalebox{1}
		{
			\centering
			\begin{tikzpicture}
				[line width = .5pt,
				vtx/.style={circle,draw,black,very thick,fill=black, line width = 1pt, inner sep=0pt},
				]
				
				%Line
				\coordinate (0) at (0.25,0) {};
				\node[vtx] (1) at (0.5,0) {};
				\node[vtx] (2) at (1,0) {};
				\node[vtx] (3) at (1.5,0) {};
				\node[vtx] (4) at (2,0) {};
				\node[vtx] (5) at (2.5,0) {};
				\node[vtx] (6) at (3,0) {};
				\node[vtx] (7) at (3.5,0) {};
				\node[vtx] (8) at (4,0) {};
				\coordinate (9) at (4.25,0) {};
				\draw[line width=0.3mm, color=lightgray]  (0) -- (9);
				\fill[fill=black, outer sep=1mm]  (1) circle (0.1) node [below] {$A$};
				\fill[fill=black, outer sep=1mm]  (2) circle (0.1) node [below] {$A$};
				\fill[fill=black, outer sep=1mm]  (3) circle (0.1) node [below] {$B$};
				\fill[fill=black, outer sep=1mm]  (4) circle (0.1) node [below] {$B$};
				\fill[fill=black, outer sep=1mm]  (5) circle (0.1) node [below] {$C$};
				\fill[fill=black, outer sep=1mm]  (6) circle (0.1) node [below] {$C$};
				\fill[fill=black, outer sep=1mm]  (7) circle (0.1) node [below] {$D$};
				\fill[fill=black, outer sep=1mm]  (8) circle (0.1) node [below] {$D$};
				\draw[line width=0.5mm, color=black, outer sep=2mm] (2) arc (0:180:0.25);
				\draw[line width=0.5mm, color=black, outer sep=2mm] (4) arc (0:180:0.25);
				\draw[line width=0.5mm, color=black, outer sep=2mm] (6) arc (0:180:0.25);
				\draw[line width=0.5mm, color=black, outer sep=2mm] (8) arc (0:180:0.25);
				
				%Stack
				\coordinate (0) at (5.25,0) {};
				\node[vtx] (1) at (5.5,0) {};
				\node[vtx] (2) at (6,0) {};
				\node[vtx] (3) at (6.5,0) {};
				\node[vtx] (4) at (7,0) {};
				\node[vtx] (5) at (7.5,0) {};
				\node[vtx] (6) at (8,0) {};
				\node[vtx] (7) at (8.5,0) {};
				\node[vtx] (8) at (9,0) {};
				\coordinate (9) at (9.25,0) {};
				\draw[line width=0.3mm, color=lightgray]  (0) -- (9);
				\fill[fill=black, outer sep=1mm]  (1) circle (0.1) node [below] {$A$};
				\fill[fill=black, outer sep=1mm]  (2) circle (0.1) node [below] {$B$};
				\fill[fill=black, outer sep=1mm]  (3) circle (0.1) node [below] {$C$};
				\fill[fill=black, outer sep=1mm]  (4) circle (0.1) node [below] {$D$};
				\fill[fill=black, outer sep=1mm]  (5) circle (0.1) node [below] {$D$};
				\fill[fill=black, outer sep=1mm]  (6) circle (0.1) node [below] {$C$};
				\fill[fill=black, outer sep=1mm]  (7) circle (0.1) node [below] {$B$};
				\fill[fill=black, outer sep=1mm]  (8) circle (0.1) node [below] {$A$};
				\draw[line width=0.5mm, color=black, outer sep=2mm] (5) arc (0:180:0.25);
				\draw[line width=0.5mm, color=black, outer sep=2mm] (6) arc (0:180:0.75);
				\draw[line width=0.5mm, color=black, outer sep=2mm] (7) arc (0:180:1.25);
				\draw[line width=0.5mm, color=black, outer sep=2mm] (8) arc (0:180:1.75);
				
				%Wave
				\coordinate (0) at (10.25,0) {};
				\node[vtx] (1) at (10.5,0) {};
				\node[vtx] (2) at (11,0) {};
				\node[vtx] (3) at (11.5,0) {};
				\node[vtx] (4) at (12,0) {};
				\node[vtx] (5) at (12.5,0) {};
				\node[vtx] (6) at (13,0) {};
				\node[vtx] (7) at (13.5,0) {};
				\node[vtx] (8) at (14,0) {};
				\coordinate (9) at (14.25,0) {};
				\draw[line width=0.3mm, color=lightgray]  (0) -- (9);
				\fill[fill=black, outer sep=1mm]  (1) circle (0.1) node [below] {$A$};
				\fill[fill=black, outer sep=1mm]  (2) circle (0.1) node [below] {$B$};
				\fill[fill=black, outer sep=1mm]  (3) circle (0.1) node [below] {$C$};
				\fill[fill=black, outer sep=1mm]  (4) circle (0.1) node [below] {$D$};
				\fill[fill=black, outer sep=1mm]  (5) circle (0.1) node [below] {$A$};
				\fill[fill=black, outer sep=1mm]  (6) circle (0.1) node [below] {$B$};
				\fill[fill=black, outer sep=1mm]  (7) circle (0.1) node [below] {$C$};
				\fill[fill=black, outer sep=1mm]  (8) circle (0.1) node [below] {$D$};
				\draw[line width=0.5mm, color=black, outer sep=2mm] (5) arc (0:180:1);
				\draw[line width=0.5mm, color=black, outer sep=2mm] (6) arc (0:180:1);
				\draw[line width=0.5mm, color=black, outer sep=2mm] (7) arc (0:180:1);
				\draw[line width=0.5mm, color=black, outer sep=2mm] (8) arc (0:180:1);
				
			\end{tikzpicture}
		}
	
	\end{center}
	
	\caption{Cliques of size $4$ corresponding to the three patterns in $\cP^{(2)}$.}
	\label{ESZ2}
	
\end{figure}

It was proved in \cite{DGR-match} that every ordered $2$-matching of size $n$ contains, for \emph{some} $P\in \cP^{(2)}$, a $P$-clique of size $\lceil n^{1/3}\rceil$ (and this is optimal), while a \emph{random} such matching contains (with high probability) a $P$-clique of size $\Theta(n^{1/2})$ for \emph{every} $P\in \cP^{(2)}$. These results were generalized in \cite{JCTB_paper} to arbitrary $r\geqslant 3$, with a necessary restriction to \emph{collectable} patterns $P$, that is, to those patterns that allow building arbitrarily large $P$-cliques.

The following characterization of collectable patterns was established in \cite[Proposition 2.1]{JCTB_paper}: an $r$-pattern $P$ is collectable if and only if it is \emph{splittable} into consecutive \emph{blocks}, $P=S_1\cdots S_s$, of the form $S_i=A^tB^t$ or $S_i=B^tA^t$. Obviously, such a splitting is unique.  For instance,
 \begin{equation}\label{P}
 P=AB|BBAA|BA|AAABBB|BBAA|BA|AB|AB
 \end{equation}
is a collectable $12$-pattern with the suitable splitting into eight blocks $S_i$.
 From this characterization one can easily deduce that for every $r\geqslant 3$ there are exactly $3^{r-1}$ collectable patterns in $\cP^{(r)}$. It was also proved in \cite[Proposition 2.1]{JCTB_paper} that non-collectable patterns $P$ cannot even create $P$-cliques of size three.

 Every collectable $r$-pattern $P$ uniquely determines a \emph{composition} $\lambda_P=(\lambda_1,\dots,\lambda_s)$ of the integer $r$, that is, an \emph{ordered partition} $r=\lambda_1+\cdots+\lambda_s$, where $\lambda_i=|S_i|/2$, for $i=1,\dots,s$.
  %(\cite{heubach2009combinatorics}, \cite{macmahon1893xvii}).
  For the 12-pattern $P$ defined in~\eqref{P} we have $\lambda_P=(1,2,1,3,2,1,1,1)$.

Note that for a collectable pattern $P$ with $s$ blocks in the splitting there are exactly $2^{s-1}$  patterns $Q$ with $\lambda_Q=\lambda_P$ (including $P$).   Let us call such pairs $\{P,Q\}$  \emph{harmonious}.
For an integer $r\ge2$ and a composition $\lambda$ of $r$, let $\mathcal P(\lambda)$ be the set of all $r$-patterns $P$ with $\lambda_P=\lambda$. So, all members of $\mathcal P(\lambda)$ are mutually harmonious.

Recall that any word representing an  $r$-partite $r$-matching must consist of $r$ consecutive blocks each containing all letters. Thus, $r$-partite $r$-patterns can be characterized as those which take on the form $P=S_1\cdots S_r$, where $S_1=AB$, while any other block $S_i$ is either $AB$ or $BA$.
   Let $\lambda^{(r)}:=(1,1,\dots,1)$. Then $\cR^{(r)}:=\cP(\lambda^{(r)})$ is the set of all $r$-partite $r$-patterns and $|\cR^{(r)}|=2^{r-1}$ for all $r\geqslant2$.

%It follows from that definition that any pattern occurring in an $r$-partite matching $M$ must be $r$-partite itself. Conversely, every $\cP$-clique with all $P\in\cP$ being $r$-partite is $r$-partite itself. Let $\cR^{(r)}:=\cP(\lambda^{(r)})$ be the set of all $r$-partite $r$-patterns.

Table \ref{table:relations} contains all patterns in $\cP^{(3)}$ among which $P_1,P_2,\ldots,P_9$ are collectable, while $P_0$ is the only pathological one. Moreover,  $\cR^{(3)}=\{P_6,P_7, P_8,P_9\}$. Clearly, all members of $\cR^{(3)}$ are mutually harmonious, so are
$P_2=AABBBA$ and $P_3=AABBAB$ (with composition $\lambda=(2,1)$) as well as $P_4=ABBBAA$ and $P_5=ABAABB$ (with composition $\lambda=(1,2)$).

It was proved in \cite{JCTB_paper} that for $r\ge2$ \emph{every} ordered $r$-matching of size $n$ contains, for \emph{some} collectable pattern $P\in \cP^{(r)}$, a $P$-clique of size  $\Omega(n^{1/3^{r-1}})$, while a \emph{random} ordered $r$-matching contains a $P$-clique of size $\Theta(n^{1/r})$ for \emph{every} collectable pattern $P\in \cP^{(r)}$. The former result was subsequently improved to $\Theta(n^{1/(2^{r}-1)})$ by combined work of Anastos, Jin, Kwan, and Sudakov \cite{AJKS} and Sauermann and Zakharov \cite{SZ}.

\begin{table}[h!]
	\centering
	\begin{tabular}{ |c|c| }
		\hline
		$P_0=AABABB$&$P_5=ABAABB$\\
		\hline
		$P_1=AAABBB$&$P_6=ABBABA$\\
		\hline
		$P_2=AABBBA$&$P_7=ABBAAB$\\
		\hline
		$P_3=AABBAB$&$P_8=ABABBA$\\
		\hline
		$P_4=ABBBAA,$&$P_9=ABABAB$\\
			\hline
	\end{tabular}
\caption{All 3-patterns; among them only $P_0$ is non-collectable, while $P_6-P_9$ are 3-partite.}
\label{table:relations}
\end{table}

\subsection{Multi-patterned  cliques}
In this paper our goal is to study more complex substructures of random $r$-matchings than $P$-cliques. Let $\cP \subseteq \cP^{(r)}$ be a fixed subset of $r$-patterns. An ordered $r$-matching $M$ is called a \emph{$\cP$-clique} if every pair of edges in $M$ forms a pattern belonging to~$\cP$. Let $z_{\cP}(M)$ denote the maximum size of a $\cP$-clique in $M$. Note that if $\cQ\subset \cP$, then every $\cQ$-clique is a $\cP$-clique and thus, $z_{\cQ}(M)\le z_{\cP}(M)$. Also, if $N\subset M$, then $z_{\cP}(N)\le z_{\cP}(M)$. Throughout, we will be referring to both these trivial properties as \emph{monotonicity}.

%Let us remark that the term ``clique'' appropriately indicates that we are dealing with a Ramsey type problem about cliques in edge-colored complete graphs. Indeed, for a given matching $M\in {\mathcal M}_n^{(r)}$, one may imagine a complete graph $K_n$ whose vertices are the edges of $M$ and whose edges (that is, pairs of edges of $M$)  are colored with patterns from $\cP^{(r)}$ they form. Then $P$-cliques in $M$ correspond exactly to monochromatic cliques of ``color'' $P$ in $K_n$, while $\cP$-cliques, with $|\cP|\geqslant 2$, correspond to $|\cP|$-colored cliques with colors belonging to the prescribed set $\cP$.

 Let $\rm^{(r)}_{n}$ denote a random ordered $r$-matching of size $n$, that is, an $r$-matching picked uniformly at random from the set ${\mathcal M}_n^{(r)}$ of all ordered $r$-matchings on the set $[rn]$. Using the introduced notation, our main problem can be stated as follows.

 {\bf Problem.} %\label{Problem Main}
 	\emph{For a fixed $r\geqslant 2$ and for a given subset of patterns $\cP\subseteq \cP^{(r)}$, determine the order of magnitude of the random variable $z_{\cP}(\rm^{(r)}_{n})$.}

By  ``order of magnitude'' we mean  some function $f(n)$ such that $z_{\cP}(\rm_n^{(r)})=\Theta(f(n))$ \emph{asymptotically almost surely}, that is, with probability  tending to one as $n\to\infty$. In the course of the paper we will use the common abbreviation, namely \emph{a.a.s.}, for this phrase. Occasionally, we will be able to pinpoint the value of $z_{\cP}(\rm_n^{(r)})$ \emph{asymptotically}. We write $a_n\sim b_n$ whenever $a_n/b_n\to1$ as $n\to\infty$ and, for a sequence of random variables $X_n$, we write $X_n\sim a_n$ if $X_n/a_n$ converges to 1 in probability.

 As was signaled earlier, the order of magnitude of $z_{\cP}(\rm^{(r)}_{n})$ when $|\cP|=1$ has already been determined.
		
\begin{thm}[\cite{JCTB_paper}]\label{thm:random_one_pattern} For $r\ge2$ and every collectable $r$-pattern $P$, a.a.s.,
	\[
	z_{P}(\rm^{(r)}_{n})=\Theta(n^{1/r}).
	\]
\end{thm}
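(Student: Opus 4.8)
The plan is to prove the matching upper and lower bounds separately; the upper bound is a short first‑moment computation, whereas the lower bound needs a concentration argument, which is the main obstacle.

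\emph{Upper bound.} The key is a rigidity property of $P$‑cliques coming from collectability. Write $\lambda_P=(\lambda_1,\dots,\lambda_s)$ and $P=S_1\cdots S_s$. In any $P$‑clique, restricting two edges to the block of positions occupied by $S_i$ shows that each of the two edges contributes an \emph{interval} of $\lambda_i$ vertices there, that these two intervals do not interleave, and that their relative order in this block agrees with (respectively, is the reverse of) their relative order in the $S_1$‑block according as $S_i$ has the same type as $S_1$ or not. Hence in a $P$‑clique of size $k$ the $i$‑th block is forced to be the interval of $\lambda_i k$ positions of its $rk$‑element vertex set, and the clique is then uniquely determined by that vertex set. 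Therefore $[rn]$ contains exactly $\binom{rn}{rk}$ $P$‑cliques of size $k$, so, with $p_k=\tfrac{(r!)^k\,n!\,(rn-rk)!}{(rn)!\,(n-k)!}$ the probability that $k$ prescribed pairwise disjoint $r$‑sets are all edges of $\rm^{(r)}_n$, the expected number of $P$‑cliques of size $k$ in $\rm^{(r)}_n$ equals $\binom{rn}{rk}p_k=\tfrac{(r!)^k\,n!}{(rk)!\,(n-k)!}\le\bigl(\tfrac{r!\,e^r\,n}{r^r k^r}\bigr)^k$. Choosing $C$ so large that $r!\,e^r/(r^rC^r)<\tfrac12$, this is at most $2^{-k}$ for every $k\ge Cn^{1/r}$; summing over such $k$ and applying Markov's inequality gives $z_P(\rm^{(r)}_n)<Cn^{1/r}$ a.a.s.

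\emph{Lower bound.} Fix a small constant $\delta>0$, let $t=\lfloor\delta n^{1/r}\rfloor$, and partition $[rn]$ into consecutive zones $Z_1<\dots<Z_s$ with $|Z_i|=\lambda_i n$, and each $Z_i$ into $t$ consecutive windows $W_{i,1}<\dots<W_{i,t}$. Declare an index $\ell\in[t]$ \emph{good} if $\rm^{(r)}_n$ has an edge with exactly $\lambda_i$ vertices in $W_{i,h_i(\ell)}$ for every $i$, where $h_i$ is the identity or the reversal of $[t]$ depending on whether $S_i$ has the same type as $S_1$. Any choice of one witnessing edge per good index is then a $P$‑clique of size equal to the number $Y$ of good indices: for $\ell\ne m$ the corresponding edges have their $i$‑th chunks in the disjoint, order‑determined windows $W_{i,h_i(\ell)}$ and $W_{i,h_i(m)}$, so every pair realizes exactly $P$. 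The expected number of edges witnessing a given index is $\bigl(\tfrac{r!\prod_i\lambda_i^{\lambda_i}}{r^r\prod_i\lambda_i!}\bigr)\tfrac{n}{t^r}=c_P\,\delta^{-r}$, which tends to infinity as $\delta\to0$; a second‑moment estimate confined to the $r$ windows involved then gives $\PP[\ell\text{ good}]\ge c(\delta)$ for a positive constant $c(\delta)$. Hence $\E Y\ge c(\delta)\,t=\Omega(n^{1/r})$.

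The hard part is showing that $Y$ is concentrated. Since ``$\ell$ good'' depends only on the restriction of the matching to the disjoint vertex set $R_\ell:=\bigcup_i W_{i,h_i(\ell)}$, and a uniform random $r$‑matching is, up to a multiplicative $1+O(1/n)$ factor, a product measure over disjoint vertex sets, one expects $\mathrm{Cov}\bigl(\mathds{1}[\ell\text{ good}],\mathds{1}[m\text{ good}]\bigr)=O(1/n)$ for $\ell\ne m$; the cleanest route is to condition on the edges lying inside $R_\ell$ and compare the conditional law in $R_m$ with that of a uniform matching on the complementary vertices, a routine switching estimate. Consequently $\mathrm{Var}(Y)=O(t)+O(t^2/n)=o\bigl((\E Y)^2\bigr)$, and Chebyshev's inequality yields $Y\ge\tfrac12\E Y$ a.a.s., so $z_P(\rm^{(r)}_n)=\Omega(n^{1/r})$ a.a.s., completing the proof. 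Alternatively one can bypass a global correlation bound by exposing the matching on $R_1,\dots,R_t$ in turn and applying a Chernoff bound, since conditionally on the past the probability of the next index being good stays bounded away from $0$.
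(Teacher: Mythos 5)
This theorem is quoted from \cite{JCTB_paper}; the present paper does not reprove it, so your proposal can only be compared with the analogous machinery developed here. Your upper bound is essentially the paper's argument: the rigidity claim (each edge of a $P$-clique meets the $i$-th block of positions in exactly $\lambda_i$ consecutive vertices, whence $a_P(k)=1$) is precisely the $t=0$ case of the cube-counting argument in the proof of the upper bound of Theorem~\ref{fullhouse}, and your first-moment computation is Lemma~\ref{meta} with $C=1$, $x=0$. Your lower-bound construction (zones of sizes $\lambda_i n$, $t=\lfloor\delta n^{1/r}\rfloor$ windows per zone, with the window index reversed in blocks of opposite type) is the same ``spanning edges'' construction as in Lemma~\ref{lemma:span_prob} and Lemma~\ref{thm:ThinMatchingsSize} specialized to a $0$-cube, and your verification that witnesses of distinct good indices form exactly $P$ is correct. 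Where you genuinely diverge is the concentration step: the paper's route is a deletion argument for the expectation followed by Talagrand's inequality (Lemma~\ref{tala}) exploiting the certificate property of a large clique, which is needed because Azuma only concentrates at scale $\sqrt{n}\gg n^{1/r}$ for $r\ge3$; you instead propose Chebyshev with a covariance bound, or sequential exposure with Chernoff. Both of your routes can be made to work, but as written they are the soft spot. For Chebyshev you do not need the claimed $O(1/n)$ covariance --- $o(1)$ suffices since $\E Y=\Theta(t)$ --- and that weaker bound is what the conditioning argument actually delivers cleanly: exposing the edges through $R_\ell$ consumes only $O(n/t^2)=o(|R_m|)$ vertices of $R_m$ with high probability, after which one must still show that the no-spanning-edge probability is stable under such perturbations of the window sizes and ambient vertex set. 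For the martingale route, the assertion that the conditional probability of the next index being good ``stays bounded away from $0$'' is not deterministic: earlier exposures can in principle eat a window, so you need to control the number of consumed vertices along the way. Neither issue is fatal, but each requires an explicit estimate that your sketch omits; the paper's Talagrand-plus-certificate approach is what avoids this bookkeeping.
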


Let us mention that for $r=2$ the above result had been proved, even in a sharper form, already by Baik and Rains \cite{BaikRains} for $P=ABBA$ and $P=ABAB$, and by Justicz, Scheinerman, and Winkler \cite{JSW} for $P=AABB$.

The only other case that has been investigated so far was $r=2$ and $|\cP|=2$.
It is natural to expect that the value of $z_{\cP}(\rm^{(r)}_{n})$ depends on the size $|\cP|$. However, as we will soon see, the situation is far more intricate. %there is much more going on here.
There are three possible pairs of $2$-patterns:  $\cP_1=\{AABB,ABAB\}$, $\cP_2=\{AABB, ABBA\}$ and $\cR=\{ABAB,ABBA\}$. It turns out that typically the largest size of an $\cR$-clique  differs significantly from the largest sizes of $\cP_i$-cliques, $i=1,2$.
\begin{thm}[\cite{DGR-socks}]\label{thm:random_two_patterns_r=2}
	Let $\cP_1$, $\cP_2$, and $\cR$ be as defined above. Then, a.a.s.,
	\[z_{\cP_i}(\rm^{(2)}_{n})=\Theta(n^{1/2}),\] for $i=1,2$, while
	\[z_{\cR}(\rm^{(2)}_{n})\sim \frac n2.\]
\end{thm}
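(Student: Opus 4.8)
The plan is to treat the three sets separately, but with a common engine. For the pairs $\cP_i$ ($i=1,2$), the lower bounds $z_{\cP_i}(\rm^{(2)}_n)=\Omega(n^{1/2})$ follow immediately from monotonicity and Theorem~\ref{thm:random_one_pattern}, since each $\cP_i$ contains a single collectable pattern ($AABB$, respectively) giving a clique of size $\Theta(n^{1/2})$. For the matching upper bounds $z_{\cP_i}(\rm^{(2)}_n)=O(n^{1/2})$, the key observation is that a $\cP_1$-clique is exactly a set of edges that is ``nested-free'' in the $ABBA$ sense — no edge may be contained in the arc of another — equivalently, ordering edges by their left endpoints, the right endpoints must be increasing except that equal-interleaving $AABB$ is also allowed; similarly a $\cP_2$-clique forbids the crossing pattern $ABAB$. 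In either case I would extract from a $\cP_i$-clique of size $m$ a monotone structure (a chain or antichain in a suitable partial order on the edges, via Dilworth/Erd\H os--Szekeres), reducing to the one-pattern case where the $\Theta(n^{1/2})$ bound of Theorem~\ref{thm:random_one_pattern} (or the classical results of Baik--Rains and Justicz--Scheinerman--Winkler) applies. The first moment / longest-increasing-subsequence machinery already developed for Theorem~\ref{thm:random_one_pattern} should transfer with only cosmetic changes, so I do not expect this part to be the obstacle.

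The heart of the theorem is the $\cR$-clique statement, $z_{\cR}(\rm^{(2)}_n)\sim n/2$. Recall $\cR=\cR^{(2)}=\{ABAB,ABBA\}$ is the set of $2$-partite patterns, so an $\cR$-clique is precisely a sub-matching that is $2$-partite, i.e.\ one can split the $2m$ vertices it spans into a left block and a right block with every edge having one vertex in each. The plan is: \emph{(upper bound)} For any ordered $2$-matching $M$ on $[2n]$ and any threshold $t\in[2n]$, the edges entirely contained in $[1,t]$ together with those entirely contained in $(t,2n]$ cannot belong to a common $2$-partite sub-matching unless they straddle the cut; counting shows a $2$-partite sub-matching using cut point $t$ has size at most $\min(\text{(edges meeting }[1,t]),\ \dots)$, and optimizing over $t$, the largest $2$-partite sub-matching has size at most the maximum number of edges that cross a single point, which is trivially at most $n$ but one needs $\le (1/2+o(1))n$ a.a.s. \emph{(lower bound)} Take $t=n$ (or optimize near the median); the edges that straddle the cut $\{[1,n],(n,2n]\}$ form a $2$-partite sub-matching automatically, and the number of such straddling edges in $\rm^{(2)}_n$ is a hypergeometric-type random variable concentrated around $n/2$. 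More precisely, conditioning on which vertices lie left and which lie right is not needed — the cut is fixed — and a straightforward second-moment or Azuma argument on the number of ``crossing'' edges gives concentration at $n/2$.

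So the real content is showing that one \emph{cannot do asymptotically better than $n/2$}, i.e.\ the upper bound $z_{\cR}(\rm^{(2)}_n)\le(1/2+o(1))n$ a.a.s. This is where I expect the main obstacle. A $2$-partite sub-matching need not respect the median cut: its left block and right block are determined by the sub-matching itself, so a priori a clever choice of roughly $2m$ vertices split as $m{+}m$ could have all $m$ edges crossing. I would bound the expected number of $2$-partite sub-matchings of size $m$: choose the $m$ left-vertices and $m$ right-vertices interleaved in a $2$-partite fashion among $[2n]$ — there are at most $\binom{2n}{2m}\binom{2m}{m}$ ways to pick and split the vertex set — then the probability that a \emph{fixed} $2m$-subset split into two specified blocks is realized as an actual sub-matching of $\rm^{(2)}_n$ with all edges crossing the internal cut is roughly $m!\big/\big(\text{number of matchings on those }2m\text{ vertices}\big)$ times a factor accounting for the remaining $n-m$ edges on $[2n]\setminus$(these vertices); a short computation with $|\cM^{(r)}_n|=\frac{(2n)!}{2^n n!}$ should show this expectation is $o(1)$ once $m\ge(1/2+\varepsilon)n$, because forcing $m$ specified edges to each have exactly one endpoint on each side of a balanced cut is exponentially costly when $m$ is close to $n$. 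The delicate point is handling the union bound over all choices of the internal cut location within the sub-matching and over all vertex subsets simultaneously; I would control this by first fixing the size $\ell$ of the left block among the first $\ell+(\text{something})$ vertices and summing a manifestly geometrically-decaying series. If the union bound turns out to be too lossy near $m=n/2$, the fallback is to prove a structural lemma: any $2$-partite sub-matching of size $m$ in a worst case forces an ``imbalance deficiency'' at the true median of $\rm^{(2)}_n$, and then invoke concentration of that median statistic. Either way, the one-pattern result (Theorem~\ref{thm:random_one_pattern}) is not strong enough by itself here — it only gives $\Theta(n^{1/2})$ for each of $ABAB$ and $ABBA$ individually — so the gain from $\Theta(n^{1/2})$ to $\Theta(n)$ genuinely comes from the freedom to mix the two $2$-partite patterns, and that is exactly the phenomenon the proof must capture.
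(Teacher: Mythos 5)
Your treatment of the upper bound for $\cP_1$ and $\cP_2$ has a genuine gap. You propose to extract, via Dilworth/Erd\H{o}s--Szekeres, a monotone single-pattern structure from a $\cP_i$-clique and then invoke the $O(n^{1/2})$ bound of Theorem~\ref{thm:random_one_pattern}. But that reduction loses a square root: a $\cP_1$-clique of size $m$ is only guaranteed to contain an $AABB$-clique or an $ABAB$-clique of size about $\sqrt m$ (and this is tight --- take $k$ mutually non-crossing groups of $k$ mutually crossing edges, e.g.\ $(AB)^k(CD)^k\cdots$, a $\cP_1$-clique of size $k^2$ whose largest single-pattern sub-cliques have size $k$). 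So the single-pattern bounds only yield $\sqrt m=O(n^{1/2})$, i.e.\ $m=O(n)$, not $m=O(n^{1/2})$. The paper instead runs the first moment method directly on $\cP_i$-cliques, and the step you cannot skip is the enumeration: the number of $\cP_1$-cliques (nesting-free matchings) and of $\cP_2$-cliques (crossing-free matchings) on $2k$ given vertices is the Catalan number $C_k\le 4^k$ --- merely exponential in $k$ --- which the paper establishes via reconstructibility from traces (Observation~\ref{recon} plus the left-hand/right-hand rules). Plugging $a_{\cP}(k)\le 4^k$ into Lemma~\ref{meta} with $x=0$ gives $O(n^{1/2})$. Your remark that the ``first moment machinery should transfer with cosmetic changes'' does not identify this counting fact, and it is exactly where the dichotomy lies: for the harmonious pair $\cR$ the corresponding count is $k!$, which is why the answer there jumps to linear order.

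On the $\cR$ part your outline is essentially sound but overcomplicated. Once you observe (as you do) that the vertex set of any $2$-partite sub-matching splits into two intervals, every such sub-matching is contained in the set of edges of $\rm^{(2)}_n$ crossing a single cut point $t$, and conversely the set of all edges crossing a fixed $t$ is itself $2$-partite; hence $z_{\cR}(\rm^{(2)}_n)=\max_{t}X_t$ exactly, where $X_t$ counts crossing edges. There is no need for a union bound over all $2m$-element vertex subsets: one computes $\E X_t=t(2n-t)/(2n-1)\le n^2/(2n-1)\sim n/2$ for every fixed $t$, applies the Azuma--Hoeffding inequality for random permutations (Lemma~\ref{azuma}, with Lipschitz constant $2$), and takes a union bound over the $2n$ cut points; the lower bound comes from the cut at $t=n$, where the expectation is asymptotically attained. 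This is precisely the proof of Theorem~\ref{thm:random_r-partite} specialized to $r=2$ (the paper itself defers the $\cR$ statement to \cite{DGR-socks} and \cite{Scheinerman1988}). Your fallback computation forcing ``$m$ specified edges to cross a balanced cut'' is not needed and would require care to carry out correctly near $m=n/2$.
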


The first formula was only mentioned without proof in the Final Remarks section of \cite{DGR-socks}; therefore we include a simple proof here (see Remark~\ref{r2} below). The second formula follows from Theorem 1.2 in \cite{DGR-socks} and  was also obtained  (much) earlier by Scheinerman \cite{Scheinerman1988}, via a different approach.

The reason behind such a diverse behavior of  random variables $z_{\cP}(\rm^{(2)}_{n})$ with the same size of $\cP$ is structural. Indeed, the pairs of patterns appearing in $\cP_1$ and $\cP_2$ are not harmonious, unlike the patterns in $\cR$. In fact, $\cR=\cR^{(2)}$, which leads to a reformulation of the second statement in Theorem~\ref{thm:random_two_patterns_r=2}:  \emph{The maximum size of a $2$-partite sub-matching in $\rm^{(2)}_{n}$ is asymptotically equal to $n/2$}. This reformulation is legitimate, because, more generally, it is easy to see that for every $r\ge2$, a matching is $r$-partite if and only if it is an $\cR^{(r)}$-clique.

 Theorems~\ref{thm:random_one_pattern} (for $r=2$) and~\ref{thm:random_two_patterns_r=2} can be stated in terms of graphs defined on randomly selected intervals.
 Indeed, any ordered matching $M$ of size $n$ can be interpreted as a set of $n$ intervals (identified with the edges of $M$) with no common endpoints. For a subset $\cP\subset\cP^{(2)}$ we may define a graph $G(M,\cP)$ on vertex set $V(G)=M$, where for any $e,f\in M$, we have $\{e,f\}\in E(G)$ if $e$ and $f$ form a pattern belonging to $\cP$. The six non-trivial subsets of $\cP^{(2)}$ give rise to six well studied types of graphs, among them interval graphs (for $\cP=\cR$). For instance, Theorem~\ref{thm:random_two_patterns_r=2}, with $\cP=\cR$, estimates  the clique number of a random interval graph of size $n$ defined by $\rm^{(2)}_{n}$, which was the actual motivation for  Scheinerman to study random matchings in \cite{Scheinerman1988}.

\section{New results}
Theorems~\ref{thm:random_one_pattern} and~\ref{thm:random_two_patterns_r=2} quoted in Introduction give us a  full picture of the order of magnitude of $z_{\cP}(\rm^{(2)}_{n})$, the size of the largest $\cP$-clique in $\rm^{(2)}_{n}$, for $r=2$.
For $r\ge3$ our grasp on $z_{\mathcal P}(\rm_n)$ is still far from complete. In this section we present  our new results.

\subsection{Generalizations of Theorem~\ref{thm:random_two_patterns_r=2}}

One case in which we are able to pinpoint the value of $z_{\mathcal P}(\rm^{(r)}_n)$ quite precisely is when $\cP$ consists of \emph{all} $r$-partite $r$-patterns, that is, when $\cP =\cR^{(r)}$. Generalizing the second part of Theorem~\ref{thm:random_two_patterns_r=2}, we show that the largest $\cR^{(r)}$-clique in $\rm^{(r)}_{n}$ has size linear in $n$ and find the value of the multiplicative constant asymptotically.

\begin{thm}\label{thm:random_r-partite}
Let $r\ge2$, and let $\cR^{(r)}$ be the set of all $r$-partite $r$-patterns. Then, a.a.s., \[z_{\cR^{(r)}}(\rm^{(r)}_{n})\sim\frac{(r-1)!}{r^{r-1}}\cdot n.\]
\end{thm}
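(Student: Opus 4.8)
The plan is to prove matching upper and lower bounds of the form $\tfrac{(r-1)!}{r^{r-1}}n(1+o(1))$. Throughout recall that a sub-matching is an $\cR^{(r)}$-clique if and only if it is $r$-partite, i.e.\ after relabelling by first occurrence its word splits into $r$ consecutive blocks each containing every letter exactly once; equivalently, an $r$-partite sub-matching of $\rm^{(r)}_n$ of size $k$ is obtained by choosing $r$ disjoint increasing sequences of $k$ vertices, one from each ``phase'', that are consistently matched. The key structural observation is a \emph{greedy/block decomposition} of $[rn]$: any $r$-partite sub-matching induces a partition of (a subset of) the ground set into $r$ intervals, the $i$-th interval carrying the $i$-th vertex of each selected edge. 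This reduces the problem to understanding, in a uniformly random matching viewed as a random word $W$ over an $n$-letter alphabet with each letter appearing $r$ times, how long a subword can be partitioned into $r$ consecutive blocks each being a permutation of the \emph{same} subset of letters.

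For the \textbf{lower bound} I would use a greedy/first-moment-plus-concentration argument. Scan the random word from left to right. Think of it as an occupancy process: split $[rn]$ into $r$ consecutive segments of length $n$ (the natural ``phases''); the first occurrence of each letter lands in some segment, the second occurrence in a later position, etc. The fraction of letters whose $r$ occurrences land one-per-phase (in a slightly relaxed sense, allowing a constant number of exceptional phase boundaries chosen optimally) is governed by a multinomial/balls-in-bins computation whose leading term is exactly $\tfrac{(r-1)!}{r^{r-1}}$: this is the probability that $r$ uniform points in $[0,1]$, one constrained to each of $r$ equal subintervals after sorting, are already sorted — more precisely $r!/r^r = (r-1)!/r^{r-1}$ after accounting for the ordering constraint. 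To make this rigorous I would define an auxiliary random variable counting edges that are ``block-respecting'' with respect to a near-optimal choice of $r-1$ cut points, show its expectation is $\sim \tfrac{(r-1)!}{r^{r-1}}n$, and obtain concentration either by a bounded-differences (Azuma–McDiarmid) argument on the sequence of edge placements, or by a second-moment computation exploiting the near-independence of distinct edges in the random matching (negative correlation helps). One then extracts an actual $r$-partite sub-clique of this size, possibly discarding an $o(n)$ set of conflicting edges.

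For the \textbf{upper bound} I would bound the number of $r$-partite sub-matchings of size $k = (1+\varepsilon)\tfrac{(r-1)!}{r^{r-1}}n$ and show the expected number is $o(1)$ (first moment / union bound). An $r$-partite sub-matching of size $k$ is determined by a choice of $r$ cut points $0 = c_0 \le c_1 \le \dots \le c_r = rn$ and, within the induced block structure, a set of $k$ letters each of whose $r$ occurrences falls one-per-block. For fixed cuts, the probability that a given set of $k$ letters is block-respecting, times $\binom{n}{k}$, summed over the polynomially-many ($O(n^{r-1})$) choices of cuts, must be shown to decay. The per-configuration probability is, up to lower-order corrections, $\big(\tfrac{(r-1)!}{r^{r-1}}\big)^{k}$ times multinomial coefficients for how the $rk$ chosen vertices distribute among the $r$ blocks, and the entropy cost $\binom{n}{k}$ is beaten precisely when $k/n$ exceeds the threshold constant. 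I would make this precise by writing the exact counting formula for $|\cM^{(r)}_n|$ and for the number of matchings containing a \emph{fixed} $r$-partite sub-matching, then taking logarithms and optimizing over the block sizes via a Lagrange/convexity argument; the optimum is the balanced split, yielding the stated constant.

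The \textbf{main obstacle} I anticipate is matching the constant exactly on both sides. The upper bound's union bound is over $O(n^{r-1})$ cut choices, which is harmless, but one must verify that the exponential rate as a function of the relative block sizes is genuinely maximized (not merely bounded) at the balanced split $(n/r,\dots,n/r)$ — this is a constrained optimization of a sum of terms like $x_i \log x_i$ that needs care, and a naive bound gives only $\Theta(n)$ with the wrong constant. Symmetrically, the lower bound must produce a sub-clique whose size has the \emph{same} leading constant, which forces the greedy argument to be essentially tight: I would handle this by allowing the $r-1$ cut points to be chosen \emph{after} seeing the word (optimally), so that the count of block-respecting edges is a maximum over $O(n^{r-1})$ near-optimal placements rather than a single fixed one, and then showing that this maximum concentrates around $\tfrac{(r-1)!}{r^{r-1}}n$ via the same multinomial computation that drives the upper bound. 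Getting the two computations to meet — one an expectation of a maximum, the other a maximum of expectations — is the crux, and I would reconcile them through a sufficiently strong concentration inequality (e.g.\ Talagrand or a martingale bound with $O(1)$ Lipschitz constant per edge) so that the union bound over cut points costs only a $\log n$ factor in the exponent, which is absorbed.
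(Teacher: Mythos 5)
Your proposal is correct and follows essentially the same route as the paper: decompose according to the $O(n^{r-1})$ choices of $r$ consecutive blocks, compute the expected number of edges spanning a fixed block partition (maximized at the balanced split, by AM--GM rather than your Lagrange argument), and combine Azuma--Hoeffding concentration with a union bound over the cuts. The one simplification you miss is that for a fixed block partition $\mathbf{B}$ the set of \emph{all} spanning edges is automatically an $\cR^{(r)}$-clique, so $z_{\cR^{(r)}}(\rm^{(r)}_{n})=\max_{\mathbf{B}}X_{\mathbf{B}}$ exactly; hence no extraction or discarding of conflicting edges is needed for the lower bound, and the upper bound requires only concentration of each $X_{\mathbf{B}}$ around its mean rather than a first-moment count over size-$k$ sub-matchings.
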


As noted before, any $r$-partite sub-matching of $\cM_n^{(r)}$ is an $\cR^{(r)}$-clique, hence the same asymptotic formula holds for the maximum size of such a sub-matching in $\rm^{(r)}_{n}$. The following statement is a convenient reformulation of Theorem~\ref{thm:random_r-partite}.

\begin{cor}\label{Mpart} Let $r\ge2$, and let $R_n^{(r)}$ be the largest (lexicographically first) $r$-partite sub-matching of $\rm^{(r)}_{n}$. Then, a.a.s.,
\[
|R_n^{(r)}|\sim\frac{(r-1)!}{r^{r-1}} \cdot n.
\]
\end{cor}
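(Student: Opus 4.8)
The plan is to think of an $r$-partite sub-matching as a system of $r$ disjoint *increasing* subsequences, one in each of the $r$ blocks of size $n$, that are ``aligned'' — i.e. the $k$-th smallest element chosen in block $1$, the element of block $2$ belonging to the same edge, \dots, the element of block $r$ belonging to the same edge all form one edge, and these edges are pairwise $r$-partite (which, by the characterisation recalled in the excerpt, is automatic once all $r$ chosen vertices of each edge lie one in each block). So the real combinatorial object is: a set of edges $e$ such that for each pair $e,e'$ the block-$1$ endpoints and the block-$j$ endpoints are consistently ordered or anti-ordered — but since we only need *some* $r$-partite clique and any sub-matching of an $r$-partite matching is itself $r$-partite, it suffices to extract a *monotone* structure. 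The clean way: the block-$j$ restriction of $\rm^{(r)}_n$, for $j=2,\dots,r$, induces a uniformly random permutation $\pi_j$ of $[n]$ relative to block $1$ (more precisely, the random matching is equivalent to choosing, for each of $n$ edges, one vertex in each block uniformly subject to the $n!$-to-one identification; conditioning appropriately, the ``profile'' is given by $r-1$ independent uniformly random permutations $\pi_2,\dots,\pi_r$ of $[n]$). An $r$-partite sub-matching of size $m$ corresponds exactly to a common monotone (increasing-or-decreasing in each coordinate, but we may as well take increasing) subsequence: a set $S\subseteq[n]$, $|S|=m$, on which all of $\pi_2,\dots,\pi_r$ are increasing. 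Thus $|R_n^{(r)}|$ is the length of the longest subsequence on which $r-1$ independent random permutations are simultaneously increasing. So I would first prove this reduction carefully (the uniform-matching-to-permutations dictionary, via the word representation), and then cite/prove the asymptotics of the longest simultaneously-increasing subsequence.

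The second ingredient is the asymptotic length of the longest common increasing subsequence of $r-1$ independent uniform random permutations. The expected number of increasing subsequences of a fixed length $m$ in one random permutation is $\binom{n}{m}/m! = \binom nm \cdot\frac1{m!}$; for $r-1$ independent permutations it is $\binom nm \big(\frac1{m!}\big)^{r-1}$. Setting this first-moment quantity to $\Theta(1)$: $\binom nm \approx n^m/m!$ so the count is $\approx n^m/(m!)^{r}$, which by Stirling is $\exp\!\big(m\ln n - r\,m\ln m + rm + o(m)\big)$; this is $1$ when $\ln n \approx r\ln m - r$, i.e. $m \approx e\, n^{1/r}$. Hmm — that gives $n^{1/r}$, not linear in $n$. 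Let me recount: for a sub-matching of size $m$ we are choosing $m$ edges out of $n$ and requiring them to be aligned, but an $r$-partite matching of *size $n$* already has all $n$ edges aligned by a specific pattern; the point is that a *random* matching's profile permutations are not the identity. Wait — re-examining: the correct count of $r$-partite sub-matchings of size $m$ inside $\rm^{(r)}_n$: choose which $m$ of the $n$ edges, that's $\binom nm$; the chosen edges automatically have one vertex per block; the constraint for $r$-partiteness of the *pair* is only that... — actually by the characterisation \emph{every} set of edges each meeting every block in one vertex is already an $\cR^{(r)}$-clique? No: $r$-partite means $\chi_<=r$, i.e. the $rm$ vertices split into $r$ intervals each hit once per edge; that is far more restrictive than each edge hitting each of the \emph{original} blocks once. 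So I must be more careful: I would set up the first-moment computation directly for $r$-partite sub-matchings of $\rm^{(r)}_n$ of size $m=cn$, showing $\E[\#] \to 0$ for $c>\frac{(r-1)!}{r^{r-1}}$ and $\to\infty$ for $c<\frac{(r-1)!}{r^{r-1}}$; the constant $\frac{(r-1)!}{r^{r-1}}$ should emerge from a Stirling/entropy optimisation over the interval-partition profile. The upper bound (union bound / first moment $\to 0$) then gives the a.a.s. upper bound on $|R_n^{(r)}|$.

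For the matching lower bound — the genuinely hard part — first moment alone is not enough; I would use the second moment method, or better, a constructive/martingale argument. The natural approach: build the $r$-partite sub-matching greedily block by block, or use a concentration argument (Azuma on the edge-exposure martingale, since swapping one edge changes $|R_n^{(r)}|$ by $O(1)$) to show $|R_n^{(r)}|$ is concentrated around its mean, combined with a second-moment computation showing $\E|R_n^{(r)}| \sim \frac{(r-1)!}{r^{r-1}}n$ — i.e. that the variance of the number of $r$-partite sub-matchings of size $(1-\epsilon)\frac{(r-1)!}{r^{r-1}}n$ is small compared to the square of its mean. \textbf{I expect the second-moment variance estimate to be the main obstacle}: one must control the correlation between the indicator that set $S$ is an $r$-partite sub-matching and that $S'$ is, summed over overlapping pairs $S,S'$, and show the dominant contribution comes from near-disjoint pairs so that $\E[X^2]\sim(\E X)^2$. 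This typically requires a delicate analysis of how the interval-partition profiles of $S$ and $S'$ interact on the overlap, and it is exactly the step where the $r$-uniform generalisation is more painful than the $r=2$ case of \cite{DGR-socks}; I would model the argument on the LIS-type second-moment arguments (à la Justicz–Scheinerman–Winkler) but carry the interval-chromatic constraint through the computation. Finally, Corollary~\ref{Mpart} is immediate from Theorem~\ref{thm:random_r-partite} since, as noted in the excerpt, a matching is $r$-partite iff it is an $\cR^{(r)}$-clique, so $|R_n^{(r)}| = z_{\cR^{(r)}}(\rm^{(r)}_n)$ exactly (the ``lexicographically first'' clause only fixes a canonical choice among the largest ones).
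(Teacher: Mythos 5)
Your reduction of the corollary to Theorem~\ref{thm:random_r-partite} is exactly right ($|R_n^{(r)}|=z_{\cR^{(r)}}(\rm^{(r)}_n)$, since a matching is $r$-partite iff it is an $\cR^{(r)}$-clique), but your proposed proof of the theorem itself has a genuine gap, and it stems from a missing structural observation. An $\cR^{(r)}$-clique is itself $r$-partite, so its vertex set splits into $r$ consecutive intervals each met exactly once by every edge; extending these intervals to a partition $\mathbf{B}=(B_1,\dots,B_r)$ of $[rn]$ into consecutive blocks, every edge of the clique spans $B_1,\dots,B_r$. Conversely, the set of \emph{all} edges of $\rm^{(r)}_n$ spanning a fixed such $\mathbf{B}$ is automatically an $\cR^{(r)}$-clique. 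Hence $z_{\cR^{(r)}}(\rm^{(r)}_n)=\max_{\mathbf{B}}X_{\mathbf{B}}$, where $X_{\mathbf{B}}$ counts the spanning edges and the maximum ranges over only $O(n^{r-1})$ partitions. Each $X_{\mathbf{B}}$ is a sum of indicators with $\E X_{\mathbf{B}}=\prod_i|B_i|\big/\binom{rn-1}{r-1}\le n^r\big/\binom{rn-1}{r-1}\sim\frac{(r-1)!}{r^{r-1}}n$ by the AM--GM inequality (with asymptotic equality for the equipartition), and it changes by at most $2$ under a transposition in the permutational scheme, so Azuma--Hoeffding plus a union bound over the polynomially many $\mathbf{B}$ yields both the upper and the lower bound at once.

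This collapses precisely the two steps you flag as hard. Your upper bound does not require an entropy optimisation over size-$m$ sub-matchings: for each candidate partition one simply takes \emph{all} spanning edges, so there is no choice of ``which $m$ edges'' to account for. And your lower bound does not need a second moment at all: the equipartition alone gives a single Lipschitz random variable concentrated at $\frac{(r-1)!}{r^{r-1}}n$. As written, your proposal leaves the lower bound unproven --- you yourself identify the second-moment variance estimate as the main obstacle and do not carry it out --- and the upper-bound optimisation is only asserted. Finally, your opening reduction to common increasing subsequences of $r-1$ independent permutations is, as you notice midway, not a correct model of $r$-partite sub-matchings (the $r$ intervals attached to a sub-matching are arbitrary consecutive intervals of $[rn]$, not the original blocks of size $n$), so the $n^{1/r}$ answer it produces is a symptom of the wrong reduction rather than a computational slip; it is best deleted rather than repaired.
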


Another class of patterns for which we can determine the order of magnitude of $z_{\mathcal P}(\rm^{(r)}_{n})$ for all $r\ge2$ is that of \emph{all} sets of $r$-patterns $\mathcal{P}$ with $|\cP|=2$. Generalizing the first part of Theorem~\ref{thm:random_two_patterns_r=2}, we are able to prove the following result.
  We call a pair of  $r$-patterns $\{P,Q\}$ a \emph{mismatch} if one of them is collectable, while the other is not, or both are collectable, but then not harmonious. For instance, for $r=3$, the pairs $P_0=AABABB, P_1=AAABBB$ and $P_1=AAABBB, P_2=AABBBA$ are both mismatches. When $\{P,Q\}$ is a mismatch, we will also say that $P$ and $Q$ are \emph{mismatched}.

\begin{thm}\label{thm:random_two_patterns_gen}
For $r\ge2$, let $\cP=\{P,Q\}$ be a pair of distinct $r$-patterns. Then, a.a.s.,
\[
z_{\mathcal P}(\rm^{(r)}_{n})=\begin{cases}\Theta(n^{1/(r-1)})&\text{if $P$ and $Q$ are harmonious,}\\
	\Theta(n^{1/r})&\text{if $P$ and $Q$ are mismatched,}\\
z&\text{if neither $P$ nor $Q$ is collectable,}
\end{cases}
\]
where $z=z_{\cP}$ is an integer, $2\le z\le 5$.
\end{thm}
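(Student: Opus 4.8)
The plan is to treat the three cases separately, noting that the first two are ``growing'' regimes and the third is a ``bounded'' regime, and each requires a genuinely different argument.

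For the \textbf{harmonious case}, $\cP=\{P,Q\}$ with $\lambda_P=\lambda_Q=\lambda=(\lambda_1,\dots,\lambda_s)$. Since a $\cP$-clique allows each pair of edges to realize either $P$ or $Q$, and $P,Q$ differ only in the orientation of their blocks (each block of $P$ has the shape $A^{\lambda_i}B^{\lambda_i}$ or $B^{\lambda_i}A^{\lambda_i}$, and $Q$ flips at least one), I would first observe that a $\cP$-clique is exactly a matching whose edges, after aligning the block structure dictated by $\lambda$, are ``$\lambda$-partite-like'': the $rn$ vertices split into $2s$ consecutive intervals, interval $2i-1$ and $2i$ together receiving $\lambda_i$ vertices from each edge, but within that pair the $\lambda_i$ vertices of a given edge lie all in one interval or the other (with orientation free to vary per pair only in the way $P$ versus $Q$ permits). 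The cleanest route is to reduce to the $r$-partite case $\lambda=(1,\dots,1)$ already handled in Theorem~\ref{thm:random_r-partite}: actually harmoniousness with $|\cP|=2$ forces a structure in which one can ``contract'' the $\lambda_i$-blocks and see that the largest $\cP$-clique corresponds, up to constants, to the largest $r'$-partite sub-structure for an effective $r'$ related to $s$. More concretely, I expect the upper bound $O(n^{1/(r-1)})$ to come from a first-moment computation: the number of potential $\cP$-cliques of size $k$ on a fixed vertex set is at most (number of ways to choose the $rk$ vertices) times (number of $\cP$-clique structures on them), and the probability that a fixed set of $k$ edges forms a $\cP$-clique in $\rm_n^{(r)}$ decays like $n^{-(r-1)\binom{k}{2}+O(k)}$ — the exponent $r-1$ rather than $r$ arising because harmonious patterns impose one fewer ``independent order constraint'' per pair than a single pattern does. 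Setting the expected count to $o(1)$ gives $k=O(n^{1/(r-1)})$. For the matching lower bound I would exhibit a $\cP$-clique greedily: because $P$ and $Q$ are harmonious, the two orientations give enough flexibility that a random matching a.a.s.\ contains $\Omega(n^{1/(r-1)})$ edges that are mutually ``$\lambda$-aligned'', which one builds block by block using a second-moment / Janson-type concentration argument analogous to the one in \cite{JCTB_paper}.

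For the \textbf{mismatch case}, the key point is monotonicity: if $P$ and $Q$ are mismatched then at most one of them, say $P$, is collectable, and any $\cP$-clique of size $\ge 3$ in which some pair realizes $Q$ is obstructed — since a non-collectable pattern cannot appear in a clique of size three, essentially all pairs in a large $\cP$-clique must realize $P$, or the clique decomposes into two parts (a $P$-part and a $Q$-part) each of bounded size. Thus $z_{\cP}(\rm_n^{(r)})$ is sandwiched between $z_{\{P\}}(\rm_n^{(r)})$ and $z_{\{P\}}(\rm_n^{(r)})+O(1)$ when exactly one is collectable; when both are collectable but not harmonious, one argues that a $\cP$-clique of size $\ge 3$ forces all pairs to share the same composition, hence all realize the \emph{same} pattern, reducing again to the single-pattern case. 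Either way, Theorem~\ref{thm:random_one_pattern} gives $\Theta(n^{1/r})$. The only real content here is the combinatorial lemma that a large $\cP$-clique cannot ``mix'' a collectable and a mismatched pattern — this is a finite case-check on triples of edges, extending \cite[Proposition 2.1]{JCTB_paper}.

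For the \textbf{non-collectable case} (neither $P$ nor $Q$ collectable), the bound $2\le z\le 5$ with $z$ a deterministic integer is purely structural and does not involve randomness at all: since neither pattern is collectable, by \cite[Proposition 2.1]{JCTB_paper} neither admits a $\{P\}$-clique nor a $\{Q\}$-clique of size $3$, so any $\cP$-clique of size $\ge 3$ must genuinely mix the two patterns. I would enumerate, for each unordered pair of non-collectable $r$-patterns, the maximum $k$ such that \emph{some} $k$-edge ordered matching has all $\binom k2$ pairs in $\{P,Q\}$; the claim is that this $k$ is always between $2$ and $5$. The upper bound $5$ follows from a Ramsey-flavoured pigeonhole: in a would-be $6$-clique, among any three edges the pair-types force local configurations that, because both $P$ and $Q$ are ``twisted'' (their block structure is broken), cannot be extended consistently — one shows a forbidden sub-configuration of at most six edges must appear, ruling out $k=6$. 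The lower bound $z\ge2$ is trivial (two edges). That $z$ is a fixed integer (not random) holds because once $n$ is large enough to embed the extremal $\le 5$-edge configuration — which a.a.s.\ happens, by a trivial first-moment/second-moment argument since constant-size configurations appear $\Theta(n^{\text{something positive}})$ times in $\rm_n^{(r)}$ — the value $z_{\cP}(\rm_n^{(r)})$ equals exactly that extremal $k$.

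\textbf{Main obstacle.} The hardest part is the harmonious case: getting the exponent to be exactly $r-1$ (and not $r$ or something in between) requires correctly accounting for how much order-freedom two harmonious patterns jointly allow, and the lower-bound construction needs a careful concentration argument to show a random $r$-matching really does contain an aligned clique of the full size $\Omega(n^{1/(r-1)})$ — the naive greedy argument loses a logarithmic factor, so a second-moment computation over a well-chosen family of candidate cliques (as in \cite{JCTB_paper,DGR-socks}) is needed. The bounded case, by contrast, is just a finite but possibly tedious enumeration of pattern pairs for small $r$ together with a uniform structural argument for general $r$; the mismatch case is the easiest, following almost immediately from monotonicity and the collectability characterization.
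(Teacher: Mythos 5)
Your overall architecture (three separate regimes, Ramsey bound for the non-collectable case, first moment for the upper bounds) matches the paper, and your non-collectable case is essentially the paper's argument: colour the pairs of a would-be clique by their pattern, invoke the fact that non-collectable patterns admit no clique of size three to forbid monochromatic triangles, conclude $z<R_2(3)=6$, and then embed the extremal constant-size configuration a.a.s.\ (the paper does this via a spanning lemma, Proposition~\ref{genH}). However, there are two genuine problems elsewhere.

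The mismatch case as you propose it is wrong. You claim that a large $\{P,Q\}$-clique for a mismatched pair must ``essentially'' be a $P$-clique, or that all pairs are forced to realize the same pattern, so that $z_{\cP}$ is sandwiched between $z_{\{P\}}$ and $z_{\{P\}}+O(1)$. This fails already for $r=2$ with $\cP_1=\{AABB,ABAB\}$ (a mismatch, since $\lambda_{AABB}=(2)\ne(1,1)=\lambda_{ABAB}$): $\cP_1$-cliques are exactly the nesting-free matchings, there are Catalan-many of them on $2k$ fixed vertices, and one can build a $\cP_1$-clique of size $k$ whose largest $P$-clique for either single pattern has size only $\sqrt k$ (take $\sqrt k$ pairwise separated groups of $\sqrt k$ pairwise crossing edges). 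So large mismatched cliques genuinely mix both patterns, and no reduction to Theorem~\ref{thm:random_one_pattern} for the upper bound is available. What the paper actually proves is that $a_{\cP}(k)$ is merely \emph{exponential} in $k$ (rather than $1$), which still suffices in the first-moment lemma to give $O(n^{1/r})$; this is done by showing $\{P,Q\}$-cliques are reconstructible from their traces, by induction on $r$ using the key combinatorial fact (Observation~\ref{obs:1}) that one can delete a letter position $j$ from both patterns so that $\{P^{-j},Q^{-j}\}$ remains a mismatch. Your proposal is missing this entire mechanism, and the step it substitutes for it is false.

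The harmonious case is directionally right but incomplete at the two decisive points. For the upper bound, the exponent $1/(r-1)$ comes from the exact count $a_{\{P,Q\}}(k)=k!$ of harmonious-pair cliques on a fixed $rk$-vertex set (every such clique is a $\sigma$-concatenation $K_0\overset{\sigma}{-}K_1$ of two fixed sub-cliques, one permutation's worth of freedom); your heuristic ``one fewer independent order constraint per pair'' does not by itself produce the factor $k!$, and getting $x=1$ rather than $x=0$ or $x=2$ in the first-moment bound is exactly where the proof lives. For the lower bound you only gesture at ``a second-moment/Janson argument''; the paper's construction is a specific blow-up of an auxiliary product hypergraph with parameters $k=n^{1/(r-1)}$ and $\ell=n^{1-1/(r-1)}$, a deletion-method count of ``good'' versus ``non-separated'' edges to get $\E X=\Omega(k)$, and Talagrand's inequality (Azuma is too weak at this scale since $\E X\ll\sqrt n$). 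As written, your harmonious lower bound is a plan to have a plan rather than a proof.
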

\noindent It is worth noticing that the thresholds for $z_{\mathcal P}(\rm^{(r)}_{n})$ in the above theorem do not depend on $\lambda_P$ or $\lambda_Q$ but only on $r$ and the relation between $P$ and $Q$. As we will demonstrate in the proof, the constant $z$ appearing in the last statement equals the largest size of a $\{P,Q\}$-clique whatsoever.

\medskip

An application of Theorem~\ref{thm:random_two_patterns_gen} (the mismatch case) to families of axis-parallel random rectangles obtained from $\rm^{(4)}_{n}$ has been described in~\cite{DGPR-LATIN} (see Section 1.2 and the comment after Theorem 4 therein).

%Moreover, in the first case the constant under $\Theta(1)$ can be computed precisely as $\max\limits_{M\in \cM_n^{(r)}} z_{\cP}(M)$.

\subsection{Boolean cubes}\label{bool}

The most challenging part of Theorem~\ref{thm:random_two_patterns_gen} concerns harmonious pairs. The approach we are using in that case, however, is applicable in a far more general setting of
arbitrarily large yet highly symmetric sets of patterns.
%It turns out that the proof techniques behind Theorem~\ref{thm:random_two_patterns_gen} also apply to some larger but in some sense symmetric sets $\cP$.
In order to describe them, let us consider a collectable pattern $P$
with $\lambda_P=(\lambda_1,\dots,\lambda_s)$ and its splitting $P=S_1\cdots S_s$ into blocks of sizes $|S_i|=2\lambda_i$. Next, let a partition $\pi$ of the form $[s]=T_0\cup T_1\cup\cdots \cup T_t$ be given, with $1\in T_0$ and no $T_j=\emptyset$.
 For $j=0,\dots,t$, set $P_j$ for the concatenation of segments $S_i$, with $i\in T_j$. Let us call these $P_j$'s the \emph{mega-blocks} of~$P$.

 A \emph{flip} of a mega-block involves exchanging all letters $A$ for $B$ and vice versa, $B$ for $A$, in this mega-block. For a subset $F\subseteq \{1,\dots,t\}$, let $P_F$ be the pattern obtained from $P$ by flipping all mega-blocks $P_j$ with $j\in F$. Clearly, the patterns $P$ and $P_F$ are harmonious.
 For instance, for  $P$ as in \eqref{P} and $\pi$ given by $[8]=\{1,3,5\}\cup\{2,8\}\cup \{4,6,7\}$, the pattern $Q=AB|\color{blue}{AABB}\color{black}{|BA|AAABBB|BBAA|BA|AB|}\color{blue}BA$ is obtained by flipping the mega-block $P_1=S_2S_8$, so we have $Q=P_{\{1\}}$.

 \begin{dfn*}
 Any set of $r$-patterns of the form
 $$\cC(P,\pi)=\{P_F:\;\;F\subseteq[t]\}$$
 is called a \emph{$t$-cube} or just \emph{cube} if the dimension is not specified.
 \end{dfn*}

  To explain this terminology note that $\cC(P,\pi)$  corresponds one-to-one to the Boolean cube $\{A,B\}^t$, where the letter on the $j$-th position means that the $j$-th mega-block begins with that letter. For practical purposes we distinguish one pattern ($P$) as a sort of creator of the cube, though, due to symmetry, every pattern of the cube could play that role.
 Clearly, $|\cC(P,\pi)|=2^t$.

\begin{exmp}\label{12} The following set of four 12-patterns
\begin{eqnarray}
P_{\emptyset}=AB|BBAA|BA|AAABBB|BBAA|BA|AB|AB \label{1}\\
P_{\{1\}}=AB|\color{blue}{AABB}\color{black}{|BA|AAABBB|BBAA|BA|AB|}\color{blue}{BA} \label{2}\\
P_{\{2\}}=AB|BBAA|BA|\color{blue}{BBBAAA}\color{black}{|BBAA|}\color{blue}{AB}\color{black}{|}\color{blue}{BA}\color{black}|AB \label{3}\\
P_{\{1,2\}}=AB|\color{blue}{AABB}\color{black}{|BA|}\color{blue}{BBBAAA}\color{black}{|BBAA|}\color{blue}{AB}\color{black}{|}\color{blue}{BA}
\color{black}{|}\color{blue}{BA} \label{4}
\end{eqnarray}
forms a 2-cube $\cC(P,\pi)$ generated by the pattern $P=P_{\emptyset}$, defined also in \eqref{P}, and the partition $\pi$ given by $[8]=\{1,3,5\}\cup\{2,8\}\cup \{4,6,7\}$.
 \end{exmp}
Notice that for any collectable pattern $P$, the set $\cP(\lambda_P)$ of all patterns harmonious with $P$ is an $(s-1)$-cube, where $s$ is the length of composition $\lambda_P$. Indeed, $\cP(\lambda_P)=\cC(P,\pi)$ with  $\pi$ being  the partition of $[s]$ into singletons. At the other extreme,  observe that a 0-cube is just the singleton set $\{P\}$, while  if $P$ and $Q$ form a harmonious pair, then they form a 1-cube ${\mathcal P}_P(\pi)$, where $\pi$ is the partition of $[s]$ into just two sets, $T_0$ and $T_1$, with $T_1$ corresponding to the flip transforming $P$ into $Q$. For example, if $P=ABBAABAB$ and $Q=ABBABABA$, then $s=4$, $T_0=\{1,2\}$,  $T_1=\{3,4\}$, and so $\{P,Q\}=\cC(P,\pi)$, where $\pi$ is $[4]=T_0\cup T_1$.

The main result of this paper determines the order of magnitude of the size of the largest $\cC(P,\pi)$-clique in $\rm^{(r)}_{n}$ for any cube $\cC(P,\pi)$.
\begin{theorem}\label{fullhouse} Let $r\ge2$, let $P$ be a collectable $r$-pattern with a composition $\lambda$ having $s$ components, and let $\pi$ be a partition of $[s]$ into $t+1$ nonempty parts. Then, a.a.s., $$z_{\cC(P,\pi)}(\rm^{(r)}_{n})=\Theta(n^{1/(r-t)}).$$	
\end{theorem}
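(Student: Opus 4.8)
The plan is to prove matching upper and lower bounds of order $n^{1/(r-t)}$, treating the two directions separately and building on the structural language developed above.

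\textbf{Upper bound.} The key observation is that a $\cC(P,\pi)$-clique is far more constrained than it first appears: within a fixed cube, the mega-block structure forces a rigid "layered" shape on any clique. Concretely, I would first reduce to the $r$-partite heart of the problem. Let $M$ be a $\cC(P,\pi)$-clique of size $m$ in $\rm^{(r)}_n$. Because all patterns in the cube share the same composition $\lambda=(\lambda_1,\dots,\lambda_s)$, the $rm$ vertices of $M$ split into $s$ consecutive \emph{super-blocks}, where the $i$-th super-block contains exactly $\lambda_i$ vertices of each edge; moreover the cube structure says that within each mega-block $P_j$ the relative order of two edges is determined by a single bit (which edge "starts" the mega-block), and these bits are free only across the $t$ non-base mega-blocks. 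Contracting each mega-block $P_j$ (for $j\ge 1$) to a single coordinate and keeping the base mega-block $P_0$ fixed, a $\cC(P,\pi)$-clique of size $m$ yields an \emph{$(t+1)$-partite-like} structure: essentially an ordered $t$-matching-type object of size $m$ on roughly the right number of vertices, but more usefully, it contains an honest $\cR^{(t+1)}$-clique-flavored core. The cleanest route is: a $\cC(P,\pi)$-clique of size $m$ in a matching on $[rn]$ projects, by collapsing each mega-block to one representative vertex per edge, to a $(t)$-partite-type sub-structure which, by a counting/first-moment argument identical in spirit to the proof of Theorem~\ref{thm:random_one_pattern}, cannot have size $\omega(n^{1/(r-t)})$ a.a.s. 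More precisely, I expect the first-moment computation to go as follows: the expected number of $\cC(P,\pi)$-cliques of size $m$ is at most
\[
\binom{n}{m}\cdot (\text{number of cube-compatible orderings})\cdot \frac{(r!)^m\,(rn-rm)!/(r!)^{n-m}(n-m)!}{(rn)!/(r!)^n n!},
\]
and the crucial point is that "cube-compatible" orderings of $m$ edges number only about $m!^{\,r-t}$ up to lower-order factors (the $t$ flip-bits contribute merely $2^t$ per \emph{pair}, i.e.\ essentially nothing on the exponential scale, while the remaining $r-t$ "rigid" coordinates each contribute an arbitrary permutation $m!$). Plugging in and using $(rn-rm)!/(rn)!\approx (rn)^{-rm}$ and Stirling, the expectation is roughly $\big(n^{r-t}/m^{\,r(r-t)}\cdot m^{\,(r-t)}\big)^{m}$-type, which is $o(1)$ once $m\gg n^{1/(r-t)}$. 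I would then conclude by Markov's inequality.

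\textbf{Lower bound.} Here I would adapt the second-moment / absorption-style construction used for collectable single patterns (Theorem~\ref{thm:random_one_pattern}) and for $\cR^{(r)}$ (Theorem~\ref{thm:random_r-partite}). The point of the cube is that the $t$ free flip-bits \emph{relax} the constraints, so cliques are easier to find than for a single pattern — this is exactly what lowers the exponent from $1/r$ to $1/(r-t)$. The construction: reveal $\rm^{(r)}_n$ as a uniform random word, and greedily (or via an auxiliary random subset argument) extract edges whose $r-t$ "rigid" coordinates behave like a monotone/prescribed sub-pattern while the $t$ "flexible" mega-blocks are allowed to fall either way. Formally, partition $[rn]$ into the $s$ super-blocks dictated by $\lambda$; inside the rigid super-blocks one needs a common refinement of the order (a genuine $P$-clique-type condition on $r-t$ coordinates, achievable at size $\Theta(n^{1/(r-t)})$ by the single-pattern machinery applied in dimension $r-t$), whereas inside each flexible mega-block \emph{every} pair of the selected edges automatically forms one of the two allowed local patterns, so no further condition is imposed there. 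Thus finding a $\cC(P,\pi)$-clique of size $m$ reduces to finding a $P'$-clique of size $m$ for a \emph{collectable $(r-t)$-pattern} $P'$ (the restriction of $P$ to the rigid coordinates), for which Theorem~\ref{thm:random_one_pattern} gives $m=\Theta(n^{1/(r-t)})$ a.a.s. Some care is needed because the flexible mega-blocks still occupy vertices and interleave with the rigid ones, so one cannot literally quote Theorem~\ref{thm:random_one_pattern} as a black box on $[(r-t)n]$; instead one runs its proof with the flexible super-blocks present but "transparent," which only affects constants.

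\textbf{Main obstacle.} The delicate step is the lower bound, specifically making rigorous the claim that the flexible mega-blocks impose \emph{no} constraint while still correctly accounting for the vertices and randomness they consume. Unlike the $\cR^{(r)}$ case (where \emph{all} coordinates are flexible) or the single-pattern case (where \emph{none} are), here we have a genuine mixture, and the dependency between "which vertices land in the flexible super-blocks" and "the induced order on the rigid super-blocks" must be controlled — presumably by first conditioning on the partition of vertices into super-blocks (which is itself close to deterministic, each super-block having $\sim \lambda_i n$ vertices), and then arguing that the induced order within the rigid super-blocks is still uniform, so the single-pattern argument applies verbatim in dimension $r-t$. I expect this conditioning-and-decoupling argument, together with a second-moment concentration to upgrade "expected large clique" to "a.a.s.\ large clique," to be the technical core, mirroring but generalizing the corresponding step in \cite{JCTB_paper}.
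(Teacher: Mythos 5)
Your overall plan (a first-moment upper bound via counting cube-cliques, plus a constructive lower bound exploiting the free permutations between mega-blocks) is in the same spirit as the paper's proof, but both halves contain a genuine error in the decisive quantitative step, and in both cases it is the same error: you have the roles of the ``rigid'' and ``flexible'' parts inverted.

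For the upper bound, the correct count of $\cC(P,\pi)$-cliques of size $m$ on a fixed ordered vertex set is $(m!)^{t}$, not $m!^{\,r-t}$. The base mega-block $P_0$ contributes a factor of $1$: there is a \emph{unique} $P_0$-clique on its super-blocks, no matter how many coordinates $P_0$ occupies. Each of the $t$ remaining mega-blocks, by contrast, contributes a full factor $m!$, because the pairing between its $m$ sub-edges and those of the base can be an arbitrary permutation $\sigma_j$ --- this is precisely the superexponential freedom you dismiss as ``merely $2^t$ per pair, essentially nothing on the exponential scale''. Plugging $a_{\cC(P,\pi)}(m)=(m!)^{t}\approx m^{mt}$ into the first moment gives the threshold $n^{1/(r-t)}$; your count $m!^{\,r-t}$ would give $n^{1/t}$, and the two agree only when $r=2t$. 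Your displayed expectation does not follow from the count you state.

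For the lower bound, the claim that ``inside each flexible mega-block every pair of the selected edges automatically forms one of the two allowed local patterns'' fails whenever a flexible mega-block $P_j$ has weight $r_j=\sum_{i\in T_j}\lambda_i\ge 2$: two edges each placing $r_j$ vertices in that super-block can locally form any of $\tfrac12\binom{2r_j}{r_j}$ patterns, of which only one (namely $P_j$, which coincides with its own flip as an unordered pattern) is permitted. Your reduction would therefore yield a lower bound of order $n^{1/r_0}$ with $r_0=\sum_{i\in T_0}\lambda_i\le r-t$; for $P=AABBAABB$ with $T_0=\{1\}$, $T_1=\{2\}$ this is $n^{1/2}$, contradicting the true answer $n^{1/3}$, so the reduction proves too much. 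The paper instead forms the product hypergraph $K_0\times\cdots\times K_t$ of the mega-block cliques, blows up each vertex to a set of size $\ell=n^{1-1/(r-t)}$ with $k=n^{1/(r-t)}$, and shows by a first-moment-plus-deletion argument (with Talagrand's inequality for concentration) that $\rm^{(r)}_{n}$ contains $\Omega(k)$ pairwise separated edges of this blow-up; the exponent arises from balancing $k^{t+1}\ell^{r}/n^{r-1}\asymp k$, not from invoking the single-pattern theorem in dimension $r-t$.
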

\noindent Again, note that the threshold depends only on $r$ and $t$ and not on $\lambda$ or $\pi$. In particular,  we get $z_{\cP(\lambda)}(\rm^{(r)}_{n})=\Theta(n^{1/(r-s+1)})$ for \emph{any} composition $\lambda=(\lambda_1,\dots, \lambda_s)$ of $r$ (as then $\pi$ consists of $s=t+1$ singletons).  Notice also that for $s=r$, that is, when $P$ is $r$-partite, we recover a weaker version of Theorem~\ref{thm:random_r-partite}.
 Finally, the case of harmonious pairs in Theorem~\ref{thm:random_two_patterns_gen} is a special instance of Theorem~\ref{fullhouse}.

\medskip

{\bf Example~\ref{12}, continued.}
 Let us look again at Example~\ref{12}. We have there $r=12$, $s=8$, and $t=2$. So,  $z_{\cC(P,\pi)}(\rm^{(12)}_{n})=\Theta(n^{1/10})$, while $z_{\cP(\lambda)}(\rm^{(12)}_{n})=\Theta(n^{1/5})$, not only for $\lambda=(1,2,1,3,2,1,1,1)$, but for any composition with exactly $8$ components summing up to $12$.

\subsection{Variations on the cube}

Interestingly, as we will see, removing any one pattern from a cube causes the size of the largest clique to decrease substantially. Below we state an even more general result. Given a cube $\cC(P,\pi)$, let $\pi'$ be a refinement of $\pi$ in which only the set $T_0$ becomes sub-partitioned, that is, for some $1\le u\le |T_0|-1$,  $\pi'$ is of the form
$$[s]=(T_0'\cup\cdots\cup T_u')\cup T_1\cup\cdots \cup T_t,$$
 where again $1\in T_0'$ and $T_1',\dots, T_u'$ are non-empty. Then, clearly, $\cC(P,\pi)\subseteq \cC(P,\pi')$.

\begin{prop}\label{fullhouse-1} Under the assumptions of Theorem~\ref{fullhouse}, let, in addition, $1\le u\le s-t-1$ and $\pi'$ be as above. Then, a.a.s.,
$$z_{\cC(P,\pi')\setminus  \cC(P,\pi)}(\rm^{(r)}_{n})=O\left(n^{\frac 1{r-u-t+1/u}}\right).$$
\end{prop}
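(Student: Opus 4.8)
The plan is to bound $z_{\cC(P,\pi')\setminus\cC(P,\pi)}(\rm^{(r)}_n)$ from above by a first-moment (union bound) argument, estimating the expected number of cliques of a given size $m$ that use \emph{at least one} pattern from the forbidden difference set $\cC(P,\pi')\setminus\cC(P,\pi)$. The key structural observation is this: a pattern $P_F\in\cC(P,\pi')$ lies in $\cC(P,\pi)$ if and only if its flip-set $F\subseteq\{1,\dots,u+t\}$ (in the finer labelling) is ``constant'' on the sub-parts $T_0',\dots,T_u'$ of $T_0$ — i.e., it either contains all of $\{1,\dots,u\}$ (the labels of $T_1',\dots,T_u'$, relative to the creator $P$ whose mega-block $T_0'$ is never flipped) or… more precisely, $P_F\in\cC(P,\pi)$ precisely when the mega-blocks coming from $T_0',\dots,T_u'$ are all flipped the same way, so that together they behave as the single mega-block $T_0$. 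Thus membership in the difference set forces a nontrivial ``split'' inside what was the $T_0$ mega-block: there exist two segments $S_i, S_j$ originally both in $T_0$ that receive opposite flips. This is the combinatorial mechanism that makes the difference-set cliques rarer than full-cube cliques.

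First I would set up the counting framework exactly as in the proof of the upper bound in Theorem~\ref{fullhouse} (which I am assuming): a $\cC(P,\pi)$-clique of size $m$ corresponds to a choice of $m$ edges together with a consistent assignment of mega-block ``colors'' to blocks of vertices, and the probability that $m$ prescribed disjoint $r$-sets form such a clique is governed by how many vertex-order constraints are imposed. The exponent $1/(r-t)$ in Theorem~\ref{fullhouse} arises because a $t$-cube clique of size $m$ forces the $rm$ vertices to be distributed into roughly $(r-t)$ ``free'' coordinates (the $t$ flippable mega-blocks collapse order information), giving expected count $\approx \binom{n}{\dots}\cdot m^{\,\mathrm{const}}/(rn)!$-type bounds that are $o(1)$ once $m\gg n^{1/(r-t)}$. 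For the difference set, the extra constraint — that \emph{some} pair of edges realizes a pattern in which the $T_0$-mega-block is internally split into $u+1$ independently-flippable pieces — means that along the ``witness'' pair the effective number of free coordinates drops further. Carrying the bookkeeping through, a clique all of whose pairs lie in $\cC(P,\pi')$ but which is \emph{not} a $\cC(P,\pi)$-clique must contain a sub-clique witnessing the finer structure, and the governing exponent becomes $1/(ru-u^2+1-tu)\cdot$(something) — I would verify that the denominator $ru-u^2+1-tu = u(r-t)-u^2+1$ is exactly what comes out of counting: the $T_0$ mega-block splits into $u+1$ parts, only $u$ of which are genuinely new flip-directions, and each such direction, when forced to differ from the baseline across all $m$ edges, removes order-degrees-of-freedom quadratically in $u$ because the $u$ new sub-parts must also be mutually consistent.

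The key steps, in order: \textbf{(1)} Prove the structural lemma characterizing $\cC(P,\pi')\setminus\cC(P,\pi)$ as those $P_F$ for which the restriction of $F$ to the $u+1$ sub-parts of $T_0$ is non-constant; deduce that any clique $M$ with all pairs in $\cC(P,\pi')$ but $M\notin\{\cC(P,\pi)\text{-cliques}\}$ must, for \emph{every} pair of its edges, respect a fixed splitting of the $[rn]$ vertex line into blocks with $u$ genuinely distinct ``new'' mega-block positions inside the old $T_0$. \textbf{(2)} Translate this into a vertex-interval-coloring statement à la the $\chi_<$ machinery used earlier: an $m$-clique of this type forces the $rm$ used vertices into a pattern of blocks whose count is $r - t - (\text{gain from }u\text{ extra splits})$ — here I would be careful to track that the gain is not simply $u$ but is reduced by the internal-consistency cost, yielding the stated exponent. \textbf{(3)} Apply the first-moment method: bound $\EE[\#\text{such }m\text{-cliques in }\rm^{(r)}_n]$ by $n^{m}\cdot(\text{poly in }m)\cdot\big/\big(\text{the order-probability factor}\big)$ and show this is $o(1)$ as soon as $m\ge C n^{u/(ru-u^2+1-tu)}$ for a suitable constant $C$; conclude by Markov's inequality.

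\textbf{Main obstacle.} The hard part will be step (2): correctly identifying the exponent, i.e., computing exactly how many order-constraints a ``difference-set'' clique of size $m$ imposes, and in particular justifying the quadratic-in-$u$ correction $-u^2$ in the denominator $ru-u^2+1-tu$. Unlike the clean cube case, here the structure is asymmetric — the creator $P$'s $T_0'$-piece is pinned while $T_1',\dots,T_u'$ are free — and one must show that requiring all $m$ edges to agree on this finer $(u+1)$-part structure of $T_0$ simultaneously (rather than just pairwise) does not lose, nor gain, additional exponent beyond what the calculation predicts; I expect this to need the same careful ``the constraints across the clique are essentially independent once you fix the block pattern'' argument that underlies the proof of Theorem~\ref{fullhouse}, adapted to the refined partition. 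A secondary technical point is making sure the union bound over all possible witness-configurations (which pair of sub-parts of $T_0$ gets opposite flips, and over all refinements) only costs a polynomial-in-$m$, constant-in-$n$ factor, so that it is absorbed harmlessly.
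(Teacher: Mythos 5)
Your overall skeleton — reduce to counting $\cP$-cliques of size $k$ on a fixed vertex set and feed the count into a first-moment bound (the paper's Lemma~\ref{meta}), using the concatenation-by-permutations structure from the proof of Theorem~\ref{fullhouse} — is the paper's approach. But there are two genuine gaps. First, you repeatedly work with the wrong clique notion: you speak of cliques "that use at least one pattern from the forbidden difference set" and of cliques "with all pairs in $\cC(P,\pi')$ but $M\notin\{\cC(P,\pi)\text{-cliques}\}$". A $\bigl(\cC(P,\pi')\setminus\cC(P,\pi)\bigr)$-clique requires \emph{every} pair of edges to form a pattern in the difference set. Under your weaker reading the statement is false: a generic maximum $\cC(P,\pi')$-clique has size $\Theta(n^{1/(r-u-t)})$ and is not a $\cC(P,\pi)$-clique, and $\tfrac1{r-u-t}>\tfrac u{ru-u^2+1-tu}$, so such cliques would exceed the claimed bound. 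The entire gain over the trivial $(u+t)$-cube bound comes from the requirement that \emph{all} $\binom{k}{2}$ pairs simultaneously avoid $\cC(P,\pi)$.

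Second, the step you flag as the "main obstacle" — justifying the $-u^2+1$ in the denominator — is not a matter of careful bookkeeping of order-constraints or interval colorings, and it will not come out of a degrees-of-freedom argument. Once a difference-set clique of size $k$ is written as $K[U'_0]\overset{\sigma_1'}{-}\cdots\overset{\sigma_u'}{-}K[U_u']\overset{\sigma_1}{-}\cdots\overset{\sigma_t}{-}K[U_t]$, the condition that every pair of edges avoids $\cC(P,\pi)$ says precisely that the $u$-tuple $(\sigma_1',\dots,\sigma_u')$ of permutations of $[k]$ simultaneously avoids a fixed $u$-tuple of length-$2$ patterns $(\tau_1,\dots,\tau_u)$, $\tau_j\in\{12,21\}$ determined by whether $P'_j$ begins with $A$ or $B$. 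The number of such $u$-tuples is at most $c^k\,k^{k(u-1/u)}$; this is the Gunby--P\'alv\"olgyi theorem on parallel pattern avoidance (Theorem~\ref{thm Gunby-Palvolgyi}, a generalization of Marcus--Tardos), equivalently \cite[Theorem 1.13]{AJKS}. Multiplying by $(k!)^t$ for $(\sigma_1,\dots,\sigma_t)$ and applying Lemma~\ref{meta} with $x=u-\tfrac1u+t$ gives the exponent $\tfrac1{r-u+1/u-t}=\tfrac u{ru-u^2+1-tu}$. The fractional term $1/u$, which is exactly the $+1$ in your denominator, is the content of that theorem and cannot be recovered from the kind of "constraints are essentially independent" argument you sketch; without citing it (or reproving it) the proof does not close.
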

\noindent Note that for $t=0$ we get $z_{\cC(P,\pi')\setminus\{P\}}(\rm^{(r)}_{n})=O\left(n^{\frac 1{r-u+1/u}}\right)=o\left(n^{\frac 1{r-u}}\right)$, which means that the quantity for a cube with one pattern removed is of a smaller order of magnitude than that for the cube itself. For instance, for the cube in Example~\ref{12} with pattern $P$ removed this bound becomes $O(n^{2/21})=o(n^{1/10})$.

 When $u=1$, we get $\cC(P,\pi')\setminus  \cC(P,\pi)=\cC(Q,\pi)$, where $Q$ is $P$ with $P_1'$ flipped,
 and $P_1'$ is the second mega-block related to $\pi'$. For instance, if in Example~\ref{12} we change the partition $\pi$ to $\pi'$ given by $[8]=\{1,3\}\cup\{5\}\cup\{2,8\}\cup \{4,6,7\}$, then $\cC(P,\pi')\setminus  \cC(P,\pi)=\cC(Q,\pi)$, where
 $Q=AB|BBAA|BA|AAABBB|\textcolor{red}{AABB}|BA|AB|AB$.
  So, it is no surprise that the exponent $\tfrac 1{r-u-t+1/u}$ becomes $\tfrac1{r-t}$, the same as in Theorem~\ref{fullhouse}.
But for $u\ge2$ we do go beyond the concept of cubes.

 \begin{exmp}\label{12+12} Consider an extension of Example~\ref{12}.
 Let $P$ be the 12-pattern defined in \eqref{P} and consider the partition  $\pi$: $[8]=\{1,3,5\}\cup\{2,8\}\cup \{4,6,7\}$. Now, let us partition $\{1,3,5\}$ into singletons obtaining partition $\pi'$ given by $[8]=\{1\}\cup\{3\}\cup\{5\}\cup\{2,8\}\cup \{4,6,7\}$. Note that $\cC(P,\pi')$
 is a 4-cube of size $2^4=16$ but
 $\cC(P,\pi')\setminus  \cC(P,\pi)$ is not a cube. It consists of $3\cdot 4=12$ patterns obtained from those in \eqref{1}-\eqref{4} by flipping segments $P_1'$ and $P_2'$ (either one of them, or both).

 The upper bound on $z_{\cC(P,\pi')\setminus  \cC(P,\pi)}(\rm^{(r)}_{n})$, by Proposition~\ref{fullhouse-1} with $r=12$ and $t=u=2$, is $O(n^{2/17})$,
 and is smaller than $n^{1/8}$, the order of $z_{\cC(P,\pi')}(\rm^{(r)}_{n})$ deduced from Theorem~\ref{fullhouse} with $r=12$ and $t=4$. At the same time, assuming that the bound determines the actual order of magnitude of $z_{\cC(P,\pi')\setminus  \cC(P,\pi)}(\rm^{(r)}_{n})$, it is much bigger than $n^{1/10}$ -- the order of $z_{\cC(P,\pi)}(\rm^{(r)}_{n})$.
 \end{exmp}

 Another special case of Proposition~\ref{fullhouse-1} is when all sets $T_1,\dots,T_t$ and $T_0',\dots,T_u'$ are singletons. Then  $u=|T_0|-1=s-t-1$ and $\cC(P,\pi')=\cP(\lambda_P)$, the set of all $r$-patterns harmonious to $P$. In this case, $z_{\cP(\lambda_P)\setminus  \cC(P,\pi)}(\rm^{(r)}_{n})=O\left(n^{\frac 1{r-s+1+1/u}}\right)$, which for  $r$-partite patterns, that is, when $\lambda_P=(1,\dots,1)$,  $s=r$ and $u=r-t-1$, becomes
 $z_{\cR^{(r)}\setminus  \cC(P,\pi)}(\rm^{(r)}_{n})=O\left(n^{1-\frac{1}{r-t}}\right)$.

 \begin{exmp}\label{28} Let $P=(AB)^6$ and $T_0=\{1,2,3,4\}$, $T_1=\{5\}$, and $T_2=\{6\}$. Then $\cC(P,\pi)=\{(AB)^4ABAB,(AB)^4ABBA,(AB)^4BAAB,(AB)^4BABA\}$, while with $T_0'=\{1\}$, $T_1'=\{2\}$, $T_2'=\{3\}$, $T_3'=\{4\}$,
 we have $\cC(P,\pi')=\cR^{(6)}$. (So, the set $\cR^{(6)}\setminus\cC(P,\pi)$ has size $2^5-2^2=28$.) By Proposition~\ref{fullhouse-1} with $r=6$, $t=2$, and $u=3$, we infer that $z_{\cR^{(6)}\setminus\cC(P,\pi)}(\rm^{(r)}_{n})=O(n^{3/4})$.
 \end{exmp}

\medskip

At the moment, only for this last case we are able to match the upper bound from Proposition~\ref{fullhouse-1} with a corresponding lower bound.

\begin{prop}\label{thm:random_three_patterns_stronger}
	Let $P\in\cR^{(r)}$ be an $r$-partite $r$-pattern. Let $\pi$ be a partition $[r]=T_0\cup\cdots\cup T_t$ such that $1\in T_0$ and all other parts $T_i$ are singletons. Then, a.a.s.,
\[
z_{\cR^{(r)}\setminus \cC(P,\pi)}(\rm^{(r)}_{n})=\Theta\left(n^{1-\frac{1}{r-t}}\right).
\]
\end{prop}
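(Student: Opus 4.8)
The plan is to read the upper bound off Proposition~\ref{fullhouse-1} and to obtain the lower bound by recognizing $z_{\cR^{(r)}\setminus\cC(P,\pi)}(\rm^{(r)}_{n})$ as the largest antichain in a random $k$-dimensional point set, where $k:=r-t$, and then appealing to the classical $\Theta(m^{1-1/k})$ estimate for such antichains. For the upper bound I would apply Proposition~\ref{fullhouse-1} with $\pi'$ the partition of $[r]$ into singletons: then $\cC(P,\pi')=\cP(\lambda^{(r)})=\cR^{(r)}$, so $\cC(P,\pi')\setminus\cC(P,\pi)=\cR^{(r)}\setminus\cC(P,\pi)$, and with $s=r$ and $u:=|T_0|-1=r-t-1$ the exponent $\tfrac{u}{ru-u^2+1-tu}$ collapses to $\tfrac{u}{u+1}=1-\tfrac1{r-t}$, giving a.a.s.\ $z_{\cR^{(r)}\setminus\cC(P,\pi)}(\rm^{(r)}_{n})=O(n^{1-1/(r-t)})$. (This needs $u\ge1$, i.e.\ $t\le r-2$; if $t=r-1$ then $T_0=\{1\}$, $\cC(P,\pi)=\cR^{(r)}$, and the asserted identity is the trivial $\Theta(1)=\Theta(1)$. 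So from now on $k=r-t\ge2$.)

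First I would reformulate what a $(\cR^{(r)}\setminus\cC(P,\pi))$-clique is. Since $P_0$ is the only mega-block of $\cC(P,\pi)$ that is never flipped, a pattern $Q\in\cR^{(r)}$ lies in $\cC(P,\pi)$ iff it agrees with $P$ on every block $S_i$ with $i\in T_0$; writing each $r$-partite $r$-pattern as a sign vector in $\{+\}\times\{+,-\}^{r-1}$ (with $+$ denoting an $AB$-block), this is the condition $\sigma^Q|_{T_0}=\sigma^P|_{T_0}$. Representing an $r$-partite matching by the ranks $x_i(e)$ of the vertices of its edges within the blocks, a pair $e,f$ with $x_1(e)<x_1(f)$ has its $i$-th block equal to $AB$ exactly when $x_i(e)<x_i(f)$. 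Reversing the internal order of a block $S_i$ with $i\ge 2$ is a bijection of $r$-partite matchings that preserves the uniform distribution and flips the $i$-th coordinate of every induced pattern, so I may assume $\sigma^P_i=+$ for all $i\in T_0$. Then the pair $e,f$ induces a pattern in $\cC(P,\pi)$ precisely when the projections of $e$ and $f$ to the $k=|T_0|$ coordinates indexed by $T_0$ are comparable in the coordinatewise (product) order. Consequently a $(\cR^{(r)}\setminus\cC(P,\pi))$-clique is exactly an $r$-partite sub-matching whose edges have pairwise incomparable $T_0$-projections, and $z_{\cR^{(r)}\setminus\cC(P,\pi)}(\rm^{(r)}_{n})$ equals the largest antichain, in the $k$-dimensional product order, among the $T_0$-projections of the $r$-partite edges of $\rm^{(r)}_{n}$.

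Next I would isolate the relevant randomness. Let $R$ be the set of all $r$-partite edges of $\rm^{(r)}_{n}$; as a sub-collection of a matching it is an $r$-partite sub-matching, and it is the largest one (any $r$-partite sub-matching uses only $r$-partite edges, all of which lie in $R$), so by Corollary~\ref{Mpart}, a.a.s.\ $m:=|R|\sim\frac{(r-1)!}{r^{r-1}}n$, in particular $m=\Theta(n)$. Conditionally on $V(R)$, the matching $R$ is a uniformly random $r$-partite perfect matching of $V(R)$: for a candidate vertex set $S$, every edge of $\rm^{(r)}_{n}$ lies entirely inside $S$ or entirely outside it (a vertex of $S=V(R)$ is covered by an $r$-partite edge, which then lies in $S$), the events ``$\rm^{(r)}_{n}|_S$ is $r$-partite'' and ``$\rm^{(r)}_{n}|_{[rn]\setminus S}$ has no $r$-partite edge'' involve disjoint, independent, uniform parts of $\rm^{(r)}_{n}$, and conditioning on their intersection keeps $\rm^{(r)}_{n}|_S$ uniform over $r$-partite matchings of $S$. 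Encoding a uniform $r$-partite matching of size $m$ by $r-1$ independent uniform permutations of $[m]$ (the bijections from block $1$ to the remaining blocks), the $T_0$-projections of its edges become the $m$ points $p_j=(j,\tau_2(j),\dots,\tau_k(j))$, $j\in[m]$, where $\tau_2,\dots,\tau_k$ are $k-1$ independent uniform permutations of $[m]$.

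Finally, the antichain count. A chain of length $\ell$ among the $p_j$ is a set $j_1<\dots<j_\ell$ on which all of $\tau_2,\dots,\tau_k$ increase, so the expected number of length-$\ell$ chains is $\binom m\ell(\ell!)^{-(k-1)}\le m^\ell(\ell!)^{-k}\le\bigl(e^k m/\ell^k\bigr)^\ell$, which is $o(1)$ once $\ell\ge 3e\,m^{1/k}$; by Markov's inequality the longest chain among the $p_j$ has length $L=O(m^{1/k})$ a.a.s. By Mirsky's theorem the $m$ points then split into $L$ antichains, one of which has size $\ge m/L=\Omega(m^{1-1/k})$, and its preimage in $R$ is a $(\cR^{(r)}\setminus\cC(P,\pi))$-clique. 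Unwinding the conditioning (a.a.s.\ $V(R)$ gives $m=\Theta(n)\to\infty$, and then conditionally a.a.s.\ the clique has size $\Omega(m^{1-1/k})$) yields $z_{\cR^{(r)}\setminus\cC(P,\pi)}(\rm^{(r)}_{n})=\Omega(n^{1-1/(r-t)})$ a.a.s., matching the upper bound. I expect the main obstacle to be the reformulation in the second paragraph — in particular, checking that membership in the cube depends on a pair of edges only through their $T_0$-coordinates and, after the harmless block reversals, is exactly coordinatewise comparability, which is what turns the problem into the classical one about random antichains; the conditional-uniformity step needs care but is routine, and since Proposition~\ref{fullhouse-1} already supplies the upper bound, only the easy (lower) half of the antichain estimate is actually needed here.
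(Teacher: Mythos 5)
Your argument is correct, and its skeleton coincides with the paper's: the upper bound is read off Proposition~\ref{fullhouse-1} with $\pi'$ the all-singleton refinement (exactly the special case the paper records right before the statement), and the lower bound combines a linear-size $r$-partite sub-matching, the product order on the $T_0$-coordinates (the paper's poset~\eqref{po3}, which reverses the $T_0^{BA}$-coordinates instead of performing your measure-preserving block reversals), a chain bound of $O(n^{1/(r-t)})$, and the Dilworth/Mirsky step (Lemmas~\ref{DL} and~\ref{template}). The one genuine divergence is how the chain bound is obtained. The paper observes that a chain in this poset is a $\cC(P,\pi)$-clique of the \emph{ambient} matching $\rm^{(r)}_{n}$ and simply quotes the already-proven upper bound of Theorem~\ref{fullhouse}; by monotonicity this needs no information about the conditional law of the $r$-partite sub-matching, so the entire conditional-uniformity discussion disappears. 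You instead re-derive the chain bound inside a conditional model (uniform $r$-partite matching $=$ independent permutations) via a direct first-moment count of monotone $\ell$-subsets; this is self-contained and correct, but it is where all the delicacy of your write-up sits, and it is avoidable. One claim along the way is wrong as stated, though harmless: "$r$-partite" is a property of a sub-matching relative to an interval partition, not of a single edge, so the set $R$ of edges spanning a \emph{fixed} equipartition need not be the largest $r$-partite sub-matching (another $r$-partite sub-matching may realize its partiteness via a different interval partition), and the parenthetical justification fails. You only need $|R|=\Theta(n)$ a.a.s., which for the equipartition $\mathbf{B^*}$ follows from~\eqref{B*} and the concentration step in the proof of Theorem~\ref{thm:random_r-partite}, rather than from Corollary~\ref{Mpart} as literally cited.
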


Taking $t=0$, that is, $T_0=[r]$, as mentioned earlier, we exclude just the sole pattern $P$ from~$\cR^{(r)}$.  With a little help from our friends - the authors of \cite{AJKS} - we are able to  strengthen Proposition~\ref{thm:random_three_patterns_stronger} in this special case as follows.

\begin{cor}\label{friends}
For $r\ge3$, let $P\in\cR^{(r)}$ be an $r$-partite $r$-pattern and let $\cP$ be a family of $r$-patterns such that
$$\cR^{(r)}\setminus\{P\}\subseteq \cP\subseteq \cP^{(r)}\setminus\{P\}.$$
Then, a.a.s., $z_{\cP}(\rm^{(r)}_{n})=\Theta\left(n^{1-\frac{1}{r}}\right).$
\end{cor}

In particular, for $r=3$ and  any set $\cP$  which includes precisely three patterns from $\cR^{(3)}=\{P_6,P_7,P_8,P_9\}$,  a.a.s., $z_{\cP}(\rm^{(3)}_{n})=\Theta\left(n^{2/3}\right).$
We believe that when $|\cP\cap \cR^{(3)}|=q$, $q=1,2$, then, a.a.s., $z_{\cP}(\rm^{(3)}_{n})=\Theta\left(n^{1/(3-q+1)}\right).$ One instance of the case $q=1$ is presented in the next subsection.

\subsection{Dyck patterns}\label{sec_dyck}
   %We believe that for all sets of collectable 3-patterns containing at most two patterns from $\cR^{(3)}$ the threshold should be lower than that.
%Sometimes even much lower, like for the four sets listed in our next result. Note that each of them contains, besides the non-collectable $P_0$, four patterns no pair of which is harmonious.

%\begin{prop}\label{thm:random_three_patterns_other}
%Let $\mathcal{P}\subseteq \cP^{(3)}$ be of the form $\cP=\{P_0,P_1\}\cup \cQ$, where $\cQ$ is any one of the four sets: $\{P_3,P_5,P_9\}$, $\{P_2,P_4,P_6\}$, $\{P_2,P_5,P_8\}$, $\{P_3,P_4,P_7\}$.
% Then, a.a.s., $$z_{\mathcal P}(\rm^{(3)}_{n})=\Theta(n^{1/3}).$$
%\end{prop}

A \emph{Dyck word} is a binary word in which every prefix has at least as many letters $A$ as $B$. A pattern $P$ is called a \emph{Dyck pattern} if its representing word is a Dyck word.
For $r\ge2$, let $\cP^{(r)}_{Dyck}$ be the family of all Dyck $r$-patterns.
It is well known (cf. \cite{StanleyCatalan}) that $|\cP^{(r)}_{Dyck}|=C_r$, where $C_r=\tfrac1{r+1}\binom{2r}r$ is the Catalan number.
For instance,
$$ \mathcal{P}^{(3)}_{Dyck}=\{AABABB,AAABBB,AABBAB,ABAABB,ABABAB\}=\{P_0,P_1,P_3,P_5,P_9\}.$$
Note that every two patterns in $\mathcal{P}^{(3)}_{Dyck}$ are mismatched.

%Focusing on the first of the sets, all five patterns in there, $P_0=AABABB$, $P_1=AAABBB$, $P_3=AABBAB$, $P_5=ABAABB$, and $P_9=ABABAB$ are Dyck words, that is, any prefix has at least as many $A$' as $B$'s. This property is vital for the proof and, moreover, admits a natural generalization.

\begin{prop}\label{1-over-r}
For $r\ge2$, let $\cP:=\cP^{(r)}_{Dyck}$ be the family of all Dyck patterns in $\cP^{(r)}$. Then, a.a.s., $z_{\mathcal P}(\rm^{(r)}_{n})=\Theta(n^{1/r}).$
\end{prop}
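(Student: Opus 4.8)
The plan is to prove the lower and upper bounds separately, exploiting the fact that $\cP^{(r)}_{Dyck}$ is a large but still structured family of collectable patterns. For the \emph{lower bound}, I would first observe that the ``all $A$'s before all $B$'s'' pattern $P_1=A^rB^r$ is itself a Dyck word and is collectable with composition $\lambda_{P_1}=(r)$ (a single block). Hence $\{P_1\}\subseteq \cP^{(r)}_{Dyck}$, and by monotonicity $z_{\cP^{(r)}_{Dyck}}(\rm^{(r)}_n)\ge z_{P_1}(\rm^{(r)}_n)=\Theta(n^{1/r})$ by Theorem~\ref{thm:random_one_pattern}. This gives the lower bound essentially for free, so the entire content of the proposition is the matching \emph{upper bound} $z_{\cP^{(r)}_{Dyck}}(\rm^{(r)}_n)=O(n^{1/r})$ a.a.s.

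For the upper bound, the key structural observation is: if $M=\{e_1,\dots,e_m\}$ is a $\cP^{(r)}_{Dyck}$-clique, then for every pair of edges $e_i,e_j$, the two edges form a Dyck word, meaning that reading left to right, at no point have we seen more ``$B$-type'' vertices than ``$A$-type'' vertices when we orient each edge so its first vertex is the ``$A$''. I would exploit the following: label, for each edge $e$, its vertices in increasing order as $v_1(e)<\dots<v_r(e)$. The Dyck condition on every pair forces a strong coherence among the edges' vertex positions — intuitively, the ``$k$-th vertices'' of all edges must appear in a block-like arrangement, block $k$ preceding block $k+1$ in a nested way. More precisely, I expect one can show that a $\cP^{(r)}_{Dyck}$-clique $M$ of size $m$ has the property that the first vertices $v_1(e_1),\dots,v_1(e_m)$, the second vertices $v_2(e_1),\dots,v_2(e_m)$, etc., are ``almost separated'' into $r$ consecutive intervals, and within each interval the relative order of the edges is constrained. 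This should let me encode any such clique in $\rm^{(r)}_n$ by a bounded amount of data (essentially $r$ choices of increasing sequences), so that a first-moment / union-bound computation — counting the expected number of $\cP^{(r)}_{Dyck}$-cliques of size $m=C n^{1/r}$ in $\rm^{(r)}_n$ and showing it tends to $0$ for $C$ large — closes the argument. This is the same philosophy used for single patterns in \cite{JCTB_paper}: a $P$-clique of size $m$ forces roughly $m^r/r!$ forbidden sub-configurations' worth of ``rigidity,'' and the probability that a random matching admits one is super-exponentially small once $m\gg n^{1/r}$.

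The main obstacle, and the step requiring real care, is making the structural claim precise: \emph{a $\cP^{(r)}_{Dyck}$-clique of size $m$ cannot be ``too spread out,'' i.e.\ it occupies a vertex pattern rigid enough that there are at most $n^{O(1)}\cdot(\text{poly in }m)$ ways to place it, each occurring with probability at most $(r!)^m m! /(rm)!\cdot(\text{correction})$ in $\rm^{(r)}_n$.} The difficulty is that $\cP^{(r)}_{Dyck}$ is much larger than a single pattern (its size grows with $r$), so one cannot simply union-bound over patterns; instead I would argue directly from the Dyck property. Concretely, I would fix the clique $M$, consider the ``merge word'' of all its edges (the word in $[m]$-letters recording the left-to-right order of all $rm$ vertices), and show that the pairwise-Dyck condition forces this merge word to be a Dyck-like word over a larger alphabet — specifically, that it decomposes into $r$ consecutive segments, the $k$-th segment being a permutation-word of $[m]$ whose inversions relative to the first segment are controlled. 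Counting such merge words and bounding, for each, the probability that $\rm^{(r)}_n$ realizes it (which is $\frac{(r!)^m\,(n-m)!\,\cdots}{(rn)!/\cdots}$, uniformly small), then summing over $m\ge Cn^{1/r}$, should yield the $o(1)$ bound. I expect the $r$-partite/Dyck rigidity to be exactly strong enough that the exponent works out to $1/r$ and not smaller; verifying this exponent is the crux, and it mirrors the combinatorics already developed for Theorem~\ref{thm:random_one_pattern}.
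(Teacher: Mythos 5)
Your lower bound is exactly the paper's: $A^rB^r\in\cP^{(r)}_{Dyck}$ is collectable, so Theorem~\ref{thm:random_one_pattern} and monotonicity give $\Omega(n^{1/r})$. For the upper bound you also correctly locate the crux — one must bound the number $a_{\cP}(k)$ of Dyck-cliques of size $k$ by $C^k$ (with no super-exponential factor) and then run the first-moment computation of Lemma~\ref{meta}. The problem is that the structural claim you propose to get this count is false. You assert that the merge word of a $\cP^{(r)}_{Dyck}$-clique ``decomposes into $r$ consecutive segments, the $k$-th segment being a permutation-word of $[m]$,'' and earlier that the $j$-th vertices of all edges lie in (almost) consecutive intervals. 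That describes an $r$-partite clique. But $\cP^{(r)}_{Dyck}$ contains patterns that are very far from $r$-partite: $A^rB^r$ is a Dyck word, and the $\{A^rB^r\}$-clique $A^rB^rC^r\cdots$ is a legitimate $\cP^{(r)}_{Dyck}$-clique whose first vertices are spread at positions $1,r+1,2r+1,\dots$ across the whole word, with no segment containing each letter once. So the rigidity you plan to exploit simply is not there in the form you state, and the encoding/counting step does not go through as written.

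The correct rigidity statement — and the one the paper uses — is reconstructibility from the \emph{trace} (Observation~\ref{recon}): record for each vertex its rank $1,\dots,r$ within its own edge. The point is that a matching is a $\cP^{(r)}_{Dyck}$-clique if and only if it is the left-hand-rule reconstruction of its trace: assigning each digit $i\ge2$ to the alphabetically first letter seen exactly $i-1$ times preserves, for every pair of letters $A$ introduced before $B$, the invariant that every prefix contains at least as many $A$'s as $B$'s (i.e.\ the pair is a Dyck word), while any deviation from the rule creates a prefix violating it. Hence distinct Dyck-cliques have distinct traces, so $a_{\cP}(k)\le r^{rk}$, and Lemma~\ref{meta} with $x=0$ gives $O(n^{1/r})$ a.a.s. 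This is precisely the ``straightforward generalization of the $\cQ_{359}$ argument'' the paper invokes. Your overall first-moment strategy is the right one, but without replacing your (false) $r$-partite decomposition by this trace/left-hand-rule encoding, or some other correct exponential bound on $a_{\cP}(k)$, the proof has a genuine gap at its central step.
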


It means that the order of magnitude of $z_{\mathcal P}(\rm^{(r)}_{n})$ for the entire family of Dyck patterns in $\cP^{(r)}$ is as small as for any single collectable pattern (cf. Theorem~\ref{thm:random_one_pattern}).

\subsection{Organization}

Most proofs, along with essential tools and preliminary observations, are presented in Sections~\ref{section:random-main} and~\ref{comtool}. In particular, Section~\ref{section:random} contains the proof of Theorem~\ref{thm:random_r-partite} and Section~\ref{generic} -- the proofs of upper bounds in Corollary~\ref{friends}, Proposition~\ref{1-over-r}, and  Theorem~\ref{fullhouse}. (Note that, by monotonicity, the lower bounds in Corollary~\ref{friends} and Proposition~\ref{1-over-r} follow, respectively, from Proposition~\ref{thm:random_three_patterns_stronger} and Theorem~\ref{thm:random_one_pattern}.)

In Sections~\ref{pos},~\ref{pap}, and~\ref{trace}, we give the proofs of, respectively, Proposition~\ref{thm:random_three_patterns_stronger} (lower bound), Proposition~\ref{fullhouse-1} (which yields the upper bound in Proposition~\ref{thm:random_three_patterns_stronger}), and the proof of  Theorem~\ref{thm:random_two_patterns_gen} (mismatch case).

Sections~\ref{non-collect} and~\ref{fhlb}, in turn, are devoted to the remaining proofs, that is, proofs of, respectively, Theorem~\ref{thm:random_two_patterns_gen}, the non-collectable case, and Theorem~\ref{fullhouse}, the lower bound. (Recall that the harmonious case of Theorem~\ref{thm:random_two_patterns_gen} follows from Theorem~\ref{fullhouse}.)
We conclude the paper with some open problems and directions of further research.

\medskip

This paper is a full (journal) version of a conference extended abstract \cite{DGPR-LATIN}. Compared with \cite{DGPR-LATIN}, it contains new results, most importantly Theorem~\ref{fullhouse}, but also Propositions~\ref{fullhouse-1},~\ref{thm:random_three_patterns_stronger}, and~\ref{1-over-r}, as well as some proofs which were omitted in \cite{DGPR-LATIN}, and several new figures and examples.

\section{Concatenations and products of matchings}\label{defop}

In this short section we define  operations on matchings which will be used throughout the paper.

 The first operation  ``glues'' together disjoint matchings of equal size, but possibly of distinct dimensions. For $k,r_0,r_1\ge1$, let $V$ be an ordered set of size $|V|=(r_0+r_1)k$. Further, let $M_0=\{e_1,\dots,e_k\}$ be an $r_0$-matching on vertex set $U_0$ and  $M_1=\{f_1,\dots,f_k\}$ be an $r_1$-matching on  vertex set $U_1$, with $U_0\cup U_1\subseteq V$ and $U_0\cap U_1=\emptyset$ (the edges of the matchings are numbered in the order of their left-ends). For a permutation $\sigma:[k]\to[k]$  define the \emph{$\sigma$-concatenation} $M_0\overset{\sigma}{-} M_1$ of $M_0$ and $M_1$ as the $(r_0+r_1)$-matching with the edge set $\{e_1\cup f_{\sigma(1)},\dots, e_k\cup f_{\sigma(k)}\}$. This definition can be extended naturally to $(\sigma_1,\dots,\sigma_h)$-concatenations of $h+1$ matchings
$M_0\overset{\sigma_1}{-} M_1\overset{\sigma_2}{-}\cdots \overset{\sigma_h}{-} M_{h}$.
For instance, having in addition an $r_2$-matching $M_2=\{g_1,\dots,g_k\}$ on vertex set $U_2\subset V$ disjoint from $U_0\cup U_1$,
 $M_0\overset{\sigma_1}{-} M_1\overset{\sigma_2}{-} M_2$
 has the edge set $\{e_1\cup f_{\sigma_1(1)}\cup g_{\sigma_2(1)},\dots, e_k\cup f_{\sigma_1(k)}\cup g_{\sigma_2(k)}\}$.

 Next, define the \emph{product} of $M_0,\dots,M_{h}$ as

 \begin{equation}\label{times}
 M_0\times M_1\times \cdots \times M_{h}=\bigcup_{(\sigma_1,\ldots,\sigma_h)}M_0\overset{\sigma_1}{-} M_1\overset{\sigma_2}{-}\cdots \overset{\sigma_h}{-}M_{h}
 \end{equation}
 where the set union runs over all $h$-tuples of permutations of $[k]$. Equivalently,
$$M_0\times M_1\times \cdots \times M_{h}= \{e^{(0)}\cup e^{(1)}\cup\ldots\cup e^{(h)}:e^{(0)}\in M_0,e^{(1)}\in M_1,\ldots,e^{(h)}\in M_{h}\}.$$
\noindent Note that $M_0\times M_1\times \cdots \times M_{h}$ is a union of $(k!)^h$ matchings, each of size $k$, and
 %from $\cM_k^{(r)}$ whose total number of edges is 
 \begin{equation}\label{edges}
|M_0\times M_1\times \cdots \times M_{h}|=k^h.
\end{equation}

Let us illustrate these operations with a couple of examples.

\begin{exmp}\label{concat}
Let $V=[15]=U_0\cup U_1\cup U_2$, where $U_0=\{1,2,3\},U_1=\{4,\dots,9\}$, and $U_2=\{10,\dots,15\}$, and let $M_0=ABC$, $ M_1=AABBCC$ and $M_2=ABCCBA$ be matchings on sets $U_0,U_1$, and $U_2$, respectively. Further, let $\sigma_1=231$ and $\sigma_2=132$ be permutations of $[3]$. In letter notation, $\sigma_1$ assigns $A\to B$, $B\to C$, and $C\to A$, while $\sigma_2$ assigns $A\to A$, $B\to C$ and $C\to B$. Thus, using fresh letters $D,E,F$,
$$M:=M_0\overset{\sigma_1}{-} M_1\overset{\sigma_2}{-} M_2=DEF|FFDDEE|DFEEFD.$$
 (Here the $D$-edge combines the $A$-edge from $M_0$, the $B$-edge from $M_1$ and the $A$-edge from $M_2$, and the $E$-edge and $F$-edge are defined similarly, based on the two permutations.)

 Note that $M$ is a $(\cC,\pi)$-clique of size three, where $P=AB|AABB|AB|BA$ and $\pi=\{\{1\},\{2\},\{3,4\}\}$. Indeed, letters $D$ and $E$ form pattern $P$, letters $D$ and $F$ form pattern $P_{\{1\}}$, and $E$ and $F$ form pattern $P_{\{1,2\}}$ (in the notation of Section~\ref{bool}).

 Taking another pair of permutations, say $\sigma'_1=213$ and $\sigma'_2=123$, we obtain another $(\cC,\pi)$-clique of size three,
 $$M':=M_0\overset{\sigma_1}{-} M_1\overset{\sigma_2}{-} M_2=DEF|EEDDFF|DEFFED,$$
 which shares the edge $D$, or $\{1,6,7,10,15\}$, with $M$.
\end{exmp}

\begin{exmp}\label{product}
Let $V=[12]=U_0\cup U_1\cup U_2$, where $U_0=\{1,2,3,4\},U_1=\{5,6,7,8\}$, and $U_2=\{9,10,11,12\}$, and let $M_0=AABB$, $M_1=ABBA$, and $M_2=ABAB$ be matchings on sets $U_0,U_1$, and $U_2$, respectively.
To get the product $M_0\times M_1\times M_{2}$ one needs to consider all four pairs of permutations $\sigma_1,\sigma_2\in\{12,21\}$. Computing the concatenation $M_0\overset{\sigma_1}{-} M_1\overset{\sigma_2}{-} M_2$ for each pair, we obtain four 6-matchings on $[12]$:
$CCDD|CDDC|CDCD$, $CCDD|CDDC|DCDC$, $CCDD|DCCD|CDCD$, and $CCDD|DCCD|DCDC$ which together form the 2-cube $\cC(P,\pi)$, where $P=M_0M_1M_2$ and $\pi=\{\{1,2\},\{3,4\},\{5,6\}\}$.

To see the resulting 6-uniform hypergraph, it is better to return to vertices instead of letters. For instance, $CCDD|CDDC|CDCD$ encodes the pair of edges $\{1,2,5,8,9,11\}$ and $\{3,4,6,7,10,12\}$. Thus,
\begin{align*}
M_0\times M_1\times M_{2}=\{& \{1,2,5,8,9,11\}, \{3,4,6,7,10,12\}, \{1,2,5,8,10,12\}, \{3,4,6,7,9,11\},\\&
 \{1,2,6,7,9,11\},\{3,4,5,8,10,12\},\{1,2,6,7,10,12\},\{3,4,5,8,9,11\} \},
\end{align*}
\noindent see Figure 3.1. This time, the four matchings forming $M_0\times M_1\times M_{2}$ are pairwise edge-disjoint, so the hypergraph is 4-uniform. It should be clear by now that after flipping the $A$'s and  the $B$'s around in either $M_1$ or $M_2$ or both, we would end up with the very same hypergraph as the product.
\end{exmp}

\begin{figure}[ht]
	\captionsetup[subfigure]{labelformat=empty}
	\begin{center}		
	
	\scalebox{1}
{
\centering
\begin{tikzpicture}
[line width = .5pt,
vtx/.style={circle,draw,red,very thick,fill=red, line width = 1pt, inner sep=0pt},
]

    \node[vtx] (1) at (0,0) {};
    \node[vtx] (13) at (11.8,0) {};

    \draw[line width=0.3mm, color=black]  (1) -- (13);

    \node[vtx] (1a) at (0+0.5,0) {};
    \node[vtx] (1b) at (1+0.5,0) {};
    \node[vtx] (1c) at (2+0.5,0) {};

    \node[vtx] (2a) at (3+0.5,0) {};
    \node[vtx] (2b) at (4+0.5,0) {};
    \node[vtx] (2c) at (5+0.5,0) {};

    \node[vtx] (3a) at (6+0.5,0) {};
    \node[vtx] (3b) at (7+0.5,0) {};
    \node[vtx] (3c) at (8+0.5,0) {};

    \node[vtx] (4a) at (9+0.5,0) {};
    \node[vtx] (4b) at (10+0.5,0) {};
    \node[vtx] (4c) at (11+0.5,0) {};

    \draw[line width=0.5mm, color=red, outer sep=2mm] (1b) arc (0:180:0.5);
    \draw[line width=0.5mm, color=red, outer sep=2mm] (2a) arc (0:180:0.5);
    \draw[line width=0.5mm, color=red, outer sep=2mm] (3a) arc (0:180:0.5);
    \draw[line width=0.5mm, color=red, outer sep=2mm] (3b) arc (0:180:1.5);
    \draw[line width=0.5mm, color=red, outer sep=2mm] (4c) arc (0:180:1);
    \draw[line width=0.5mm, color=red, outer sep=2mm] (4b) arc (0:180:1);

    \draw[line width=0.5mm, color=blue, outer sep=2mm] (2b) arc (0:180:0.5);
    \draw[line width=0.5mm, color=blue, outer sep=2mm] (2b) arc (0:180:1.5);
    \draw[line width=0.5mm, color=blue, outer sep=2mm] (2c) arc (0:180:1.0);
    \draw[line width=0.5mm, color=blue, outer sep=2mm] (2c) arc (0:180:2);
    \draw[line width=0.5mm, color=blue, outer sep=2mm] (3c) arc (0:180:0.5);
    \draw[line width=0.5mm, color=blue, outer sep=2mm] (3c) arc (0:180:1);
    \draw[line width=0.5mm, color=blue, outer sep=2mm] (4a) arc (0:180:1);
    \draw[line width=0.5mm, color=blue, outer sep=2mm] (4a) arc (0:180:1.5);

    \fill[fill=black, outer sep=1mm]   (1a) circle (0.1);
    \fill[fill=black, outer sep=1mm]   (1b) circle (0.1);
    \fill[fill=black, outer sep=1mm]   (1c) circle (0.1);

    \fill[fill=black, outer sep=1mm]   (2a) circle (0.1);
    \fill[fill=black, outer sep=1mm]   (2b) circle (0.1);
    \fill[fill=black, outer sep=1mm]   (2c) circle (0.1);

    \fill[fill=black, outer sep=1mm]   (3a) circle (0.1);
    \fill[fill=black, outer sep=1mm]   (3b) circle (0.1);
    \fill[fill=black, outer sep=1mm]   (3c) circle (0.1);

    \fill[fill=black, outer sep=1mm]   (4a) circle (0.1);
    \fill[fill=black, outer sep=1mm]   (4b) circle (0.1);
    \fill[fill=black, outer sep=1mm]   (4c) circle (0.1);

    \node[color=black] at (0+0.5,-0.5) {$1$};
    \node[color=black] at (1+0.5,-0.5) {$2$};
    \node[color=black] at (2+0.5,-0.5) {$3$};
    \node[color=black] at (3+0.5,-0.5) {$4$};
    \node[color=black] at (4+0.5,-0.5) {$5$};
    \node[color=black] at (5+0.5,-0.5) {$6$};
    \node[color=black] at (6+0.5,-0.5) {$7$};
    \node[color=black] at (7+0.5,-0.5) {$8$};
    \node[color=black] at (8+0.5,-0.5) {$9$};
    \node[color=black] at (9+0.5,-0.5) {$10$};
    \node[color=black] at (10+0.5,-0.5) {$11$};
    \node[color=black] at (11+0.5,-0.5) {$12$};

\end{tikzpicture}
}
	
	\end{center}
	
	\caption{The product $M_0\times M_1\times M_{2}$ from Example~\ref{product}. Each of its eight $6$-vertex edges is depicted by a red-blue-red-blue-red path. There are four edges containing $\{1,2\}$ and four edges containing $\{3,4\}$.}
	\label{MMM}
	
\end{figure}

\section{Probabilistic tools} In this  section we present a couple of probabilistic tools and apply them right away to prove some of our results.\label{section:random-main}

\subsection{ Concentration inequalities for random matchings}\label{section:random}

Recall that $\rm^{(r)}_{n}$ denotes a random ordered $r$-uniform matching of size $n$, that is, a~matching picked uniformly at random out of the set $\cM_n^{(r)}$ of all
\[
\alpha^{(r)}_n:=\frac{(rn)!}{(r!)^n\, n!}
\]
ordered $r$-matchings on the set $[rn]$. For future reference note that the probability that a given $r$-element subset $e$ of $[rn]$ is an edge of $\rm^{(r)}_{n}$ is
\begin{equation}\label{1edge}
\PP(e\in \rm^{(r)}_{n})=\frac{\alpha^{(r)}_{n-1}}{\alpha^{(r)}_n}=\frac1{\binom{rn-1}{r-1}}.
\end{equation}

There is another, equivalent way of drawing $\rm^{(r)}_{n}$, the \emph{permutational scheme}.
Indeed, let $\pi$ be a permutation of $[rn]$. It can be chopped off into an $r$-matching \[M_\pi:=\left\{\{\pi(1),\dots,\pi(r)\}, \{\pi(r+1),\dots,\pi(2r)\},\dots,\{\pi(rn-r+1),\dots,\pi(rn)\}\right\}\] and, clearly, there are exactly $(r!)^n n!$ permutations $\pi$ yielding the same matching.
This amounts to a basic fact that the probability of choosing a matching with given features while picking it uniformly at random is the same in the case when we pick a random permutation instead, and then chop it into the matchings' edges.

We will benefit from this straightforward observation, as it allows direct applications of
%The permutational scheme allows one to use
high concentration results for random permutations in the context of ordered matchings. By high concentration we mean that a random variable is very close to its expectation with probability very close to 1.

 One of them is the Azuma-Hoeffding inequality for random permutations (see, e.g., Lemma 11 in~\cite{FP} or  Section 3.2 in~\cite{McDiarmid98}).
 By swapping two elements in a permutation $\pi_1$ we mean fixing two indices $i<j$ and creating a new permutation $\pi_2$ with $\pi_2(i)=\pi_1(j)$, $\pi_2(j)=\pi_1(i)$, and $\pi_2(\ell)=\pi_1(\ell)$ for all $\ell\neq i,j$. Let $\Pi_{N}$ denote a permutation selected uniformly at random from all $N!$ permutations of $[N]$.

\begin{lemma}[\cite{FP,McDiarmid98}]\label{azuma}
 Let $h(\pi)$ be a function defined on the set of all permutations of order $N$ which, for some positive constants $c$, satisfies the following Lipschitz-type condition:
 \begin{enumerate}
 \item[(L)] if $\pi_2$ is obtained from $\pi_1$ by swapping two elements, then $|h(\pi_1)-h(\pi_2)|\le c$.
 \end{enumerate}
Then, for every $\eta>0$,
\[
\PP(|h(\Pi_{N})-\E[h(\Pi_{N})]|\ge \eta)\le 2\exp\{-2\eta^2/(c^2N)\}.
\]
\end{lemma}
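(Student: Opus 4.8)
The plan is to realize $h(\Pi_{N})$ as the terminal value of a Doob martingale obtained by exposing the permutation one coordinate at a time, to verify that the martingale increments lie, conditionally, in an interval of length at most $c$, and then to invoke the Azuma--Hoeffding bound in its sharpest (range) form. For $0\le i\le N$ let $\mathcal F_i$ be the $\sigma$-algebra generated by $\bigl(\Pi_{N}(1),\dots,\Pi_{N}(i)\bigr)$ and put $X_i:=\E[h(\Pi_{N})\mid\mathcal F_i]$, so that $X_0=\E[h(\Pi_{N})]$, $X_N=h(\Pi_{N})$, and $(X_i)_{i=0}^N$ is a martingale with $\PP(|h(\Pi_{N})-\E[h(\Pi_{N})]|\ge\eta)=\PP(|X_N-X_0|\ge\eta)$.

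The heart of the argument is to show that, conditioned on any outcome of $\mathcal F_{i-1}$, the random variable $X_i$ is supported on a set of diameter at most $c$. Fix a prefix $(a_1,\dots,a_{i-1})$ of distinct elements of $[N]$ and let $a,b$ be two further distinct elements not among them; it suffices to bound $\bigl|\E[h\mid \Pi_{N}(1)=a_1,\dots,\Pi_{N}(i-1)=a_{i-1},\Pi_{N}(i)=a]-\E[h\mid\dots,\Pi_{N}(i)=b]\bigr|$ by $c$. Both conditional laws are uniform over completions of the fixed prefix, and I would exhibit an explicit bijection between these two families: given a completion with $\Pi_{N}(i)=a$, locate the unique position $j>i$ carrying the value $b$ and swap the entries in positions $i$ and $j$. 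This produces a completion with $\Pi_{N}(i)=b$, the map is a bijection, and each completion differs from its image by a single transposition; hence by hypothesis (L) the two conditional expectations differ by at most $c$. Since $X_{i-1}=\E[X_i\mid\mathcal F_{i-1}]$ lies in the convex hull of the conditional support of $X_i$, the increment $d_i:=X_i-X_{i-1}$ lies, conditionally on $\mathcal F_{i-1}$, in an interval of length at most $c$.

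Finally I would apply the concentration bound in the form that exploits the conditional \emph{range} of the increments rather than merely $|d_i|\le c$. By the conditional Hoeffding lemma, $\E[e^{t d_i}\mid\mathcal F_{i-1}]\le e^{t^2c^2/8}$ for all $t$, and iterating gives $\E\,e^{t(X_N-X_0)}\le e^{t^2 c^2 N/8}$; the Markov bound $\PP(X_N-X_0\ge\eta)\le e^{-t\eta}e^{t^2c^2N/8}$ optimized at $t=4\eta/(c^2N)$ yields $\PP(X_N-X_0\ge\eta)\le\exp\{-2\eta^2/(c^2N)\}$, and the same bound for $X_0-X_N$ by symmetry; a union bound supplies the factor $2$.

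The main obstacle is the middle step. Unlike the classical McDiarmid setting of independent coordinates, the coordinates of a uniform permutation are dependent, so one cannot apply bounded differences coordinatewise; the transposition-coupling bijection is precisely what converts the single-swap Lipschitz hypothesis (L) into a genuine bound on the martingale increments. Squeezing the clean constant $c$ (rather than $2c$) out of this coupling, and then upgrading to the sharp exponent $2\eta^2/(c^2N)$ in place of the cruder $\eta^2/(2c^2N)$, is what forces one to track the interval length of the conditional increments and to use the range version of Hoeffding's lemma rather than the plain absolute bound.
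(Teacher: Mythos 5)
Your proof is correct, and since the paper only quotes this lemma from \cite{FP,McDiarmid98} without proof, the relevant comparison is with those references: your Doob martingale via sequential exposure of $\Pi_N(1),\dots,\Pi_N(i)$, the transposition-coupling bijection that converts hypothesis (L) into a conditional increment range of length $c$, and the range form of Hoeffding's lemma is exactly the standard argument there, and it does deliver the stated constant $2\eta^2/(c^2N)$. No gaps.
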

Lemma~\ref{azuma} yields sharp concentration only when $\E[h(\Pi_{N})]\gg \sqrt N$.
This is not sufficient for the proof of the lower bound in Theorem~\ref{fullhouse} and therefore we will use there
 a Talagrand's concentration inequality from \cite{Talagrand}, which can be applied whenever the expectation tends to infinity. The drawbacks are that it requires another, so called  ``certificate'' condition and that it is expressed in terms of the median rather than the expectation.

 We quote here a slightly simplified version from \cite{LuczakMcDiarmid} (see also \cite{McDiarmid}).

\begin{lemma}[\cite{LuczakMcDiarmid,McDiarmid}]\label{tala}
	Let $h(\pi)$ be a function defined on the set of all permutations of order $N$ which, for some positive constants $c$ and $d$, satisfies
	\begin{enumerate}
		\item[(L)] if $\pi_2$ is obtained from $\pi_1$ by swapping two elements, then $|h(\pi_1)-h(\pi_2)|\le c$;
		\item[(C)] for each $\pi$ and $s>0$, if $h(\pi)=s$, then in order to show that $h(\pi)\ge s$, one needs to specify only at most $ds$ values $\pi(i)$.
	\end{enumerate}
	Then, for every $\eps>0$,
	$$\PP(|h({\Pi}_N)-m|\ge \eps m)\le4 \exp(-\eps^2 m/(32dc^2)),$$
	where $m$ is the median of $h({\Pi}_N)$.
\end{lemma}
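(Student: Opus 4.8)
The statement is essentially a restatement, in the form convenient for our purposes, of results of Talagrand~\cite{Talagrand} and McDiarmid~\cite{McDiarmid} (given for locally acting permutations in~\cite{LuczakMcDiarmid}), so I will only describe how it is obtained and where its content lies. The plan is to deduce it from the permutation analogue of Talagrand's convex-distance inequality. For a set $\cA$ of permutations of $[N]$ and a permutation $\pi$, let
\[
d_T(\pi,\cA)=\sup\Bigl\{\inf_{\sigma\in\cA}\sum_{i\,:\,\pi(i)\ne\sigma(i)}\alpha_i\ :\ \alpha\in[0,\infty)^N,\ \|\alpha\|_2\le1\Bigr\}
\]
be the convex distance from $\pi$ to $\cA$. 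The only genuinely nonelementary ingredient is the bound
\[
\PP(\Pi_N\in\cA)\cdot\PP\bigl(d_T(\Pi_N,\cA)\ge u\bigr)\le\exp\bigl(-u^2/16\bigr),\qquad u>0,
\]
valid for the uniform random permutation $\Pi_N$ and every $\cA$. Since $S_N$ with the uniform measure is not a product space, this is not a formal consequence of Talagrand's product-space theorem; it is precisely the permutation concentration inequality proved in~\cite{Talagrand,McDiarmid,LuczakMcDiarmid}, and I would simply invoke it. This is the step I expect to be the main obstacle if one insisted on a self-contained proof.

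Granting that inequality, the rest is a routine reduction which I would carry out as follows. Fix $b\ge0$ and set $\cA_b:=\{\pi:h(\pi)\le b\}$. First I would show that $h(\pi)=s>b$ implies $d_T(\pi,\cA_b)\ge(s-b)/(c\sqrt{ds})$. By condition~(C) there is $J=J(\pi)\subseteq[N]$ with $|J|\le ds$ such that $\sigma|_J=\pi|_J$ forces $h(\sigma)\ge s$. Using the test vector $\alpha=|J|^{-1/2}\mathbf{1}_J$, and noting that for any $\sigma\in\cA_b$ a greedy argument transforms $\sigma$ into some $\sigma'$ with $\sigma'|_J=\pi|_J$ by at most $k:=|\{i\in J:\sigma(i)\ne\pi(i)\}|$ transpositions, condition~(L) gives $s\le h(\sigma')\le h(\sigma)+ck\le b+ck$, hence $k\ge(s-b)/c$ and $\sum_{i:\pi(i)\ne\sigma(i)}\alpha_i\ge k/\sqrt{|J|}\ge(s-b)/(c\sqrt{ds})$; taking the infimum over $\sigma\in\cA_b$ proves the claim. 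Since $s\mapsto(s-b)/\sqrt s$ is increasing, $\{h(\Pi_N)\ge u\}\subseteq\{d_T(\Pi_N,\cA_b)\ge(u-b)/(c\sqrt{du})\}$ for $u>b$, and the convex-distance inequality yields the one-sided bound
\[
\PP\bigl(h(\Pi_N)\ge u\bigr)\le\frac1{\PP(\cA_b)}\exp\!\left(-\frac{(u-b)^2}{16\,c^2d\,u}\right),\qquad u>b.
\]

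Finally I would read off both tails from this single estimate. For the upper tail take $b=m$; as $m$ is a median we have $\PP(\cA_m)\ge\tfrac12$, and choosing $u=(1+\eps)m$ makes the exponent equal to $\eps^2m/\bigl(16c^2d(1+\eps)\bigr)$, so $\PP\bigl(h(\Pi_N)\ge(1+\eps)m\bigr)\le2\exp\bigl(-\eps^2m/(32c^2d)\bigr)$ for $\eps\le1$ (the range $\eps>1$ being disposed of by applying the estimate at level $2m$, or by a direct argument). For the lower tail I would instead apply the one-sided bound with $b=(1-\eps)m$ and $u=m$, obtaining $\PP\bigl(h(\Pi_N)\ge m\bigr)\le\PP(\cA_{(1-\eps)m})^{-1}\exp\bigl(-\eps^2m/(16c^2d)\bigr)$; since $\PP\bigl(h(\Pi_N)\ge m\bigr)\ge\tfrac12$ by the median property, this rearranges to $\PP\bigl(h(\Pi_N)\le(1-\eps)m\bigr)\le2\exp\bigl(-\eps^2m/(32c^2d)\bigr)$. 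Summing the two tails gives the lemma. Apart from the convex-distance inequality quoted at the start, every step here is elementary: the greedy transposition count, the monotonicity of $(s-b)/\sqrt s$, and the bookkeeping of the absolute constant $32c^2d$. Accordingly, in the final text I would most likely simply cite~\cite{LuczakMcDiarmid,McDiarmid} for the whole statement rather than reproduce this reduction.
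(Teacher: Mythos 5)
The paper does not actually prove this lemma --- it is quoted verbatim from \cite{LuczakMcDiarmid,McDiarmid} --- so there is no internal proof to compare against; your reduction from the permutation convex-distance inequality (certificate set $J$, test vector $\alpha=|J|^{-1/2}\mathbf{1}_J$, greedy transposition count combined with (L) and (C), then the two choices of the level $b$) is precisely the standard derivation in those references, and it is correct. One small caveat: your upper-tail exponent is $\eps^2 m/(16dc^2(1+\eps))$, which matches the claimed $\eps^2 m/(32dc^2)$ only for $\eps\le1$, so the statement ``for every $\eps>0$'' is slightly loose as written --- but this looseness is inherited from how the lemma is usually quoted, and the paper applies it only with $\eps=1/2$, so nothing downstream is affected.
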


We will now employ Lemma~\ref{azuma} to prove Theorem~\ref{thm:random_r-partite}. The second concentration lemma, Lemma~\ref{tala} will be  used in the main proof in Section~\ref{fhlb}.

\begin{proof}[Proof of Theorem~\ref{thm:random_r-partite}]

For given disjoint subsets $A_1,\dots,A_r \subseteq [rn]$ we say that an $r$-edge $e$ \emph{spans} these sets if $|e\cap A_i|=1$ for each $1\le i\le r$ (see Figure\ \ref{span}).

\begin{figure}[ht]
\captionsetup[subfigure]{labelformat=empty}
\begin{center}

\scalebox{1}
{
\centering
\begin{tikzpicture}
[line width = .5pt,
vtx/.style={circle,draw,black,very thick,fill=black, line width = 1pt, inner sep=0pt},
]

    \node[vtx] (1) at (0,0) {};
    \node[vtx] (2) at (2,0) {};
    \node[vtx] (3) at (4,0) {};
    \coordinate (34) at (6,1) {};
    \node[vtx] (4) at (8,0) {};
    \node[vtx] (5) at (10,0) {};

    \draw[line width=0.3mm, color=lightgray]  (1) -- (5);

    \draw[line width=2.5mm, color=gray, line cap=round]  (1.5,0) -- (2.5,0);
    \draw[line width=2.5mm, color=gray, line cap=round]  (3.75,0) -- (4.75,0);
    \draw[line width=2.5mm, color=gray, line cap=round]  (7.25,0) -- (8.25,0);

    \node[color=gray] at (2.6,-0.35) {$A_1$};
    \node[color=gray] at (4.85,-0.35) {$A_2$};
    \node[color=gray] at (8.35,-0.35) {$A_3$};
    \node[color=blue] at (4,0.7) {$e$};

    \draw[line width=0.5mm, color=blue, outer sep=2mm] plot [smooth, tension=2] coordinates {(3) (34) (4)};
    \draw[line width=0.5mm, color=blue, outer sep=2mm] (3) arc (0:180:1);

    \fill[fill=black, outer sep=1mm]   (1) circle (0.1) node [below] {$1$};
    \fill[fill=black, outer sep=1mm]   (2) circle (0.1);% node [below] {$i_1$};
    \fill[fill=black, outer sep=1mm]  (3) circle (0.1);% node [below] {$i_2$};
    \fill[fill=black, outer sep=1mm]   (4) circle (0.1);% node [below] {$i_3$};
    \fill[fill=black, outer sep=1mm]   (5) circle (0.1) node [below] {$3n$};

\end{tikzpicture}
}

\end{center}

\caption{A 3-edge spanning three sets.}
\label{span}
\end{figure}

 Recall that $\cR^{(r)}$ stands for the set of all $r$-partite $r$-patterns and let $M$ be an $\cR^{(r)}$-clique  on the vertex set $V(M)\subset [rn]$, $|V(M)|=r|M|$. Since $M$ is  $r$-partite itself, the vertices of $M$ can be partitioned into $r$ sets $A_i \subseteq [rn]$ with $|A_i|=|M|$,  $1\le i\le r$, and such that for each $i=1,\dots,r-1$, all elements of $A_i$ appear before elements of $A_{i+1}$, and each edge of $M$ spans the sets $A_1,\dots,A_r$. Clearly, there is a partition $[rn]=B_1\cup\cdots\cup B_r$ into pairwise disjoint blocks such that for each $i=1,\dots,r$ we have $A_i\subseteq B_i$.
 Consequently, in estimating the largest size $z_{\cR^{(r)}}(\rm^{(r)}_{n})$ of an $r$-partite sub-matching of $\rm^{(r)}_{n}$ it suffices to consider only matchings whose edges span all sets $B_1,\dots,B_r$ of such a partition.

For a given partition $\mathbf{B}$ of $[rn]$ into blocks $B_1,\dots,B_r$ define a random variable $X_{\mathbf{B}}$ that counts the number of edges of  $\rm^{(r)}_{n}$ spanning the sets $B_i$. Note that the number of choices for $\mathbf{B}$ is $O(n^{r-1})$ and that $z_{\cR^{(r)}}(\rm^{(r)}_{n})=\max_{\mathbf{B}}X_{\mathbf{B}}$.
 We will show that, a.a.s., for every $\mathbf{B}$ we have $X_{\mathbf{B}} \le \frac{(r-1)!}{r^{r-1}}n(1+o(1))$ and for some $\mathbf{B}$ this bound is asymptotically achieved.

To this end, fix a partition $\mathbf{B}$ with $|B_i|=b_i$, $i=1,\dots,r$, where $\sum_{i=1}^rb_i=rn$. For each $r$-element set $e$ which spans the sets $B_1,\dots,B_r$, let $I_e=1$ if $e\in\rm^{(r)}_{n}$ and $I_e=0$ otherwise.
Then $X_{\mathbf{B}}=\sum_eI_e$ and  by~\eqref{1edge} and the linearity of expectation
\[
\E(X_{\mathbf{B}}) =\sum_e\E(I_e)
=   \frac{\prod_{i=1}^rb_i}{\binom{rn-1}{r-1}}.
\]
Note that for the equipartition $\mathbf{B^{*}}$ with all $b_i=n$, we get
\begin{equation}\label{B*}
\E(X_{\mathbf{B^{*}}})=  \frac{n^r}{\binom{rn-1}{r-1}} \sim \frac{(r-1)!}{r^{r-1}}n.
\end{equation}

It turns out that for all $\mathbf{B}$, the above quantity is an upper bound on $\E(X_{\mathbf{B}})$.
Indeed, by the inequality of arithmetic and geometric means, we get
\[
\prod_{i=1}^r |B_i|
\le \left( \frac{\sum_{i=1}^r |B_i|}{r} \right)^r
= n^r.
\]
Thus, for all $\mathbf{B}$,
\begin{equation}\label{ineq:X_B}
\E(X_{\mathbf{B}}) \le  \frac{n^r}{\binom{rn-1}{r-1}} \le (1+o(1))\frac{(r-1)!}{r^{r-1}}n.
\end{equation}

Now we are going to apply Lemma~\ref{azuma} to $X_{\mathbf{B}}$ for each $\mathbf{B}$ separately.
With respect to the Lipschitz condition the following observation is crucial for us.
Let $\pi_1$ be a permutation of $[rn]$ and let permutation $\pi_2$ be obtained from $\pi_1$ by swapping two elements. This way we can destroy (or create) at most two edges which may jointly contribute at most $2$ to $X_{\mathbf{B}}$. Hence, each random variable $X_{\mathbf{B}}$ satisfies the Lipschitz condition with $c=2$. Thus, Lemma~\ref{azuma} applied with $N=rn$, $c=2$, and $\eta=\sqrt{\omega n\log n}$ (where $\omega:=\omega(n)\to\infty$) yields that
$$\PP\left(|X_{\mathbf{B}}-\E(X_{\mathbf{B}})| \ge \sqrt{\omega n\log n}\right)\le 2n^{-\omega/2r}.$$
 This, together with~\eqref{ineq:X_B} and the union bound over all partitions $\mathbf{B}$, implies that, a.a.s., $z_{\cR^{(r)}}(\rm^{(r)}_{n})=\max_{\mathbf{B}}X_{\mathbf{B}}$ is at most $(1+o(1))\frac{(r-1)!}{r^{r-1}}n$. Moreover, by the same token, in view of~\eqref{B*}, this upper bound is asymptotically achieved by $X_{\mathbf{B^{*}}}$, which completes the proof of~Theorem~\ref{thm:random_r-partite}.
 \end{proof}

\subsection{Generic first moment method}\label{generic}

For most  upper bound proofs in this paper we use a standard first moment method. Here we state a generic, technical ``meta-statement'' which will be applied throughout. For a set of $r$-patterns $\cP$, let $a_{\mathcal P}(k)$ denote the  number of $\mathcal P$-cliques among the matchings in $\cM_k^{(r)}$, that is, the  number of distinct $\mathcal P$-cliques of size $k$ one can build on a given ordered set of $rk$ vertices.

\begin{lemma}\label{meta} For $r\ge2$, let $\cP$ be a set of $r$-patterns.
If there are  constants $C>0$ and  $0\le x<r$ such that  $a_{\mathcal P}(k)\le C^{k}k^{kx}$ for all $k\ge2$, then, a.a.s., $z_{\cP}(\rm^{(r)}_{n})=O(n^{1/(r-x)})$.
\end{lemma}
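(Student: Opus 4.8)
\textbf{Proof proposal for Lemma~\ref{meta}.}
The plan is a direct application of the first moment method, using the permutational scheme for $\rm^{(r)}_n$ and the edge probability~\eqref{1edge}. Fix a target size $k=k(n)$ to be chosen at the end, and let $Z_k$ denote the number of $\cP$-cliques of size $k$ contained in $\rm^{(r)}_n$. The event $\{z_\cP(\rm^{(r)}_n)\ge k\}$ is precisely $\{Z_k\ge 1\}$, so by Markov's inequality it suffices to show $\E(Z_k)\to 0$ for a suitable $k=\Theta(n^{1/(r-x)})$ (with a large enough implicit constant).

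First I would write $\E(Z_k)$ as a sum over all potential $\cP$-cliques. A $\cP$-clique of size $k$ inside $[rn]$ is determined by a choice of $rk$ vertices from $[rn]$ together with one of the $a_\cP(k)$ order-isomorphism types of $\cP$-cliques on $rk$ ordered vertices; once the vertex set and the clique structure are fixed, the probability that all $k$ prescribed edges simultaneously belong to $\rm^{(r)}_n$ equals $\alpha^{(r)}_{n-k}/\alpha^{(r)}_n$. Hence
\[
\E(Z_k)=\binom{rn}{rk}\,a_\cP(k)\,\frac{\alpha^{(r)}_{n-k}}{\alpha^{(r)}_n}.
\]
Using $\alpha^{(r)}_m=\frac{(rm)!}{(r!)^m m!}$ one computes
\[
\frac{\alpha^{(r)}_{n-k}}{\alpha^{(r)}_n}=\frac{(r!)^{k}\,n!/(n-k)!}{(rn)!/(r(n-k))!}=\binom{rn}{rk}^{-1}\binom{n}{k}^{-1}\frac{(rk)!}{(r!)^k}\cdot(r!)^{2k}\Big/ (r!)^{k}
\]
—more cleanly, the binomial $\binom{rn}{rk}$ cancels and one is left with
\[
\E(Z_k)=a_\cP(k)\,\frac{(rk)!}{(r!)^{k}\,k!}\cdot\frac{1}{\binom{rn}{rk}\big/\binom{rn}{rk}}\;=\;\frac{a_\cP(k)}{\binom{rn}{rk}}\cdot\frac{(rk)!\,\binom{rn}{rk}}{(r!)^k k!}\cdot\frac{1}{\binom{rn}{rk}}\,,
\]
so I will instead organize the bookkeeping more carefully: the number of ways to embed a fixed size-$k$ $\cP$-clique structure into $[rn]$ as a sub-matching is $\binom{rn}{rk}$ ways to pick vertices times the fixed structure, and $\Pr(\text{all }k\text{ edges present})=\prod_{j=0}^{k-1}\frac{1}{\binom{r(n-j)-1}{r-1}}\le \binom{rn-1}{r-1}^{-k}$ by~\eqref{1edge} applied iteratively (conditioning edge by edge only decreases the number of available vertices, hence increases each factor $\binom{\cdot}{r-1}^{-1}$, so this upper bound needs the reverse estimate — I would bound $\prod_{j=0}^{k-1}\binom{r(n-j)-1}{r-1}^{-1}\le \big(\binom{r(n-k)-1}{r-1}\big)^{-k}$ and note $\binom{r(n-k)-1}{r-1}\ge \frac{(r(n-k)-r)^{r-1}}{(r-1)!}=\Theta(n^{r-1})$ as long as $k\le n/2$, say). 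Combining, for $k\le n/2$,
\[
\E(Z_k)\;\le\;\binom{rn}{rk}\,a_\cP(k)\,\Big(\tfrac{(r-1)!}{(r(n-k)-r)^{r-1}}\Big)^{k}\;\le\;(ern/k)^{rk}\cdot C^k k^{kx}\cdot\Big(\tfrac{(r-1)!}{(r n/2)^{r-1}}\Big)^{k},
\]
using $\binom{rn}{rk}\le (ern/(rk))^{rk}=(en/k)^{rk}$ and the hypothesis $a_\cP(k)\le C^k k^{kx}$. Collecting the $k$-th powers, the bracket is at most $\big(C'\,n^{r}k^{-r}\,k^{x}\,n^{-(r-1)}\big)^{k}=\big(C'\,n\,k^{x-r}\big)^{k}$ for a constant $C'=C'(r,C)$. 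Therefore $\E(Z_k)\le\big(C'\,n\,k^{-(r-x)}\big)^{k}$.

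The final step is to choose $k$. Since $r-x>0$, taking $k=\lceil (2C')^{1/(r-x)}\,n^{1/(r-x)}\rceil$ gives $C'\,n\,k^{-(r-x)}\le 1/2$, whence $\E(Z_k)\le 2^{-k}\to 0$, so a.a.s.\ $z_\cP(\rm^{(r)}_n)<k=O(n^{1/(r-x)})$, as claimed. (One should check $k\le n/2$ eventually, which holds for large $n$ since $1/(r-x)\le 1$; if $r-x<1$ one instead gets $k=o(n)$ trivially, and if $r-x=1$ the constant must be chosen with a little care but the argument is unchanged.) I do not expect a genuine obstacle here: the only mildly delicate point is handling the product of conditional edge probabilities~\eqref{1edge} correctly (the conditioning helps us, not hurts, but one must phrase the inequality in the direction that is actually needed), and the routine task of absorbing all $r$- and $C$-dependent factors into a single constant raised to the $k$-th power. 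Everything else is the standard union-bound/Markov calculation.
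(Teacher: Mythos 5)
Your proposal is correct and follows essentially the same route as the paper: a first-moment bound $\E X_k=\binom{rn}{rk}\,a_\cP(k)\,\alpha^{(r)}_{n-k}/\alpha^{(r)}_n\le (C'nk^{x-r})^k$ followed by Markov's inequality at $k=\Theta(n^{1/(r-x)})$. The only substantive difference is bookkeeping: the paper evaluates $\binom{rn}{rk}\alpha^{(r)}_{n-k}/\alpha^{(r)}_n$ exactly as $(r!)^k\,n!/\bigl((rk)!\,(n-k)!\bigr)$ and bounds that directly, whereas you bound the binomial coefficient and the telescoping product of conditional edge probabilities separately, which is what forces your (harmless) side condition $k\le n/2$. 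Two blemishes to fix: your second displayed ``computation'' of $\alpha^{(r)}_{n-k}/\alpha^{(r)}_n$ is garbled and false as written (you rightly abandon it, but it should simply be deleted), and in the final parenthetical the case $r-x<1$ gives $k\gg n$ rather than $k=o(n)$ --- the conclusion there is trivial because $z_{\cP}(\rm^{(r)}_n)\le n$, not because $k$ is small.
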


\proof  For each $k\ge2$, let $X_k$ be the number of $\cP$-cliques of size $k$ in $\rm^{(r)}_n$.
In order to compute the expectation of $X_k$, one has to first choose a set $S$ of $rk$ vertices (out of all $rn$ vertices) on which a $\cP$-clique will be planted. Formally, for each $S\in\binom{[rn]}{rk}$ define an indicator random variable $I_S=1$ if there is a $\cP$-clique on $S$ in $\rm^{(r)}_n$, and $I_S=0$ otherwise.
Thus,
\[
\E I_S = \Pr(I_S=1)= a_{\mathcal P}(k)\cdot \frac{\alpha^{(r)}_{n-k}}{\alpha^{(r)}_n}
\]
and by the linearity of expectation, noticing that $X_k=\sum_S I_S$,
$$\E X_k=\binom{rn}{rk}a_{\cP}(k)\frac{\alpha^{(r)}_{n-k}}{\alpha^{(r)}_n}=\frac{(r!)^k}{(rk)!} \cdot a_{\cP}(k) \cdot \frac{n!}{(n-k)!}\le
\left( \frac{C' n}{k^{r-x}} \right)^k,
$$
where $C'=Cr!e^r r^{-r}$.
Let $k_0=\lceil C''n^{1/(r-x)}\rceil = \Theta(n^{1/(r-x)})$, where $C''>(C')^{1/(r-x)}$.
  Then, by  Markov's inequality,
\[
\Pr( X_{k_0}\ge1)
\le  \left( \frac{C'n}{(k_0)^{r-x}}\right)^{k_0}
= o(1).
\]
Finally note that  $ X_{k_0}\ge1$ is equivalent to $z_{\cP}(\rm^{(r)}_{n})\ge k_0$. \qed

\medskip

As two quick applications of Lemma~\ref{meta}, we deduce the upper bounds in Corollary~\ref{friends} and Proposition~\ref{1-over-r}. They both rely on existing upper bounds for the number $a_{\cP}(k)$ for the respective families $\cP$.

\begin{proof}[Proof of Corollary~\ref{friends}] First, recall that, by monotonicity, the lower bound in Corollary~\ref{friends}  follows from Proposition~\ref{thm:random_three_patterns_stronger}, so we only need to show the upper bound.

Let $P\in\cR^{(r)}$ and $\cP=\cP^{(r)}\setminus\{P\}$. Then, by \cite[Theorem 1.13]{AJKS}, for some $C>0$
$$a_{\cP}(k)\le C^kk^{(r-1-1/(r-1))k}.$$
This, in turn, implies, via Lemma~\ref{meta} with $x=r-1-1/(r-1)$, that, a.a.s., $z_{\cP}(\rm^{(r)}_{n})=O(n^{1-1/r})$.

\end{proof}

For the next proof, we need a generalization of Dyck words.
For positive integers $d$ and $m$, a \emph{$d$-dimensional Dyck word}  of length $dm$ over a linearly ordered alphabet of size $d$ consists  of $m$ occurrences of each letter,  and,  for each pair of letters $X<Y$, every prefix contains at least as many letters $X$ as $Y$.
They are enumerated by $d$-dimensional Catalan numbers $C^{(d)}_m$ given by formula A060854 of the OEIS data base (cf. \cite{MMoo}), which is of no practical importance to us. What \emph{is} important is the identity
 $C_m^{(d)}=C_d^{(m)}$  valid for all $m$ and $d$.

\begin{proof}[Proof of Proposition~\ref{1-over-r}] Again by monotonicity, the lower bound in  Proposition~\ref{1-over-r} follows from Theorem~\ref{thm:random_one_pattern}.
For the upper bound, observe that, by definition, $\cP^{(r)}_{Dyck}$-cliques of size $k$ are in a one-to-one correspondence with $k$-dimensional Dyck words of length $kr$. Hence, by the above identity, $a_{\cP^{(r)}_{Dyck}}(k)=C_r^{(k)}=C_k^{(r)}\le r^{rk}$, where the upper bound follows from the fact that the number of $r$-dimensional Dyck words of length $rk$ cannot exceed the total number of words of that length over an $r$-element  alphabet. Thus, Lemma~\ref{meta} can be applied with $C=r^r$ and $x=0$, yielding that, a.a.s., $z_{\cP^{(r)}_{Dyck}}(\rm^{(r)}_{n})=O(n^{1/r})$.

\end{proof}

\begin{rem}\label{r2} Returning to Theorem~\ref{thm:random_two_patterns_r=2}, recall our notation, $\cP_1=\{AABB,ABAB\}$ and $\cP_2=\{AABB,ABBA\}$, and note that $\cP_1=\cP_{Dyck}^{(2)}$, so the upper bound on $z_{\cP_1}(\rm^{(r)})$ follows by Proposition~\ref{1-over-r}.
As for $\cP_2$, Stanley \cite{StanleyCatalan} constructed a bijection between $\cP_2$-cliques and $\cP_1$-cliques of the same size, so that   $a_{\cP_2}(k)=a_{\cP_1}(k)=C_k^{(2)}<2^{2k}$, and the bound on $z_{\cP_2}(\rm^{(r)})$ follows by Lemma~\ref{meta}  with $C=4$ and $x=0$.
(Both lower bounds are implied by Theorem \ref{thm:random_one_pattern} and monotonicity.)

\end{rem}

Finally in this section, we prove the upper bound in Theorem~\ref{fullhouse}

\begin{proof}[Proof of Theorem~\ref{fullhouse}, upper bound]
%Given a sequence of words $W_1,\dots,W_m$ over the same alphabet, we denote by $\bigsqcup_{i=1}^m W_i$ their concatenation $W_1W_2\cdots W_m$.
Suppose $P=S_1S_2\cdots S_s$ is an arbitrary collectable $r$-pattern, where $S_i$ are the blocks of the splitting. Let $\lambda_P=(\lambda_1,\dots,\lambda_s)$ be the composition determined by $P$, that is, $r=\lambda_1+\cdots +\lambda_s$ and $\lambda_i=|S_i|/2$. Further,
let $\pi = \{T_0,T_1,\ldots, T_t\}$ be a fixed partition of the set $[s] = T_0\cup T_1\cup \ldots\cup T_t$
with  $1\in T_0$. Finally, for $j=0,\dots,t$, set $P_j$ for the concatenation of all segments $S_i$ with $i\in T_j$.

We are going to show that $a_{\cC(P,\pi)}(k)=(k!)^t$ which, by Lemma~\ref{meta} with $C=1$ and $x=t$, will imply that $z_{\cC(P,\pi)}(\rm^{(r)}_{n})= O\left(n^{1/(r-t)}\right)$.
 To this end, let $[rk]=V_1\cup\cdots\cup V_s$ be a partition of $[rk]$ into consecutive blocks, where $|V_i|=k\lambda_i$, for $i=1,\dots,s$. Let $U_j:=\bigcup_{i\in T_j} V_i$, for $j=0,\dots, t$, and set $r_j:=\sum_{i\in T_j}\lambda_i$.

Let  $K_j$ be a (unique) $P_j$-clique on vertex set $U_j$, $j=0,\dots,t$. Note that $K_j$ consists of $k$ edges of size $r_j$. In particular, we may have $r_j=1$, in which case $K_j$ is a 1-matching. Then, by the definition of a $t$-cube (see Section~\ref{bool}), any $t$-tuple of permutations $(\sigma_1,\sigma_2,\dots,\sigma_t)$ of $[k]$ defines a \emph{distinct}  $\cC(P,\pi)$-clique $K_0\overset{\sigma_1}{-} K_1\overset{\sigma_2}{-}\cdots \overset{\sigma_t}{-} K_t$ on $[rk]$ (for the definition of concatenation see Section~\ref{defop}).

We are going to show that
every $\cC(P,\pi)$-clique $K$ on $[rk]$ is of that form, that is, they are in one-to-one correspondence with the terms in the set union defining $K_0\times K_1\times\cdots\times K_t$ (see~\eqref{times}). Since the number of choices of $(\sigma_1,\sigma_2,\dots,\sigma_t)$ is $(k!)^t$, this will imply that  $a_{\cC(P,\pi)}(k)=(k!)^t$ and, thus, conclude the proof.

 Let $K$ be a $\cC(P,\pi)$-clique on $[rk]$. Then, for  every edge $e\in K$ we have $|e\cap V_i|=\lambda_i$, for  $i=1,\dots,s$. Indeed, suppose it is not true and let $i_0$ be the smallest index $i$ with $|e\cap V_i|\neq\lambda_i$ for some $e\in K$, say $|e\cap V_i|<\lambda_i$. Then there must be another edge $f\in K$ with $|f\cap V_i|>\lambda_i$. But then $e$ and $f$ form a pattern which on positions $2\sum_{i<i_0}\lambda_i+1,\dots,2\sum_{i\le i_0}\lambda_i$ has a different number of $A$'s and $B$'s, so it is not a pattern from $\cC(P,\pi)$, a contradiction.

Consequently,
 for each $j=0,1,\dots,t$, every edge $e\in K$ satisfies $|e\cap U_j|=r_j$.
For $j=0,\dots,t$, let $K[U_j]$ be the ordered $r_j$-matching on $U_j$ consisting of all edges $e\cap U_j$ with $e\in K$. Then,  each $K[U_j]$ makes up the unique $P_j$-clique planted on $U_j$, that is, $K_j$. This is because every pair of edges of $K[U_j]$ forms the same pattern $P_j$ (note that a flip of entire pattern does not change it).
Therefore, indeed,
\begin{equation}\label{KWi}
K=K_0\overset{\sigma_1}{-} K_1\overset{\sigma_2}{-}\cdots \overset{\sigma_t}{-} K_t
\end{equation}
for some  $t$-tuple of permutations $(\sigma_1,\sigma_2,\dots,\sigma_t)$ of $[k]$.

\end{proof}

\begin{exmp}\label{fullfull}
Let us illustrate the above proof with the 2-cube $\cC(P,\pi)$ defined by 5-pattern $P=AB|AABB|AB|BA=S_1S_2S_3S_4$ with $\lambda_P=(1,2,1,1)$ and $s=4$, and by partition $\pi$:  $[4]=\{1\}\cup\{2\}\cup\{3,4\}$. Besides $P_{\emptyset}=P$, this cube consists of $P_{\{1\}}=AB|BBAA|ABBA$, $P_{\{2\}}=AB|AABB|BAAB$ and $P_{\{1,2\}}=AB|BBAA|BAAB$.

Following notation from the above proof,
$P_0=S_1=AB$, $P_1=S_2=AABB$, and $P_2=S_3S_4=ABBA$. For $k=3$, consider the partition $[15]=V_1\cup V_2\cup V_3\cup V_4$, where the consecutive  blocks $V_i$ have sizes $3,\;6,\;3,\;3$, and let $U_0=V_1$, $U_1=V_2$,  $U_2=V_3\cup V_4$.
 Let
 $$K_0=ABC,\quad K_1=AABBCC,\quad\mbox{and}\quad K_2=ABCCBA$$
 be the $P_0$-clique, $P_1$-clique, and $P_2$-clique on the vertex sets, resp., $U_0,U_1$, and $U_2$.
Further, let $\sigma_1=231$ and $\sigma_2=132$ be a pair of permutations of $[3]$.
By Example~\ref{concat}, we  have
$$K_0\overset{\sigma_1}{-} K_1\overset{\sigma_2}{-} K_2=DEF|FFDDEE|DFEEFD$$
which, indeed, is a $\cC(P,\pi)$-clique, as the edges designated by $D$ and $E$ form pattern $P$, by $D$ and $F$ - pattern $P_{\{1\}}$, while $E$ and $F$ - pattern $P_{\{1,2\}}$.

 In the other direction, consider, for instance, the  $\cC(P,\pi)$-clique
 $$K=ABC|BBCCAA|CBAABC.$$ Then, taking $\sigma_1=312$ and $\sigma_2=321$, we get $K=K_0\overset{\sigma_1}{-} K_1\overset{\sigma_2}{-} K_2.$

\end{exmp}

\section{Combinatorial tools}\label{comtool}
In this  section we present several combinatorial tools and apply them right away to prove some of our results.

\subsection{Posets on matchings and proof of Proposition~\ref{thm:random_three_patterns_stronger} (lower bound)}\label{pos} As remarked in \cite{GP2019},  one can often define a partially ordered set (poset) on the edges of a random matching $\rm_n^{(r)}$ and then utilize Dilworth's Lemma, which we state here in a version most suited for us.

 \begin{lemma}[Dilworth, 1950]\label{DL}
 If the largest chain in a finite poset $(X,\prec)$ has size at most $a$, then the largest anti-chain in $(X,\prec)$ has size at least $|X|/a$.
 \end{lemma}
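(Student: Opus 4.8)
The plan is to prove this ``dual Dilworth'' statement (a Mirsky-type partition result) by means of a chain-rank function together with the pigeonhole principle; this direction is much softer than the full Dilworth theorem and needs no matching-theoretic input.

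First I would define, for each element $x\in X$, the quantity $f(x)$ to be the maximum size of a chain in $(X,\prec)$ whose largest element is $x$. Since every chain has size at most $a$, and the singleton $\{x\}$ is itself such a chain, we get $1\le f(x)\le a$ for all $x\in X$; thus $f$ is a well-defined map $X\to\{1,\dots,a\}$.

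The key step is to check that each level set $X_i:=f^{-1}(i)$, for $1\le i\le a$, is an anti-chain. Suppose not, so there exist $x,y\in X_i$ with $x\prec y$. Pick a chain $C$ of size $f(x)=i$ having $x$ as its largest element. Because $x\prec y$, the set $C\cup\{y\}$ is again a chain, now with largest element $y$, of size $i+1$; hence $f(y)\ge i+1>i$, contradicting $y\in X_i$. Therefore every $X_i$ is an anti-chain.

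Finally, $X=X_1\cup\cdots\cup X_a$ is a partition of $X$ into at most $a$ anti-chains, so by the pigeonhole principle some $X_i$ satisfies $|X_i|\ge|X|/a$, and this $X_i$ is the desired anti-chain. I do not anticipate any genuine obstacle; the only point to keep straight is the convention that the ``size'' of a chain or anti-chain is its number of elements (so the case $|X|=0$ is vacuous and $a\ge1$ whenever $X\neq\emptyset$), which is exactly the convention under which the lemma is invoked later in the paper.
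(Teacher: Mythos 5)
Your proof is correct and complete: the chain-rank function $f$, the verification (via transitivity) that each level set $f^{-1}(i)$ is an anti-chain, and the pigeonhole step give exactly the standard Mirsky-type argument for this dual form of Dilworth's theorem. The paper states this lemma as a classical fact without proof, so there is nothing to compare against; your argument is the canonical one and would serve as a proof if one were wanted.
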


A general template of how to proceed in such scenarios is described in the next lemma.

\begin{lemma}\label{template} Let $\cP\subseteq\cP^{(r)}$ be a set of $r$-patterns and let $N$ be a $\cP$-clique in an $r$-matching~$M$.
Suppose  that for a subset $\cQ\subset \cP$,  a poset $(N,\prec)$  can be defined in such a way that for all $e,f\in N$ we have $e\prec f$ if $\min e<\min f$ and  $e,f$ form an $r$-pattern from $\cQ$.
Then,  $z_{\cQ^c}(M)\ge |N|/z_{\cQ}(M)$, where $\cQ^c=\cP\setminus\cQ$.
\end{lemma}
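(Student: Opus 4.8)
The statement is essentially a direct application of Dilworth's Lemma (Lemma~\ref{DL}) to the poset $(N,\prec)$, so the proof should be short. First I would record that $(N,\prec)$ is a genuine finite poset: this requires checking transitivity of $\prec$, since reflexivity is irrelevant (it is a strict order) and antisymmetry/irreflexivity follow from the fact that $e\prec f$ forces $\min e<\min f$. Transitivity is the only thing with content: if $e\prec f$ and $f\prec g$, then $\min e<\min f<\min g$, and one must argue that $e$ and $g$ form a pattern from $\cQ$. I would expect this to hold because the patterns in $\cQ$ are exactly those that are ``compatible'' in the relevant sense — but since the lemma is stated as a template to be instantiated later, it may be cleanest to simply \emph{assume} as part of the hypothesis that $\prec$ is a partial order (i.e., that $\cQ$ is chosen so that transitivity holds), and note that this is how the lemma will be applied.

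Granting that $(N,\prec)$ is a poset, the second step is the dichotomy: either $(N,\prec)$ contains a chain of size larger than $z_{\cQ}(M)$, or it does not. A chain $e_1\prec e_2\prec\cdots\prec e_m$ in $(N,\prec)$ is, by definition of $\prec$, a sub-matching of $N$ (hence of $M$) in which every pair of edges forms a pattern from $\cQ$; that is, it is a $\cQ$-clique in $M$ of size $m$. Therefore the largest chain in $(N,\prec)$ has size at most $z_{\cQ}(M)$. Set $a:=z_{\cQ}(M)$; by Lemma~\ref{DL}, the poset $(N,\prec)$ contains an anti-chain $A$ with $|A|\ge |N|/a = |N|/z_{\cQ}(M)$.

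The final step is to identify this anti-chain as a $\cQ^c$-clique. Let $e,f\in A$ with $\min e<\min f$. Since $e$ and $f$ are incomparable in $(N,\prec)$, in particular $e\not\prec f$, which by the defining property of $\prec$ means that $e$ and $f$ do \emph{not} form a pattern from $\cQ$. On the other hand, $N$ is a $\cP$-clique, so $e$ and $f$ form some pattern from $\cP$; hence they form a pattern from $\cP\setminus\cQ=\cQ^c$. As this holds for every pair in $A$, the sub-matching $A$ is a $\cQ^c$-clique in $M$, and thus $z_{\cQ^c}(M)\ge |A|\ge |N|/z_{\cQ}(M)$, as claimed.

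\textbf{Main obstacle.} There is no real obstacle here — the argument is a two-line invocation of Dilworth's Lemma once the poset structure is in place. The only point demanding care is the verification that $\prec$ is transitive (equivalently, that ``$e,f$ form a pattern from $\cQ$'' together with ``$\min e<\min f$'' is well-behaved under composition); in the intended applications $\cQ$ will be a set of patterns closed under the relevant compatibility, and this should be checked at each instantiation rather than in the template itself. One should also note the degenerate edge cases: if $z_{\cQ}(M)$ were somehow ill-defined because $N$ has no two edges forming a $\cQ$-pattern, then $\prec$ is the empty relation, every element of $N$ is isolated, and $N$ itself is a $\cQ^c$-clique, so the bound holds trivially; in practice $z_{\cQ}(M)\ge 1$ always, so $|N|/z_{\cQ}(M)$ is well-defined.
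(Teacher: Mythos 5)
Your proof is correct and follows essentially the same route as the paper's: chains of $(N,\prec)$ are $\cQ$-cliques, anti-chains are $\cQ^c$-cliques (since $N$ is a $\cP$-clique), and Dilworth's Lemma gives the bound. Your extra remarks on transitivity and the degenerate case are reasonable but not needed beyond what the paper already assumes in the hypothesis that $(N,\prec)$ is a poset.
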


\proof Note that for such a poset, every chain is a $\cQ$-clique, while every anti-chain is a $\cQ^c$-clique. As $N\subseteq M$, by monotonicity, $z_{\cQ}(N)\le z_{\cQ}(M)$ and $z_{\cQ^c}(N)\le z_{\cQ^c}(M)$.  By Lemma~\ref{DL} applied to $(N,\prec)$ we thus conclude that
$$z_{\cQ^c}(M)\ge z_{\cQ^c}(N)\ge |N|/z_{\cQ}(N)\ge |N|/z_{\cQ}(M).$$  \qed

 \medskip
  In this paper we benefit from Lemma~\ref{template} only in the case of $r$-partite patterns, relying, in addition, on Corollary~\ref{Mpart}. We now deduce the lower bound in Proposition~\ref{thm:random_three_patterns_stronger} from the upper bound in Theorem~\ref{fullhouse}.

  \begin{proof}[Proof of Proposition~\ref{thm:random_three_patterns_stronger}, lower bound]
  Let $P=S_1\cdots S_r$, where $|S_i|=2$ for $i=1,\dots,r$, and let $\pi$ be a partition $[r]=T_0\cup\cdots\cup T_t$ such that $1\in T_0$ and all other parts $T_i$ are singletons. Further, let $T_0^{AB}\subseteq T_0$ consist of all those indices $i$ for which $S_i=AB$, and let $T_0^{BA}=T_0\setminus T_0^{AB}$.  Recall that $R_n^{(r)}$ denote the largest $r$-partite sub-matching of $\rm^{(r)}_{n}$ and that by Corollary~\ref{Mpart} we have $|R_n^{(r)}|=\Theta(n)$, a.a.s. Given two edges $e,f\in R_n^{(r)}$ with $\min e<\min f$, let
 \begin{equation}\label{po3}
e=(x_1<\cdots <x_r)\prec f=(y_1<\cdots <y_r)\quad\mbox{if}\quad\begin{cases} x_i<y_i\quad\mbox{for all}\quad i\in T_0^{AB}
\\x_i>y_i\quad\mbox{for all}\quad i\in T_0^{BA}.\end{cases}
\end{equation}

It is easy to see that in the poset $(R_n^{(r)},\prec)$ we have $e\prec f$ if and only if  $e$ and $f$ form a pattern in $\cC(P,\pi)$, so  a chain is  a $\cC(P,\pi)$-clique and an anti-chain is an $\cR^{(r)}\setminus\cC(P,\pi)$-clique.
   By Theorem~\ref{fullhouse}, a.a.s., $z_{\cC(P,\pi)}(\rm^{(r)}_{n})=O(n^{1/(r-t)})$. Thus, we infer from Lemma~\ref{template} with $N=R_n^{(r)}$ and $M=\rm^{(r)}_{n}$ that, a.a.s., $z_{\cR^{(r)}\setminus\cC(P,\pi)}(\rm^{(r)}_{n})=\Omega(n^{1-1/(r-t)})$.
\end{proof}

   As an illustration of the proof, let us return to Example~\ref{28}, in which $T_0^{AB}=T_0=[4]$ and $T_0^{BA}=\emptyset$, and consider the 6-partite matching
   $$ABCDE\;ABCDE\;ABCDE\;EDABC\;|BACED\;CBADE.$$
   consisting of five edges $e_A,\dots,e_E$, corresponding to letters $A-E$.
   In the poset defined by~\eqref{po3}, the edges $e_A,e_B,e_C$ form a chain, while the edges $e_A,e_D,e_E$ form an anti-chain.

\subsection{Pattern avoiding permutations and proof of Proposition~\ref{fullhouse-1}}\label{pap}

For the  proof of Proposition~\ref{fullhouse-1} we will need a result of Gunby and P\'{a}lv\"{o}lgyi from \cite{GP2019} concerning parallel pattern avoidance by permutations.

For integers $k\ge m$, let $\sigma$ be a permutation of $[k]$ and let $\tau$ be a permutation of $[m]$ (called pattern). We say that $\sigma$ \emph{avoids} pattern $\tau$ if there is no sequence $a_1<\dots <a_m$ such that the sub-permutation  $\sigma(a_1),\dots, \sigma(a_m)$ is order-isomorphic to permutation $\tau$. This notion can be generalized in a natural way to sequences of permutations. Let $\sigma=(\sigma_1,\dots,\sigma_d)$ be a $d$-tuple of permutations of $[k]$ and let $\tau=(\tau_1,\dots,\tau_d)$ be a $d$-tuple of permutations of $[m]$ (patterns). We say that $\sigma$ \emph{avoids} $\tau$ if there is no sequence $a_1<\cdots <a_m$ such that   sub-permutation $\sigma_i(a_1),\dots, \sigma_i(a_m)$ is order-isomorphic to permutation $\tau_i$ for $i=1,\dots,d$. Given a $d$-tuple $\tau$ of permutations of $[m]$, let $f_d(k)$ denote the number of $d$-tuples of permutations of $[k]$ avoiding $\tau$.

Generalizing the celebrated result by Markus and Tardos \cite{MT}, Gunby and P\'{a}lv\"{o}lgyi proved the following estimates on $f_d(k)$.
\begin{thm}[\cite{GP2019}]\label{thm Gunby-Palvolgyi}
	Let $k\ge m$ and $\tau$ be a fixed $d$-tuple of permutations of $[m]$.  Then there exist positive constants $c_1$ and $c_2$ (depending on $\tau$) such that
	\[
	c_1^k\cdot k^{k(d-1/d)}\leqslant f_d(k)\leqslant c_2^k\cdot k^{k(d-1/d)}.
	\]
\end{thm}

  \begin{proof}[Proof of Proposition~\ref{fullhouse-1}]
 Let $\mathcal C(P,\pi)$ be a $t$-cube, where $P,\pi$ and all other notation is as in the proof of the upper bound in Theorem~\ref{fullhouse} presented in Section~\ref{generic}.  In addition, let $\pi'$ be a partition
 $$[s]=(T_0'\cup\cdots\cup T_u')\cup T_1\cup\cdots \cup T_t,$$
 where  $1\in T_0'$ and $T_1',\dots, T_u'$ are all non-empty, and $T_0=T_0'\cup\cdots\cup T_u'$. Further, set $P_j'$ for the concatenation of segments $S_i$ with $i\in T_j'$, and $U_j':=\bigcup_{i\in T_j'}V_i$, for $j=0,\dots,u$.
  If we were after the $(u+t)$-cube $\cC(P,\pi')$, we would, as
 before, just take a $(\sigma_1',\dots,\sigma_u',\sigma_1,\dots,\sigma_t)$-concatenation
 $$K[U'_0]\overset{\sigma_1'}{-}K[U_1']\overset{\sigma_2'}{-}\cdots\overset{\sigma_u'}{-}K[U_u']\overset{\sigma_1}{-} K[U_1]\overset{\sigma_2}{-}\cdots \overset{\sigma_t}{-} K[U_t].$$
  However, to avoid the forbidden $t$-cube $\cC(P,\pi)$,  the $u$-tuple of permutations $(\sigma'_1,\dots,\sigma'_u)$ should collectively avoid the $u$-tuple of permutations $(\tau_1,\dots,\tau_u)$ of $[2]:=\{1,2\}$ where $\tau_j=12$ if $P'_j$ begins with $A$ and $\tau_j=21$, otherwise.

Thus, by Theorem~\ref{thm Gunby-Palvolgyi} with $d=u$, we have $a_{\cP}(k)\le c_2^kk^{(u-1/u)k}\times (k!)^t$
for $\cP=\cC(P,\pi')\setminus  \cC(P,\pi)$. Consequently, by  Lemma~\ref{meta} with $C=c_2$ and $x=u-1/u+t$, we get, a.a.s., $z_{\cP}(\rm^{(r)}_{n})= O\left(n^{\frac{1}{r-u+1/u-t}}\right)$, which completes the proof.

\end{proof}

\begin{rem} For $P\in\cR^{(r)}$ and $t=0$, the bound on $a_{\cP}(k)$ in the above proof reduces to $a_{\cR^{(r)}\setminus\{P\}}(k)\le c_2^kk^{(r-1-1/(r-1))k}$ and could be used in the proof of Corollary~\ref{friends} instead of the result from \cite{AJKS}, but only in the special case when $\cP=\cR^{(r)}\setminus\{P\}$.
\end{rem}

%Let us mention that the same bound on $a_{\cP}(k)$ was proved in \cite[Theorem 1.13]{AJKS}  by a different method (not relying on
%Theorem~\ref{thm Gunby-Palvolgyi}).

\subsection{Reconstruction from traces and proof of Theorem~\ref{thm:random_two_patterns_gen} for mismatched pairs}\label{trace}
In some proofs of upper bounds, in order to estimate the number $a_{\cP}(k)$ of distinct $\cP$-cliques of size $k$ appearing in Lemma~\ref{meta}, one can employ the concept of the trace of a matching introduced in \cite{DGR-socks}.

Let $M$ be an ordered $r$-matching of size $n$. Assign number $i$ to the $i$-th (from the left) vertex of each edge of $M$. The obtained $r$-ary sequence (with alphabet $[r]$) will be called the \emph{trace of $M$} and denoted by $\tr(M)$. For instance,
for $M=AABCBDBDACCD$, we have $\tr(M)=121121323233$. In general, there may be many matchings with the same  trace, e.g.,  $M'=AABCCDADDBBC$ has $\tr(M')=\tr(M)$.
 However, for some restricted families of matchings $\cF$ it may happen that no two members of $\cF$ have the same trace. Then, knowing the trace of a matching $M\in\cF$, one is able to uniquely reconstruct $M$. In that case we call the family $\cF$ \emph{reconstructible}.
Using this concept, we obtain the following upper bound on the size of the largest $\cP$-clique in $\rm_n^{(r)}$ for a related class of pattern families $\cP$.

%The main idea behind a couple of forthcoming proofs is based on the following simple observation.

\begin{observation}\label{recon}
Let $\cP$ be a set of $r$-patterns such that the family of all $\cP$-cliques is reconstructible. Then, a.a.s.,  $z_{\cP}(\rm_n^{(r)})=O(n^{1/r})$.
\end{observation}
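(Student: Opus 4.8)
The plan is to apply the generic first moment method, Lemma~\ref{meta}, after bounding $a_{\cP}(k)$ via the reconstructibility hypothesis. Since every $\cP$-clique of size $k$ is an ordered $r$-matching on $[rk]$ and, by assumption, is uniquely determined by its trace, the map $K\mapsto\tr(K)$ is injective on the family of all $\cP$-cliques of size $k$. Hence $a_{\cP}(k)$ is at most the number of possible traces, namely $r$-ary sequences of length $rk$ in which each symbol $i\in[r]$ occurs exactly $k$ times (the latter because a trace of a size-$k$ matching assigns the label $i$ to exactly one vertex of each of the $k$ edges). That count is the multinomial coefficient $\binom{rk}{k,k,\dots,k}=\frac{(rk)!}{(k!)^r}$, which by Stirling is at most $C^k$ for a suitable constant $C=C(r)$; even the crude bound $r^{rk}$ suffices.

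First I would state this chain of inequalities cleanly: $a_{\cP}(k)\le \binom{rk}{k,\dots,k}\le r^{rk}=(r^r)^k$ for all $k\ge2$. This is exactly the hypothesis of Lemma~\ref{meta} with $C=r^r$ and $x=0$ (since $0\le 0<r$). Then I would simply invoke Lemma~\ref{meta} to conclude that a.a.s.\ $z_{\cP}(\rm_n^{(r)})=O(n^{1/(r-0)})=O(n^{1/r})$, which is the desired statement.

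There is essentially no obstacle here; the only point requiring a word of care is the exact form of the set of admissible traces. One should note that not every $r$-ary word of length $rk$ with balanced letter frequencies need arise as a trace of a matching, but we only need an upper bound on $a_{\cP}(k)$, so overcounting is harmless — the bound $a_{\cP}(k)\le r^{rk}$ is all that is used. I would also remark that the constant in the $O(\cdot)$ depends on $r$ (through $C''>(C')^{1/r}$ in the proof of Lemma~\ref{meta}), which is fine since $r$ is fixed throughout.

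\begin{proof}[Proof of Observation~\ref{recon}]
Fix $r\ge2$ and let $\cP$ be a set of $r$-patterns whose family of $\cP$-cliques is reconstructible. For $k\ge2$, consider the $a_{\cP}(k)$ distinct $\cP$-cliques of size $k$ that can be built on a fixed ordered set of $rk$ vertices. By reconstructibility, the assignment $K\mapsto\tr(K)$ is injective on this family, so $a_{\cP}(k)$ is at most the number of possible traces of an ordered $r$-matching of size $k$. Such a trace is a word of length $rk$ over the alphabet $[r]$ in which every letter $i\in[r]$ appears exactly $k$ times (one per edge). Therefore
\[
a_{\cP}(k)\le \binom{rk}{k,k,\dots,k}=\frac{(rk)!}{(k!)^r}\le r^{rk}=(r^r)^k.
\]
Thus the hypothesis of Lemma~\ref{meta} holds with $C=r^r$ and $x=0$, and since $0\le x<r$, Lemma~\ref{meta} gives that a.a.s.\ $z_{\cP}(\rm_n^{(r)})=O\!\left(n^{1/(r-x)}\right)=O(n^{1/r})$.
\end{proof}
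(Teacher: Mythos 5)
Your proof is correct and follows exactly the paper's argument: reconstructibility makes the trace map injective, so $a_{\cP}(k)\le r^{rk}$, and Lemma~\ref{meta} with $C=r^r$ and $x=0$ gives the bound $O(n^{1/r})$. The extra observation that traces are balanced words is a harmless refinement the paper does not bother with.
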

\begin{proof}
By assumption,  there are no more $\cP$-cliques on a vertex set of size $rk$ than there are words of length $rk$ over alphabet $[r]$. So, $a_{\cP}(k)\leqslant r^{rk}$ and the conclusion follows by Lemma \ref{meta} with $C=r^r$ and $x=0$.
\end{proof}

\begin{rem} Traces of ordered matchings have a special structure: for every pair $1\le i<j\le n$, in every prefix of the trace $\tr(M)$ the number of occurrences of digit $i$ is at least as large as the number of occurrences of digit $j$. Thus, they are $r$-dimensional Dyck words of length $rn$ defined in Section~\ref{generic} and enumerated by $r$-dimensional Catalan numbers $C_n^{(r)}$. In fact, one can show that $\cP_{Dyck}^{(r)}$-cliques of size $k$ are (bijectively) reconstructible, yielding the formula
$a_{\cP_{Dyck}^{(r)}}(k)=C_k^{(r)}$. Thus, recalling that, by definition, $a_{\cP_{Dyck}^{(r)}}(k)=C_r^{(k)}$,  we recover the identity $C_r^{(k)}=C_k^{(r)}$ used in the proof of Proposition~\ref{1-over-r}.
\end{rem}

In the remainder of this section we will use Observation~\ref{recon} to prove the upper bound in Theorem~\ref{thm:random_two_patterns_gen} for mismatched pairs (again, the lower bound follows by Theorem~\ref{thm:random_one_pattern} and  monotonicity).
But  first, let us make an observation about the inheritance  of the property of being mismatched.

For an $r$-matching $M$ and an integer $1\le j\le r$, let $M^{-j}$ denote the $(r-1)$-matching obtained  by removing the $j$-th letter from each edge of $M$. For instance, if $M=AABCBDBCCDDA$, then  $M^{-1}=ABBCCDDA$, $M^{-2}=ABCDBCDA$, and $M^{-3}=AABCBDCD$.
 In particular, this removal operation can be applied to $r$-patterns. For example,
 if $P=AABBBABA$, then $P^{-1}=P^{-2}=ABBABA$, while $P^{-3}=AABBBA$. Clearly, if $P$ is collectable, then so is $P^{-j}$ for every $j$, but the opposite does not necessarily hold.

\begin{observation}\label{obs:1}
For every $r\geq 3$ and any mismatched pair $\{P,Q\}$ of $r$-patterns, there exists $1\le j\le r$ such that $\{P^{-j},Q^{-j}\}$ is a mismatch, too.
\end{observation}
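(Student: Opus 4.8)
The plan is to treat the three defining cases of a mismatch separately and, in each, produce an explicit index $j$ for which the deletion $M\mapsto M^{-j}$ preserves the relevant obstruction. Recall that $\{P,Q\}$ is a mismatch if (i) exactly one of $P,Q$ is collectable, or (ii) both are collectable but not harmonious (i.e.\ $\lambda_P\ne\lambda_Q$). Throughout I will use the fact, noted in Section~\ref{defop}, that if $P$ is collectable then $P^{-j}$ is collectable for every $j$, together with the characterization that $P$ is collectable iff it splits into blocks $A^tB^t$ or $B^tA^t$.

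\textbf{Case (i): one of the two is not collectable.} Say $P$ is collectable and $Q$ is not. Since $Q^{-j}$ is not forced to be non-collectable, the point is to pick $j$ so that $Q^{-j}$ \emph{stays} non-collectable while $P^{-j}$ of course remains collectable; then $\{P^{-j},Q^{-j}\}$ is still a mismatch of type (i). Here I would use the description of non-collectable patterns from \cite[Proposition 2.1]{JCTB_paper}: $Q$ is non-collectable iff at some point when reading $Q$ from the left the running counts of $A$ and $B$ fail the block condition, equivalently there is a ``witness'' pair of positions exhibiting an $AABB$-type or more generally a forbidden interleaving. One shows that among the $r$ possible deletions, at most $r-1$ can destroy a fixed witness, so for $r\ge 3$ (in fact whenever $r\ge2$ and $Q$ has a witness that survives) there is a surviving $j$; more carefully, one argues directly that a single letter deletion cannot turn every non-collectable obstruction into a collectable structure unless $r\le 2$. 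I expect this to require a short case analysis on where the first violation of the block pattern occurs in $Q$.

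\textbf{Case (ii): both collectable, $\lambda_P\ne\lambda_Q$.} Write $P=S_1\cdots S_s$ and $Q=S'_1\cdots S'_{s'}$ with compositions $\lambda_P=(\lambda_1,\dots,\lambda_s)$ and $\lambda_Q=(\mu_1,\dots,\mu_{s'})$ of $r$. Deleting the $j$-th letter from $P$ replaces the block $S_i$ containing position $j$ (among the $A$'s, equivalently the $B$'s — note each block $A^tB^t$ contributes $t$ of the first $r$ positions) by a block with $\lambda_i$ decreased by one, and if $\lambda_i$ was $1$ that block disappears (possibly merging its neighbours if they have the same orientation). So $\lambda_{P^{-j}}$ is obtained from $\lambda_P$ by decrementing one coordinate (and deleting it and possibly merging if it hits $0$); similarly for $Q$. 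The task is to choose a single $j\in[r]$ such that the resulting compositions of $r-1$ are still unequal. The natural choice: let $j$ be the position realizing the first place where the partial sums of $\lambda_P$ and $\lambda_Q$ diverge, i.e.\ the smallest $m$ with $\lambda_1+\dots+\lambda_a \le m < \lambda_1+\dots+\lambda_{a+1}$ for $P$ but a different block index for $Q$; decrementing there changes the two compositions in a way that still leaves an early discrepancy. I would phrase this cleanly by comparing the two compositions as monotone step functions on $[r]$ and observing that one well-chosen decrement cannot make them coincide everywhere unless they already agreed except in one coordinate — and that exceptional situation can be handled by hand.

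\textbf{Main obstacle.} The delicate point is the merging phenomenon in both cases: when a deletion reduces some $\lambda_i$ to $0$, the two flanking blocks may fuse if they have the same $AB$/$BA$ orientation, which can change the composition more drastically than a simple decrement and, in principle, could accidentally make $\lambda_{P^{-j}}=\lambda_{Q^{-j}}$ or turn a non-collectable word collectable. The bulk of the work is to show that one can always steer around this — e.g.\ by choosing $j$ inside a block of size $\lambda_i\ge 2$ when one exists near the point of discrepancy, and only resorting to size-$1$ blocks after checking the orientations of the neighbours. Since $r\ge3$, there is always enough ``room'': a composition of $r\ge3$ into at least two parts, or with a part of size $\ge2$, leaves a safe deletion, and the few extremal configurations (like $\lambda=(1,1,1)$ versus $\lambda=(1,2)$ when $r=3$) can be verified directly against Table~\ref{table:relations}. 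I would finish by noting that this is exactly why $r\ge3$ is needed: for $r=2$ there is simply no room, consistent with the statement's hypothesis.
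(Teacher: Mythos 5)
There are two genuine gaps. First, in your Case (i) your entire strategy is to choose $j$ so that $Q^{-j}$ \emph{remains non-collectable}, and you assert that a single deletion cannot destroy every obstruction ``unless $r\le2$''. This is false at $r=3$: every $2$-pattern is collectable, so for the unique non-collectable $3$-pattern $Q=P_0=AABABB$ the reduction $Q^{-j}$ is collectable for \emph{every} $j$. The statement is still true there, but only because one can choose $j$ so that the two resulting collectable $2$-patterns have \emph{different compositions}, i.e.\ the mismatch survives by switching type; this requires a separate (short, by inspection) verification for each collectable $3$-pattern $P$, which your plan does not contain. Even for $r\ge4$, your counting claim ``at most $r-1$ deletions can destroy a fixed witness'' is unsubstantiated; one really needs to look at where the first violation of the block structure occurs in $Q$ (write $Q=RA^qB^tAS$ with $R$ collectable and $q>t\ge1$) and branch on whether $R=\emptyset$, $t\ge2$, $q\ge3$, etc., to exhibit a $j$ for which $Q^{-j}$ is still of that forbidden form.

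Second, in Case (ii) your primary choice of $j$ (the first position where the block indices of $P$ and $Q$ diverge) can fail outright: for $\lambda_P=(2,5)$ and $\lambda_Q=(1,1,5)$ it gives $j=2$, and then $\lambda_{P^{-2}}=(1,5)=\lambda_{Q^{-2}}$ (taking $j=1$ fails too). Your fallback (``they already agreed except in one coordinate'') does not cover this configuration, so the argument is incomplete. A cleaner route, and the one the paper takes, is to compare only the \emph{first} parts $g_1,h_1$ of the two compositions: if $g_1=h_1$ take $j=1$; if $g_1>h_1$ take $j=r$ (which leaves the unequal first parts untouched unless $P$ consists of a single block, and the single exceptional configuration $\lambda_P=(r)$, $\lambda_Q=(r-1,1)$ is fixed by taking $j=1$ instead). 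Finally, the ``merging'' phenomenon you flag as the main obstacle does not actually occur: the $j$-th $A$ and the $j$-th $B$ of an edge always lie in the same block of the splitting, so deleting the $j$-th letter simply decrements one part of the composition (removing it if it reaches $0$), and two surviving adjacent blocks of the forms $A^aB^a$, $B^cA^c$, etc., never fuse into a single block of the canonical (unique) splitting. So you spend your effort guarding against a non-issue while the real failure modes above go unaddressed.
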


\proof We split the proof into two cases according to the mismatch definition.

{\bf Case 1:} Assume that $P$ and $Q$ are both collectable but do not form a harmonious pair.  Let $\lambda_P=(g_1,g_2,\dots)$ and $\lambda_Q=(h_1,h_2,\dots)$,
where $\lambda_P\neq\lambda_Q$.
If $g_1=h_1$, then take $j=1$, as, clearly, $\lambda_{P^{-1}}\neq\lambda_{Q^{-1}}$.
If (w.l.o.g.)  $g_1>h_1$, then take $j=r$, as $\lambda_{P^{-r}}\neq\lambda_{Q^{-r}}$,  unless $g_1=r$, $h_1=r-1$, and $h_2=1$, in which case  take $j=1$, as then $\lambda_{P^{-1}}\neq\lambda_{Q^{-1}}$ again.

\medskip

{\bf Case 2:} Assume that $P$ is collectable, but $Q$ is not. The case $r=3$ (and $Q=P_0=AABABB$) is the only one in which $Q^{-j}$ is collectable no matter what $j$ is (simply because all three 2-patterns are such).
Still, one can check by inspection that when choosing $j=3$ for each of the six 3-patterns beginning with $AB$, $j=1$ for $P=P_2$ and $P=P_3$, and $j=2$ for $P=P_1$, we obtain $\lambda_{P^{-j}}\neq\lambda_{Q^{-j}}$ in every case.

For $r\ge4$ we will show that, for some $j$,  $Q^{-j}$ remains non-collectable, which automatically makes the pair $P^{-j}, Q^{-j}$ a mismatch.
Being  non-collectable, $Q$ must be of the form $Q=RA^qB^tAS$ or $Q=RB^qA^tBS$ with $R$ collectable (possibly empty) and $q>t\ge 1$. By symmetry, we consider only the former case.  Note that then suffix $S$ contains at least two letters $B$.

If $R\neq\emptyset$, take $j=1$. Then $Q^{-1}=R'A^qB^tAS$  with $R'$ collectable (possibly empty), thus $Q^{-1}$ is non-collectable.

For $R=\emptyset$, consider three further subcases.
 If $t\ge2$, take $Q^{-1}=A^{q-1}B^{t-1}A S$, if $q\ge3$ and $t=1$, take $Q^{-2}=A^{q-1}BA S$, while if $q=2$ and $t=1$, take $Q^{-r}=AABS'$, where $S'$ is obtained from suffix $AS$ by removing the last letters $A$ and $B$.  In each case the resulting $(r-1)$-pattern $Q^{-j}$  is visibly  non-collectable.
 \qed

\medskip

\begin{proof}[Proof of Theorem~\ref{thm:random_two_patterns_gen}, mismatched pairs]
Let $\{P,Q\}$ be a mismatched pair of $r$-patterns.
In view of Observation~\ref{recon}, it is sufficient to demonstrate that the family of $\{P,Q\}$-cliques is reconstructible. We proceed by induction on $r$.  For $r=2$, it was already shown in the proof of Theorem~\ref{thm:random_two_patterns_r=2} above that for
both mismatched pairs,  $\mathcal P_1=\{AABB,\;ABAB\}$ and $\mathcal P_2=\{AABB,\;ABBA\}$, the corresponding families of cliques are reconstructible.

Assume now that $r\geq 3$ and that we have proved the statement for  uniformity $r-1$.
 Let $M$ and $N$ be two different $\{P,Q\}$-cliques of size $k$ on vertex set $V$. We are going to show that $\tr(M)\neq \tr(N)$. To this end, let  $j$ be given by Observation~\ref{obs:1}. After removing the $j$-th letter from each edge of $M$ and $N$, we obtain $\{P^{-j},Q^{-j}\}$-cliques $M^{-j}$ and $N^{-j}$, where  the pair $\{P^{-j},Q^{-j}\}$ is still a mismatch.
 If $V(M^{-j})\neq V(N^{-j})$, then $\tr(M)\neq \tr(N)$ as digit $j$ appears in $\tr(M)$ and $\tr(N)$ on different positions.

 Assume then that
 $V(M^{-j})= V(N^{-j})=V\setminus\{v_1,\dots,v_k\}$ where $v_1<\dots<v_k$. We are going to show that $M^{-j}\neq N^{-j}$ and then apply induction. Suppose that $M^{-j}= N^{-j}$.
  Let $e_1,\dots, e_k$ be the edges of $M$ and $e'_1,\dots, e'_k$ be the corresponding edges of $M^{-j}$. We may assume that $e_i=e_i'\cup\{v_i\}$, $i=1,\dots,k$. Then $N=\{e_i'\cup\{v_{\alpha(i)}\}: i=1,\dots,k\}$ where $\alpha$ is a permutation of $[k]$. Since $M\neq N$, $\alpha$ is not an identity and so there is an inversion in $\alpha$: a pair of indices $i<\ell$ such that $\alpha(i)>\alpha(\ell)$. W.l.o.g., let the pair $e_i,e_\ell$ form $r$-pattern $P$, so $e_i',e_\ell'$ form $P^{-j}$. However, due to the inversion, the edges $e_i'\cup\{v_{\alpha(i)}\}$ and $e_\ell'\cup\{v_{\alpha(\ell)}\}$ must form an $r$-pattern different from $P$ (as the order of the $j$-th $A$ and the $j$-th $B$ is reversed when compared to the pair $e_i,e_\ell$). So, they must form $Q$. This, however, contradicts the fact that $Q^{-j}\neq P^{-j}$.

 We thus proved that $M^{-j}\neq N^{-j}$, and consequently, by the induction hypothesis, $\tr(M^{-j})\neq \tr(N^{-j})$. Putting back digit $j$ to the original set of $k$ positions (the same for $M$ and for $N$), we infer that $\tr(M)\neq \tr(N)$. This concludes the proof of reconstructability of the family of $\{P,Q\}$-cliques, and thus completes the proof of Theorem~\ref{thm:random_two_patterns_gen} for mismatched pairs $\{P,Q\}$.

\end{proof}

%Observation~\ref{obs:2} together with Lemma~\ref{meta} with $C=r^r$ and $x=0$ complete the proof of the upper bound in the  third statement of Theorem~\ref{thm:random_two_patterns_gen}.

\begin{rem}
In fact, the above proof does not need the notions of reconstructability. Indeed, based just on Observation~\ref{obs:1}, one could prove an exponential bound $a_{P,Q}(k)<2^{k\left\{\binom{r+1}2-1\right\}}$  directly, still by induction on $r$. For this, it suffices to show (as we did along the way) that the mapping assigning to a $\{P,Q\}$-clique $M$ of size $k$ an ordered pair $(\{v_1,\dots,v_k\},M^{-j})$ consisting of  the $j$-th vertices of the edges of $M$ and a $\{P^{-j},Q^{-j}\}$-clique $M^{-j}$ obtained from $M$ by removing them is injective. Then
$$a_{\{P,Q\}}(k)\le {rk\choose k}\cdot a_{\{P^{-j},Q^{-j}\}}(k)\leq 2^{rk}\cdot 2^{k\left\{\binom{r}2-1\right\}}=2^{k\left\{\binom{r+1}2-1\right\}}.$$
\end{rem}

\section{Pairs of non-collectable patterns}\label{non-collect}

For  non-collectable $r$-patterns $P$ and $Q$, the (a.a.s.) upper bound  $z_{\{P,Q\}}(\rm^{(r)}_{n})=O(1)$ in Theorem~\ref{thm:random_two_patterns_gen} follows easily by a Ramsey type argument. In fact, a stronger, fully deterministic,  statement holds. Let $\cP$ be a set of  non-collectable $r$-patterns  of size $|\cP|=m\ge2$. Then the largest $\cP$-clique has size smaller than the multicolor Ramsey number $R_{m}(3)$. Indeed, let $M$ be a $\cP$-clique with $k$ edges and consider the complete graph $K_k$ whose vertices are the edges of $M$ whereas each edge of $K_k$ is colored with the pattern formed by its ends. %the edges are colored by the patterns.
By \cite[Prop. 2.1]{JCTB_paper}, there is no monochromatic triangle in this $m$-colored clique, so $k<R_{m}(3)$ by the definition of Ramsey numbers. In particular, for $m=2$ we get the bound $k\le5$. It follows that, defining
$$z_{\cP}=\max\{k:\exists\;\;\mbox{$\cP$-clique of size $k$}\},$$
we have $z_{\cP}<R_{m}(3)$.
The fact that, a.a.s., $z_{\cP}(\rm^{(r)}_{n})=z_{\cP}$  stems from the following  more general result, applied to $H_n=H$ being the largest $\cP$-clique possible, that is, a $\cP$-clique of size $z_{\cP}$.

\begin{prop}\label{genH}
Let $H_n$ be a sequence of ordered $r$-matchings of size  $|H_n|\le \frac{1}{2r}\left(\frac{n}{\log n}\right)^{\frac{1}{r}}$. Then, a.a.s., there is a copy of $H_n$ in~$\rm^{(r)}_{n}$.
\end{prop}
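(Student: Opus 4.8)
The plan is to use the first moment (second moment, in fact) method to show that a copy of $H_n$ appears a.a.s. Let $h=|H_n|$ and let $X$ be the number of copies of $H_n$ in $\rm^{(r)}_n$; more precisely, for each injection-up-to-isomorphism placement of $H_n$ on a vertex subset of $[rn]$, let $I$ be the indicator that this particular copy is present, and set $X=\sum I$. The number of ways to embed $H_n$ order-isomorphically is at least $\binom{rn}{rh}$ (choose the support) times $1$ (the order on the support then forces the isomorphism type), and the probability that a fixed set of $h$ disjoint $r$-edges are all simultaneously edges of $\rm^{(r)}_n$ is $\alpha^{(r)}_{n-h}/\alpha^{(r)}_n$. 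Hence
\[
\E X \ge \binom{rn}{rh}\cdot\frac{\alpha^{(r)}_{n-h}}{\alpha^{(r)}_n}
=\frac{(r!)^h}{(rh)!}\cdot\frac{n!}{(n-h)!}.
\]
Using $n!/(n-h)!\ge (n-h)^h\ge (n/2)^h$ for $h\le n/2$ and $(rh)!\le (rh)^{rh}$, one gets $\E X\ge\left(c\,n/h^r\right)^h$ for a suitable constant $c>0$ depending only on $r$. The hypothesis $h\le\frac{1}{2r}(n/\log n)^{1/r}$ guarantees $n/h^r\ge (2r)^r\log n$, so $\E X\to\infty$ (indeed superpolynomially).

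To upgrade $\E X\to\infty$ to "$X\ge1$ a.a.s." I would apply the second moment method, i.e. show $\E X^2/(\E X)^2\to1$, equivalently $\mathrm{Var}(X)=o((\E X)^2)$. Writing $X=\sum_\alpha I_\alpha$ over placements $\alpha$, one expands $\E X^2=\sum_{\alpha,\beta}\PP(I_\alpha=I_\beta=1)$ and separates the sum according to the number $t$ of shared vertices of the supports of $\alpha$ and $\beta$ (note only $t$ a multiple... actually $t$ can be anything from $0$ to $rh$). For $t=0$ the supports are disjoint and $\PP(I_\alpha=I_\beta=1)=\alpha^{(r)}_{n-2h}/\alpha^{(r)}_n$, which is very close to $(\E X/\text{(number of placements)})^2$, contributing essentially $(\E X)^2$. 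For $t\ge1$, one bounds the number of pairs sharing $\ge t$ vertices by $\binom{rn}{rh}\binom{rh}{t}\binom{rn}{rh-t}$ and the joint probability by $\alpha^{(r)}_{n-2h+\lceil t/r\rceil}/\alpha^{(r)}_n$ (sharing $t$ vertices means the union has at most $2h-\lceil t/r\rceil$ edges); the resulting contribution is $O\!\left((\E X)^2\cdot\text{poly}(h)\cdot (h^{r}/n)\right)$ summed appropriately, which is $o((\E X)^2)$ precisely because $h^r/n\le 1/((2r)^r\log n)\to0$. Therefore $X\ge1$ a.a.s.

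The main obstacle I expect is the bookkeeping in the second-moment estimate: controlling $\PP(I_\alpha=I_\beta=1)$ uniformly over all overlap patterns, since two copies can share vertices without sharing whole edges, and one must carefully count how much the overlap reduces the number of "free" edges in the union $\alpha\cup\beta$ (each shared vertex saves at most $1/r$ of an edge). An alternative that sidesteps the variance computation entirely is a direct greedy/switching argument: reveal the edges of $\rm^{(r)}_n$ one at a time and embed the edges of $H_n$ greedily, using \eqref{1edge} to show that at each of the $h$ steps the conditional probability of extending the partial embedding is $\ge 1-O(h^r/n)$, so the whole embedding succeeds with probability $\ge (1-O(h^r/n))^h=1-o(1)$; this is arguably cleaner and avoids the correlation analysis, at the cost of being slightly more delicate about conditioning. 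I would present whichever is shorter, most likely the greedy version, keeping the second-moment computation as a remark.
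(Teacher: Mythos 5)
Both routes you propose have genuine gaps, and the first one fails outright. For the second-moment method: the dangerous terms are pairs of copies whose vertex sets overlap in exactly one common edge (note that if both copies lie in the matching, any shared vertex forces a shared edge, so only whole-edge overlaps matter). Writing $h=|H_n|$, the number of placements $\beta$ sharing a given edge with a given placement $\alpha$ is of order $\binom{rn}{rh-r}$ up to polynomial factors, and there are $h$ choices of the shared edge, so the $j=1$ term contributes at least order $h\cdot\frac{h^r}{n^r}\cdot n^{r-1}=\frac{h^{r+1}}{n}$ times $(\E X)^2$ (your own displayed bound ``$\mathrm{poly}(h)\cdot(h^r/n)$'' already contains this factor $h$, which you then drop when concluding from $h^r/n\to0$). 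Since the hypothesis only gives $h^r\le\frac{1}{(2r)^r}\,n/\log n$, we have $h^{r+1}/n\approx h/\log n\to\infty$, so $\mathrm{Var}(X)=\omega((\E X)^2)$. A careful count for $r=2$ and $H_n=(AB)^h$ confirms this is not an artifact of crude bounding: the $j=1$ ratio is $\Theta(h^{5/2}/n)\to\infty$ in the stated regime. The fallback greedy argument has the same quantitative defect: a per-step failure probability $O(h^r/n)=O(1/\log n)$ over $h\gg\log n$ steps gives $(1-O(1/\log n))^h\to0$, not $1-o(1)$; one needs the per-step failure probability to be $o(1/h)$.

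The paper's proof avoids counting copies altogether. It partitions $[rn]$ into $rk$ consecutive blocks of length $t=2r(\log n)^{1/r}n^{1-1/r}$, assigns to each edge $e_i$ of $H_n$ the $r$-tuple of blocks indexed by the vertices of $e_i$, and invokes Lemma~\ref{lemma:span_prob}, which bounds the probability that $\rm^{(r)}_{n}$ has \emph{no} edge spanning a given $r$-tuple of $t$-sets by $\exp\{-t^r/((2r)^rn^{r-1})\}=\exp\{-\log n\}=1/n$. A union bound over the $k\le n^{1/r}$ edges of $H_n$ then shows that a.a.s.\ every block-tuple is spanned, and since the block-tuples of distinct edges are disjoint and the blocks are consecutive intervals, the spanning edges automatically form an order-isomorphic copy of $H_n$. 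The $\log n$ in the hypothesis is calibrated precisely so that the exponential bound of Lemma~\ref{lemma:span_prob} beats the union bound; your proposal does not engage with this, and without an exponential (rather than $O(h^r/n)$) control on the per-edge failure event, neither of your approaches closes.
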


For the proof we will need a lemma from \cite{JCTB_paper}. Recall that an edge $e$ spans sets $W_1,\dots,W_r$ if if $|e\cap W_i|=1$ for each $1\le i\le r$.
\begin{lemma}\label{lemma:span_prob} Given integers $r\ge2$, $t\ge2r$, and $n\ge t$,
let $W_1,\dots,W_r$ be disjoint subsets of $[rn]$ such that $|W_i|=t$, for each $1\le i \le r$. Then, the probability that no edge of $\rm^{(r)}_{n}$ spans all these sets is at most $\exp\left\{{-\frac{1}{(2r)^r} \cdot \frac{t^r}{n^{r-1}}}\right\}$. \qed
\end{lemma}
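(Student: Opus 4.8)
\textbf{Plan for the proof of Lemma~\ref{lemma:span_prob}.}
The plan is to estimate the probability that \emph{no} edge of $\rm^{(r)}_{n}$ spans $W_1,\dots,W_r$ by exposing a large matching inside the Cartesian product $W_1\times\cdots\times W_r$ and using the permutational scheme to gain independence. First I would form a collection $\cE$ of $\lfloor t/r\rfloor\ge t/(2r)$ \emph{pairwise disjoint} potential edges $e_1,\dots,e_m$, each spanning $W_1,\dots,W_r$: simply split each $W_i$ into $\lfloor t/r\rfloor$ groups of size (at least) $r$ and take one vertex from the $\ell$-th group of each $W_i$ to form $e_\ell$. The point of disjointness is that the events $\{e_\ell\in\rm^{(r)}_{n}\}$, while not independent, are \emph{negatively correlated} enough for the standard estimate to go through.

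The key step is to bound $\PP(e_1\notin\rm^{(r)}_{n},\dots,e_m\notin\rm^{(r)}_{n})$. I would do this by revealing the edges of $\rm^{(r)}_{n}$ one potential edge at a time. Using~\eqref{1edge}, the probability that $e_1\notin\rm^{(r)}_{n}$ is $1-1/\binom{rn-1}{r-1}$. Conditioning on $e_1,\dots,e_{\ell-1}$ all being non-edges and on the (partial) matching structure revealed so far, the conditional probability that $e_\ell$ is an edge is still at least $1/\binom{rn-1}{r-1}$, because $e_\ell$ is disjoint from $e_1,\dots,e_{\ell-1}$ and conditioning on a set of vertices being covered by ``other'' edges only makes the remaining vertices (including those of $e_\ell$) \emph{more} likely to be matched together; a clean way to see this is to run the permutational scheme and expose where the vertices of $e_\ell$ land. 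Hence
\[
\PP\bigl(\text{no edge of $\rm^{(r)}_{n}$ spans }W_1,\dots,W_r\bigr)\le\PP\Bigl(\bigcap_{\ell=1}^m\{e_\ell\notin\rm^{(r)}_{n}\}\Bigr)\le\left(1-\frac{1}{\binom{rn-1}{r-1}}\right)^m.
\]
Using $\binom{rn-1}{r-1}\le (rn)^{r-1}/(r-1)!\le (rn)^{r-1}$ and $1-x\le e^{-x}$, the right-hand side is at most $\exp\{-m/(rn)^{r-1}\}$, and with $m\ge t/(2r)$ and $t\ge 2r$ one gets an exponent of the order $-t/(2r)\cdot (rn)^{-(r-1)}$. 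A short calculation, absorbing the constant $r$-powers, yields the claimed bound $\exp\{-\frac{1}{(2r)^r}\cdot t^r/n^{r-1}\}$ — here the extra factor $t^{r-1}$ in the numerator must come from using \emph{all} of $W_1,\dots,W_r$ rather than just a single disjoint family, so in fact I would not take just $\lfloor t/r\rfloor$ disjoint edges but rather iterate: after the disjoint family of size $\asymp t/r$ is exhausted, re-partition and extract another disjoint family, repeating $\asymp t^{r-1}$ times to use up all of $W_1\times\cdots\times W_r$, each time getting a fresh multiplicative factor $\exp\{-\Omega(t/(r\cdot n^{r-1}))\}$ from the conditional argument.

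\textbf{Main obstacle.} The delicate point is the conditioning step: justifying rigorously that conditioning on $e_1,\dots,e_{\ell-1}$ being non-edges does not decrease the chance that $e_\ell$ is an edge, and that this persists across the $\asymp t^{r-1}$ rounds of re-partitioning so that the exponents genuinely add up to $\Omega(t^r/n^{r-1})$. I expect the cleanest route is to avoid conditional-probability gymnastics entirely and instead work directly in the permutational model: expose the random permutation $\pi$ of $[rn]$ block by block, and at each step lower-bound the probability that the next block of $r$ vertices is a spanning edge by a quantity uniformly $\ge (1-o(1))/(rn)^{r-1}$ as long as not too many spanning edges have been used — a greedy/union-bound argument then gives a clean product bound. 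The arithmetic to land exactly on the constant $1/(2r)^r$ is routine once this structural claim is in place, so I would not belabor it.
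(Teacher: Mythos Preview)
Your core correlation claim is false. For pairwise disjoint $r$-sets $e_1,\dots,e_m$, the events $\{e_\ell\notin\rm_n^{(r)}\}$ are \emph{positively} correlated, not negatively: take $r=n=2$, $e_1=\{1,2\}$, $e_2=\{3,4\}$; then $\PP(e_1\notin\rm,\,e_2\notin\rm)=\tfrac23>(\tfrac23)^2$, since $e_1\in\rm\iff e_2\in\rm$. Your heuristic (``conditioning on vertices being covered by other edges makes the remaining vertices more likely to be matched together'') confuses the event ``$e_i$ is not an edge'' with the strictly stronger event ``every vertex of $e_i$ is matched outside $e_i$''. So the product bound $\PP(\bigcap_\ell\{e_\ell\notin\rm_n^{(r)}\})\le(1-1/\binom{rn-1}{r-1})^m$ fails already for the first batch. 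Even granting it, the exponent you get is $\Theta(t/n^{r-1})$, short by $t^{r-1}$; your ``re-partition and iterate $\asymp t^{r-1}$ times'' cannot close this gap, because once batches overlap no correlation inequality is available. Your fallback of exposing $\pi$ block by block also misfires: a uniformly random block spans with probability $\Theta(t^r/(rn)^r)$, not $\Theta(1/(rn)^{r-1})$, and only the first $O(t/r)$ blocks can be processed before some $W_i$ is potentially exhausted, again yielding the wrong exponent.

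The right idea (the paper quotes this lemma from~\cite{JCTB_paper} without proof, but the argument there is standard) reverses the roles: do not fix potential spanning edges and test membership; instead expose the edges of $\rm_n^{(r)}$ \emph{through $W_1$}. Process $v_1,\dots,v_t\in W_1$ in order; whenever $v_j$ is still unmatched, reveal its matching edge, whose remaining $r-1$ vertices form a uniform $(r-1)$-subset of the as-yet-unmatched vertices. After $\ell-1$ such exposures each $W_i$ ($i\ge2$) has lost at most $(\ell-1)(r-1)$ vertices, so for $\ell\le\lfloor t/(2(r-1))\rfloor$ each still has $\ge t/2$ available, and the conditional probability the exposed edge spans is at least $(t/2)^{r-1}/\binom{rn-1}{r-1}\ge(r-1)!\,(t/2)^{r-1}/(rn)^{r-1}$. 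Taking the product of one minus this over $\Theta(t/r)$ exposures yields an exponent of order $\frac{t}{r}\cdot\frac{(t/2)^{r-1}}{(rn)^{r-1}}$, and routine arithmetic using $t\ge 2r$ gives the stated constant $1/(2r)^r$. The missing factor $t^{r-1}$ now appears as $(t/2)^{r-1}$ in each step's success probability, and the conditioning is on previously \emph{revealed matching edges} --- which leaves a clean uniform model for the next exposure --- rather than on non-membership of fixed potential edges.
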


\begin{proof}[Proof of Proposition~\ref{genH}]
Let $t:=2r(\log n)^{\frac{1}{r}} n^{1-\frac{1}{r}}$. By adding some extra edges, we may assume that $H:=H_n$ is a matching of size \emph{exactly} $k:=n/t=\frac{1}{2r}\left(\frac{n}{\log n}\right)^{\frac{1}{r}}$, where, for simplicity, $t$ and $k$ are both deemed to be integers. We divide $[rn]$ into $rk$ consecutive blocks $B_1,\dots,B_{rk}$, each of length $t$. So,  $B_1=[t]$,  $B_2=\{t+1,\dots,2t\}$, etc.

We represent $H$ as a matching on vertex set $\{v_1,\dots,v_{rk}\}$, ordered consistently with the indices, disjoint from $[rn]$ and with edges $e_1,\dots,e_k$. %We may think of the sets $B_j$ as $t$-blow-ups of the vertices $v_j$ of $H$.
For every $i=1,\dots,k$, let $I_i$ be the indicator random variable equal to 1 if there is \emph{no} edge in $\rm^{(r)}_{n}$ spanning the sets $B_j$, for all $j$ such that $v_j\in e_i$, and equal to 0 otherwise (see Figure \ref{K}). Further, let $X = \sum_{i=1}^k I_i$. We will show that, a.a.s., $X=0$, which implies that there is a copy of $H$ in $\rm^{(r)}_{n}$.

\begin{figure}[ht]
\captionsetup[subfigure]{labelformat=empty}
\begin{center}

\scalebox{1}
{
\centering
\begin{tikzpicture}
[line width = .5pt,
vtx/.style={circle,draw,black,very thick,fill=black, line width = 1pt, inner sep=0pt},
]

    % RM
    \node[vtx] (1) at (0,0) {};
    \node[vtx] (2) at (1,0) {};
    \node[vtx] (3) at (2,0) {};
    \node[vtx] (4) at (3,0) {};
    \node[vtx] (5) at (4,0) {};
    \node[vtx] (6) at (5,0) {};
    \node[vtx] (7) at (6,0) {};
    \node[vtx] (8) at (7,0) {};
    \node[vtx] (9) at (8,0) {};
    \node[vtx] (10) at (8.8,0) {};

    \draw[line width=2.5mm, color=gray, line cap=round]  (1) -- (0.8,0) node[pos=0.5, below] {$B_1$};
    \draw[line width=2.5mm, color=gray, line cap=round]  (2) -- (1.8,0) node[pos=0.5, below] {$B_2$};
    \draw[line width=2.5mm, color=gray, line cap=round]  (3) -- (2.8,0) node[pos=0.5, below] {$B_3$};
    \draw[line width=2.5mm, color=gray, line cap=round]  (4) -- (3.8,0) node[pos=0.5, below] {$B_4$};
    \draw[line width=2.5mm, color=gray, line cap=round]  (5) -- (4.8,0) node[pos=0.5, below] {$B_5$};
    \draw[line width=2.5mm, color=gray, line cap=round]  (6) -- (5.8,0) node[pos=0.5, below] {$B_6$};
    \draw[line width=2.5mm, color=gray, line cap=round]  (7) -- (6.8,0) node[pos=0.5, below] {$B_7$};
    \draw[line width=2.5mm, color=gray, line cap=round]  (8) -- (7.8,0) node[pos=0.5, below] {$B_8$};
    \draw[line width=2.5mm, color=gray, line cap=round]  (9) -- (8.8,0) node[pos=0.5, below] {$B_9$};

    \draw[line width=0.3mm, color=lightgray]  (1) -- (10);
    \fill[fill=black, outer sep=-0.2mm]   (1) circle (0.1) node [below left] {$1$};
    \fill[fill=black, outer sep=-0.2mm]   (10) circle (0.1) node [below right] {$3n$};

    \node[vtx] (v1) at (0.75,0) {};
    \node[vtx] (v2) at (1.5,0) {};
    \node[vtx] (v3) at (2.5,0) {};
    \node[vtx] (v4) at (3.25,0) {};
    \node[vtx] (v5) at (4.3,0) {};
    \node[vtx] (v6) at (5.8,0) {};
    \node[vtx] (v7) at (6.4,0) {};
    \node[vtx] (v8) at (7.6,0) {};
    \node[vtx] (v9) at (8.5,0) {};
    \coordinate (v47) at (4.825,1.25);
    \coordinate (v36) at (4.15,1);
    \coordinate (v68) at (6.7,1);
    \coordinate (v25) at (2.9,1.12);
    \coordinate (v59) at (6.4,1.12);

    \node[color=black] at (-1.5,0) {$\rm^{(3)}_{n}\!\!:$};

    \draw[line width=0.5mm, color=blue, outer sep=2mm] (v4) arc (0:180:1.25);
    \draw[line width=0.5mm, color=blue, outer sep=2mm] plot [smooth, tension=2] coordinates {(v4) (v47) (v7)};
    \draw[line width=0.5mm, color=blue, outer sep=2mm] plot [smooth, tension=2] coordinates {(v3) (v36) (v6)};
    \draw[line width=0.5mm, color=blue, outer sep=2mm] plot [smooth, tension=2] coordinates {(v6) (v68) (v8)}; 
    \draw[line width=0.5mm, color=blue, outer sep=2mm] plot [smooth, tension=2] coordinates {(v2) (v25) (v5)}; 
    \draw[line width=0.5mm, color=blue, outer sep=2mm] plot [smooth, tension=2] coordinates {(v5) (v59) (v9)}; 

    \fill[fill=black, outer sep=1mm]   (v1) circle (0.1);
    \fill[fill=black, outer sep=1mm]   (v2) circle (0.1);
    \fill[fill=black, outer sep=1mm]   (v3) circle (0.1);
    \fill[fill=black, outer sep=1mm]   (v4) circle (0.1);
    \fill[fill=black, outer sep=1mm]   (v5) circle (0.1);
    \fill[fill=black, outer sep=1mm]   (v6) circle (0.1);
    \fill[fill=black, outer sep=1mm]   (v7) circle (0.1);
    \fill[fill=black, outer sep=1mm]   (v8) circle (0.1);
    \fill[fill=black, outer sep=1mm]   (v9) circle (0.1);

   %K
    \node[vtx] (k1) at (0.5,2.5) {};
    \node[vtx] (k2) at (1.5,2.5) {};
    \node[vtx] (k3) at (2.5,2.5) {};
    \node[vtx] (k4) at (3.5,2.5) {};
    \node[vtx] (k5) at (4.5,2.5) {};
    \node[vtx] (k6) at (5.5,2.5) {};
    \node[vtx] (k7) at (6.5,2.5) {};
    \node[vtx] (k8) at (7.5,2.5) {};
    \node[vtx] (k9) at (8.5,2.5) {};
    \coordinate (k25) at (3,3.75) {};
    \coordinate (k59) at (6.5,3.75) {};
    \coordinate (k45) at (4,3.5) {};
    \coordinate (k78) at (6.5,3.5) {};

    \draw[line width=0.3mm, color=lightgray]  (k1) -- (k9);

    \draw[line width=0.5mm, color=blue, outer sep=2mm] (k4) arc (0:180:1.5);
    \draw[line width=0.5mm, color=blue, outer sep=2mm] (k7) arc (0:180:1.5);
    \draw[line width=0.5mm, color=blue, outer sep=2mm] plot [smooth, tension=2] coordinates {(k2) (k25) (k5)};
    \draw[line width=0.5mm, color=blue, outer sep=2mm] plot [smooth, tension=2] coordinates {(k5) (k59) (k9)};
    \draw[line width=0.5mm, color=blue, outer sep=2mm] plot [smooth, tension=2] coordinates {(k3) (k45) (k6)};
    \draw[line width=0.5mm, color=blue, outer sep=2mm] plot [smooth, tension=2] coordinates {(k6) (k78) (k8)};

   \fill[fill=black, outer sep=1mm]   (k1) circle (0.1) node [below] {$v_1$};
   \fill[fill=black, outer sep=1mm]   (k2) circle (0.1) node [below] {$v_2$};
   \fill[fill=black, outer sep=1mm]   (k3) circle (0.1) node [below] {$v_3$};
   \fill[fill=black, outer sep=1mm]   (k4) circle (0.1) node [below] {$v_4$};
   \fill[fill=black, outer sep=1mm]   (k5) circle (0.1) node [below] {$v_5$};
   \fill[fill=black, outer sep=1mm]   (k6) circle (0.1) node [below] {$v_6$};
   \fill[fill=black, outer sep=1mm]   (k7) circle (0.1) node [below] {$v_7$};
   \fill[fill=black, outer sep=1mm]   (k8) circle (0.1) node [below] {$v_8$};
   \fill[fill=black, outer sep=1mm]   (k9) circle (0.1) node [below] {$v_9$};

    \node[color=black] at (-1.15,2.5) {$H\!\!:$};

\end{tikzpicture}
}

\end{center}

\caption{A matching $H$ of size 3 and its copy in $\rm^{(3)}_{n}$.}
\label{K}
			
\end{figure}
To this end, observe that for each $i$, by Lemma~\ref{lemma:span_prob} applied to the sets $B_j$, $v_j\in e_i$,	
\[
\PP(I_i=1)\le\exp\left\{-\frac{1}{(2r)^r} \cdot \frac{t^r}{n^{r-1}}\right\}=\exp\{-\log n\}.
\]
Finally, by Markov's inequality and the linearity of expectation,
\[
\PP(X\ge 1)\le
\E X = \sum_{i=1}^k \PP(I_i=1)\le n^{1/r}\exp\{-\log n\}=o(1),
\]
finishing the proof.	
\end{proof}

 One can get rid of the $\log n$ factor in Proposition~\ref{genH} if instead of an arbitrary matching~$H_n$ one is after a member of a hereditary family of matchings, a prominent example of which are $\mathcal P$-cliques. Indeed, by applying essentially the same easy  trick as in \cite[Remark 3.5]{JCTB_paper}, one can alter the final part of the proof of Proposition~\ref{genH} and get, a.a.s., $X\ge k/2$. This yields the following result.

\begin{prop}\label{herH} Let $\mathcal H$ be a family of matchings such that for every $k$ there is $H\in\mathcal H$ of size $|H|=k$ and whenever $H\in\mathcal H$, then for every $H'\subset H$ also $H'\in\mathcal H$.
 Then, a.a.s., $\rm^{(r)}_{n}$ contains a member of $\mathcal H$ of size $\Omega(n^{1/r}/\omega(n))$, where $\omega(n)\to\infty$ arbitrarily slowly. \qed
\end{prop}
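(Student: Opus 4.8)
\textbf{Proof proposal for Proposition~\ref{herH}.}
The plan is to mimic the proof of Proposition~\ref{genH} but, instead of asking the \emph{whole} matching $H$ to embed, to settle for embedding a large sub-matching of it, which is exactly where the hereditary property of $\mathcal H$ enters. First I would set up the same scaffolding: given $\omega(n)\to\infty$, pick a block length $t$ slightly larger than before, namely $t:=2r\,\omega(n)^{1/r}\,n^{1-1/r}$, and correspondingly $k:=n/t=\tfrac1{2r}\big(n/\omega(n)\big)^{1/r}$ (again pretending $t,k$ are integers). Take $H\in\mathcal H$ of size $k$, represent it on the abstract vertex set $\{v_1,\dots,v_{rk}\}$ with edges $e_1,\dots,e_k$, and chop $[rn]$ into $rk$ consecutive blocks $B_1,\dots,B_{rk}$ each of length $t$. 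As before, for each $i=1,\dots,k$ let $I_i$ be the indicator that \emph{no} edge of $\rm^{(r)}_n$ spans the $r$ blocks $B_j$ with $v_j\in e_i$, and set $X=\sum_{i=1}^k I_i$; now $k-X$ counts the edges $e_i$ that \emph{do} find a spanning edge in $\rm^{(r)}_n$.

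The key point is that the $k-X$ ``successful'' edges can be realized \emph{simultaneously}: since the blocks $B_j$ are pairwise disjoint and each successful $e_i$ uses its own $r$ blocks, for each successful $i$ we may pick one spanning edge of $\rm^{(r)}_n$, and these chosen edges are automatically pairwise disjoint and form a sub-matching order-isomorphic to the sub-matching $H[\{e_i: I_i=0\}]$ of $H$. By the hereditary assumption this sub-matching lies in $\mathcal H$, so $\rm^{(r)}_n$ contains a member of $\mathcal H$ of size $k-X$. It therefore suffices to show a.a.s.\ $X\le k/2$, which by Markov's inequality follows from $\E X\le k/4$ (say). By Lemma~\ref{lemma:span_prob} applied to the $r$ blocks of $e_i$ (each of size $t\ge 2r$ for $n$ large),
\[
\PP(I_i=1)\le \exp\!\left\{-\frac{1}{(2r)^r}\cdot\frac{t^r}{n^{r-1}}\right\}=\exp\{-\omega(n)\},
\]
so $\E X=\sum_{i=1}^k\PP(I_i=1)\le k\exp\{-\omega(n)\}=o(k)$, which is certainly $\le k/4$ eventually. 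Hence a.a.s.\ $k-X\ge k/2=\Omega\big(n^{1/r}/\omega(n)\big)$, as claimed.

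The only mild subtlety — and the step I would write out carefully — is the simultaneous-realization observation: one must note that the definition of $I_i$ refers only to the blocks of $e_i$, that distinct $e_i$'s have disjoint block-sets, and hence the ``spanning edge'' witnesses for different successful $i$ can be chosen as genuinely distinct, vertex-disjoint edges of $\rm^{(r)}_n$, so that they constitute an actual embedded sub-matching (this is precisely the trick of \cite[Remark 3.5]{JCTB_paper}). Everything else is a routine repetition of the proof of Proposition~\ref{genH} with $\log n$ replaced by $\omega(n)$. Note also that the value of $k$ here is $\Theta\big((n/\omega(n))^{1/r}\big)$, which is $\Omega(n^{1/r}/\omega'(n))$ for a suitable $\omega'(n)\to\infty$, matching the stated conclusion after renaming the slowly growing function.
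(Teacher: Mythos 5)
Your proposal is correct and is exactly the argument the paper has in mind: rerun the proof of Proposition~\ref{genH} with $\log n$ replaced by $\omega(n)$ (so $k=\Theta((n/\omega(n))^{1/r})$ and $\PP(I_i=1)\le e^{-\omega(n)}$), use Markov to get a.a.s.\ at most $k/2$ failures, and invoke heredity plus the disjointness of the block-sets to realize the successful edges simultaneously as a sub-matching order-isomorphic to a member of $\mathcal H$. The simultaneous-realization step you flag is precisely the ``easy trick'' from \cite[Remark 3.5]{JCTB_paper} that the paper cites, and your closing remark about renaming the slowly growing function correctly reconciles $(n/\omega(n))^{1/r}$ with the stated bound.
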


In fact, one can even get rid of the $\omega(n)$ factor by applying Talagrand's inequality in the same way as in the proof of \cite[Theorem 1.8]{JCTB_paper}.

%In particular, if $\mathcal H$ is the family of all $\mathcal P$-cliques, where $\mathcal P$ is a set of $r$-patterns containing at least one collectable pattern, then  we get a.a.s.\ $z_{\mathcal P}(\rm^{(r)}_{n})=\Omega(n^{1/r})$.

%The, still to be proved, first case of Theorem~\ref{thm:random_two_patterns_gen} increases this threshold even further. As, recall, it follows from the lower bound in Theorem~\ref{fullhouse}, we now turn to the proof of that result.

%Since the lower bound for the third case in Theorem~\ref{thm:random_two_patterns_gen} follows by Theorem~\ref{thm:random_one_pattern},
%we turn towards the remaining first case. This follows by the lower bound from Theorem~\ref{fullhouse}.
%Since the corresponding proof exploits slightly complex notation, we prelude it with a presntat

%

%\subsection{Complements of special full houses} Here we prove the lower bound in Proposition~\ref{thm:random_three_patterns_stronger}.

\section{ Cubes - the lower bound}\label{fhlb}
In this section we prove the lower bound in Theorem~\ref{fullhouse}. We begin with some preparations.

\subsection{Preparations}

Consider a collectable $r$-pattern $P$ with splitting $P=S_1\cdots S_s$ into blocks of sizes $|S_i|=2\lambda_i$, $i=1,\dots,s$.
Let $\pi$ be a partition $[s]=T_0\cup T_1\cup\cdots \cup T_t$, with $1\in T_0$ and all $T_j\neq\emptyset$, $j=1,\dots,t$.
The pair $(P,\pi)$ will be called the \emph{seed} of the $t$-cube $\cC(P,\pi)$. For $j=0,\dots,t$, set $w(T_j):=r_j:=\sum_{i\in T_j}\lambda_i$ and recall that the mega-block $P_j$ is the $r_j$-pattern formed by concatenation of segments $S_i$ with $i\in T_j$. As one may have $r_j=1$, we allow ``degenerate'' 1-patterns $AB$.

For $k\ge2$, define an auxiliary hypergraph $H_k(P,\pi)$ on $kr$ vertices as follows.
Let $V$ be a linearly ordered set of $kr$ vertices split into consecutive blocks $V_i$ of sizes $k\lambda_i$, that is, $V=\bigcup_{i=1}^s V_i$ and $|V_i|=k\lambda_i$, $i=1,\dots,s$. Further, for $j=0,\dots,t$, set $U_j=\bigcup_{i\in T_j}V_i$ and let $K_j$ be the (unique) $P_j$-clique of size $k$ on $U_j$ (for $r_j=1$, $K_j$ is a 1-matching consisting of $k$ isolated vertices marked by distinct letters). Finally,
$$H_k(P,\pi)=K_0\times K_1\times\cdots \times K_t,$$
where the product of matchings was defined in Section~\ref{defop}. Note that, equivalently,
$$H_k(P,\pi)=\bigcup\left\{K: \mbox{$K$ is a $\cC(P,\pi)$-clique on $[kr]$}\right\},$$
and that, by~\eqref{edges}, $|H_k(P,\pi)|=k^{t+1}$.
%$$|H_k(P,\pi)|=\frac{k\cdot a_{\cC(P,\pi)}(k)}{(k-1)!^t}=\frac{k\cdot k!^t}{(k-1)!^t}=k^{t+1},$$ the middle equality following from the proof of the upper bound in Theorem~\ref{fullhouse} in Section~\ref{generic} and the last one by observing that a given edge belongs to $(k-1)!^t$ $\cC(P,\pi)$-cliques.

We will treat this hypergraph as \emph{ordered} or \emph{unordered}, depending on the context.
Note that, by definition, for every $Q\in \cC(P,\pi)$ we have $H_k(P,\pi)=H_k(Q,\pi)$.

One instance of $H_2(P,\pi)$ can be found in Example~\ref{product} in Section~\ref{defop}. Here is another one.

\begin{exmp}\label{E7}
For
$P=ABBA|ABAB$ and $Q=ABBA|BABA$, with $\lambda_P=(1,1,1,1)$ and $\pi=\{T_0,T_1\}$, where $T_0=\{1,2\}$ and $T_1=\{3,4\}$, we have $\cC(P,\pi)=\cC(Q,\pi)=\{P,Q\}$. Moreover, for $P$ we have $K_0=ABBA$ and $K_1=ABAB$, while for $Q$ we have $K_0=ABBA$ and $K_1=BABA$.
The hypergraph $H_3(P,\pi)=H_3(Q,\pi)$ on the vertex set
$V=U_0\cup U_1=V_1\cup V_2\cup V_3\cup V_4$, where $U_0=V_1\cup V_2$, $U_1=V_3\cup V_4$, and
$$V_1=\{1,2,3\},\quad V_2=\{4,5,6\},\quad V_3=\{7,8,9\},\quad V_4=\{10,11,12\},$$
is presented in Figure~\ref{fig:HPpi}. It has $3\times 3 =9$ edges, each of which is a union of an edge of $K_0$ and an
edge of $K_1$.
\end{exmp}

\begin{figure}[ht]
\captionsetup[subfigure]{labelformat=empty}
\begin{center}

\scalebox{1}
{
\centering
\begin{tikzpicture}
[line width = .5pt,
vtx/.style={circle,draw,red,very thick,fill=red, line width = 1pt, inner sep=0pt},
]

    \node[vtx] (1) at (0,0) {};
    \node[vtx] (13) at (11.8,0) {};

    \draw[line width=0.3mm, color=black]  (1) -- (13);

    \node[vtx] (1a) at (0+0.5,0) {};
    \node[vtx] (1b) at (1+0.5,0) {};
    \node[vtx] (1c) at (2+0.5,0) {};

    \node[vtx] (2a) at (3+0.5,0) {};
    \node[vtx] (2b) at (4+0.5,0) {};
    \node[vtx] (2c) at (5+0.5,0) {};

    \node[vtx] (3a) at (6+0.5,0) {};
    \node[vtx] (3b) at (7+0.5,0) {};
    \node[vtx] (3c) at (8+0.5,0) {};

    \node[vtx] (4a) at (9+0.5,0) {};
    \node[vtx] (4b) at (10+0.5,0) {};
    \node[vtx] (4c) at (11+0.5,0) {};

    \draw[line width=0.5mm, color=red, outer sep=2mm] (2c) arc (0:180:2.5);
    \draw[line width=0.5mm, color=red, outer sep=2mm] (2b) arc (0:180:1.5);
    \draw[line width=0.5mm, color=red, outer sep=2mm] (2a) arc (0:180:0.5);

    \draw[line width=0.5mm, color=red, outer sep=2mm] (4c) arc (0:180:1.5);
    \draw[line width=0.5mm, color=red, outer sep=2mm] (4b) arc (0:180:1.5);
    \draw[line width=0.5mm, color=red, outer sep=2mm] (4a) arc (0:180:1.5);

    \draw[line width=0.5mm, color=blue, outer sep=2mm] (3a) arc (0:180:0.5);
    \draw[line width=0.5mm, color=blue, outer sep=2mm] (3a) arc (0:180:1.0);
    \draw[line width=0.5mm, color=blue, outer sep=2mm] (3a) arc (0:180:1.5);

    \draw[line width=0.5mm, color=blue, outer sep=2mm] (3b) arc (0:180:1.0);
    \draw[line width=0.5mm, color=blue, outer sep=2mm] (3b) arc (0:180:1.5);
    \draw[line width=0.5mm, color=blue, outer sep=2mm] (3b) arc (0:180:2.0);

    \draw[line width=0.5mm, color=blue, outer sep=2mm] (3c) arc (0:180:1.5);
    \draw[line width=0.5mm, color=blue, outer sep=2mm] (3c) arc (0:180:2.0);
    \draw[line width=0.5mm, color=blue, outer sep=2mm] (3c) arc (0:180:2.5);

    \fill[fill=black, outer sep=1mm]   (1a) circle (0.1);
    \fill[fill=black, outer sep=1mm]   (1b) circle (0.1);
    \fill[fill=black, outer sep=1mm]   (1c) circle (0.1);

    \fill[fill=black, outer sep=1mm]   (2a) circle (0.1);
    \fill[fill=black, outer sep=1mm]   (2b) circle (0.1);
    \fill[fill=black, outer sep=1mm]   (2c) circle (0.1);

    \fill[fill=black, outer sep=1mm]   (3a) circle (0.1);
    \fill[fill=black, outer sep=1mm]   (3b) circle (0.1);
    \fill[fill=black, outer sep=1mm]   (3c) circle (0.1);

    \fill[fill=black, outer sep=1mm]   (4a) circle (0.1);
    \fill[fill=black, outer sep=1mm]   (4b) circle (0.1);
    \fill[fill=black, outer sep=1mm]   (4c) circle (0.1);

    \node[color=black] at (0+0.5,-0.5) {$1$};
    \node[color=black] at (1+0.5,-0.5) {$2$};
    \node[color=black] at (2+0.5,-0.5) {$3$};
    \node[color=black] at (3+0.5,-0.5) {$4$};
    \node[color=black] at (4+0.5,-0.5) {$5$};
    \node[color=black] at (5+0.5,-0.5) {$6$};
    \node[color=black] at (6+0.5,-0.5) {$7$};
    \node[color=black] at (7+0.5,-0.5) {$8$};
    \node[color=black] at (8+0.5,-0.5) {$9$};
    \node[color=black] at (9+0.5,-0.5) {$10$};
    \node[color=black] at (10+0.5,-0.5) {$11$};
    \node[color=black] at (11+0.5,-0.5) {$12$};

    \node[color=red] at (0+0.5,2.0) {$K_0$};
    \node[color=red] at (9+0.5,2.0) {$K_1$};

\end{tikzpicture}
}

\end{center}

\caption{An exemplary $H_3(P,\pi)$ with $P=ABBAABAB$  and $\pi=\{\{1,2\},\{3,4\}\}$. Each of its nine $4$-vertex edges is depicted by a red-blue-red path.}
\label{fig:HPpi}
			
\end{figure}	

In fact, the hypergraph $H_k(P,\pi)$ is much more universal.

 \begin{observation}\label{universal}
 Let $P$ and $P'$ be two collectable $r$-patterns with $\lambda_{P}=s$ and $\lambda_{P'}=s'$, and let $\pi$ and $\pi'$ be, respectively, partitions $[s]=T_0\cup \dots\cup T_t$ and $[s']=T_0'\cup \dots\cup T_t'$ into the \emph{same} number of sets and such that $1\in T_0\cap T_0'$ and $w(T_j')=w(T_j)$ ($=r_j$), for all $j=0,\dots,t$. Then  $H_k(P,\pi)$ and $H_k(P',\pi')$, are isomorphic (but not necessarily \emph{order} isomorphic). \qed
 \end{observation}

  This observation is quite obvious. Just recall that both, $H_k(P,\pi)$  and $H_k(P',\pi')$ are products of disjoint $r_j$-matchings of size $k$, $j=0,\dots,t$. So, it suffices to independently define isomorphisms between $K_j$ and $K_j'$, $j=0,\dots,t$.

\begin{exmp}\label{E8}
Let us illustrate Observation~\ref{universal} by taking the seed $(P,\pi)$ and in Example~\ref{E7} and a new seed $(P',\pi')$, where $P'=AABBBAAB$, $T'_0=\{1\}$, and $T'_1=\{2,3\}$ (note that $s'=3$). We have $K_0'=AABB$ and $K_1'=BAAB$. Hypergraph $H_k(P,\pi)$ is presented in Figure~\ref{fig:HPpi}, while $H_k(P',\pi')$ in Figure~\ref{fig:HPpi_prime}, with
  $V_1'=\{1',\dots,6'\}$, $V_2'=\{7',8',9'\}$, and $V_3'=\{10',11'12'\}$. They are visibly isomorphic (though, not order isomorphic) to each other. As one of many existing isomorphisms,  map $V_1$ onto $\{1',3',5'\}$ (in any order), and $V_4$ onto $\{10',11',12'\}$ (in any order).
   Then, the mappings between $V_2$ and $\{2',4',6'\}$ and between $V_3$ and $\{8',9',10'\}$ are uniquely
    determined.
\end{exmp}

\begin{figure}[ht]
\captionsetup[subfigure]{labelformat=empty}
\begin{center}

\scalebox{1}
{
\centering
\begin{tikzpicture}
[line width = .5pt,
vtx/.style={circle,draw,red,very thick,fill=red, line width = 1pt, inner sep=0pt},
]

    \node[vtx] (1) at (0,0) {};
    \node[vtx] (13) at (11.8,0) {};

    \draw[line width=0.3mm, color=black]  (1) -- (13);

    \node[vtx] (1a) at (0+0.5,0) {};
    \node[vtx] (1b) at (1+0.5,0) {};
    \node[vtx] (1c) at (2+0.5,0) {};

    \node[vtx] (2a) at (3+0.5,0) {};
    \node[vtx] (2b) at (4+0.5,0) {};
    \node[vtx] (2c) at (5+0.5,0) {};

    \node[vtx] (3a) at (6+0.5,0) {};
    \node[vtx] (3b) at (7+0.5,0) {};
    \node[vtx] (3c) at (8+0.5,0) {};

    \node[vtx] (4a) at (9+0.5,0) {};
    \node[vtx] (4b) at (10+0.5,0) {};
    \node[vtx] (4c) at (11+0.5,0) {};

    \draw[line width=0.5mm, color=red, outer sep=2mm] (2c) arc (0:180:0.5);
    \draw[line width=0.5mm, color=red, outer sep=2mm] (2a) arc (0:180:0.5);
    \draw[line width=0.5mm, color=red, outer sep=2mm] (1b) arc (0:180:0.5);

    \draw[line width=0.5mm, color=red, outer sep=2mm] (4c) arc (0:180:2.5);
    \draw[line width=0.5mm, color=red, outer sep=2mm] (4b) arc (0:180:1.5);
    \draw[line width=0.5mm, color=red, outer sep=2mm] (4a) arc (0:180:0.5);

    \draw[line width=0.5mm, color=blue, outer sep=2mm] (3a) arc (0:180:2.5);
    \draw[line width=0.5mm, color=blue, outer sep=2mm] (3b) arc (0:180:3);
    \draw[line width=0.5mm, color=blue, outer sep=2mm] (3c) arc (0:180:3.5);

    \draw[line width=0.5mm, color=blue, outer sep=2mm] (3a) arc (0:180:1.5);
    \draw[line width=0.5mm, color=blue, outer sep=2mm] (3b) arc (0:180:2);
    \draw[line width=0.5mm, color=blue, outer sep=2mm] (3c) arc (0:180:2.5);

    \draw[line width=0.5mm, color=blue, outer sep=2mm] (3a) arc (0:180:0.5);
    \draw[line width=0.5mm, color=blue, outer sep=2mm] (3b) arc (0:180:1);
    \draw[line width=0.5mm, color=blue, outer sep=2mm] (3c) arc (0:180:1.5);

    \fill[fill=black, outer sep=1mm]   (1a) circle (0.1);
    \fill[fill=black, outer sep=1mm]   (1b) circle (0.1);
    \fill[fill=black, outer sep=1mm]   (1c) circle (0.1);

    \fill[fill=black, outer sep=1mm]   (2a) circle (0.1);
    \fill[fill=black, outer sep=1mm]   (2b) circle (0.1);
    \fill[fill=black, outer sep=1mm]   (2c) circle (0.1);

    \fill[fill=black, outer sep=1mm]   (3a) circle (0.1);
    \fill[fill=black, outer sep=1mm]   (3b) circle (0.1);
    \fill[fill=black, outer sep=1mm]   (3c) circle (0.1);

    \fill[fill=black, outer sep=1mm]   (4a) circle (0.1);
    \fill[fill=black, outer sep=1mm]   (4b) circle (0.1);
    \fill[fill=black, outer sep=1mm]   (4c) circle (0.1);

    \node[color=black] at (0+0.5,-0.5) {$1'$};
    \node[color=black] at (1+0.5,-0.5) {$2'$};
    \node[color=black] at (2+0.5,-0.5) {$3'$};
    \node[color=black] at (3+0.5,-0.5) {$4'$};
    \node[color=black] at (4+0.5,-0.5) {$5'$};
    \node[color=black] at (5+0.5,-0.5) {$6'$};
    \node[color=black] at (6+0.5,-0.5) {$7'$};
    \node[color=black] at (7+0.5,-0.5) {$8'$};
    \node[color=black] at (8+0.5,-0.5) {$9'$};
    \node[color=black] at (9+0.5,-0.5) {$10'$};
    \node[color=black] at (10+0.5,-0.5) {$11'$};
    \node[color=black] at (11+0.5,-0.5) {$12'$};

    \node[color=red] at (0+0.5,1.0) {$K_0$};
    \node[color=red] at (11+0.5,2.0) {$K_1$};

\end{tikzpicture}
}

\end{center}

\caption{An exemplary $H_3(P',\pi')$ from Example~\ref{E8}, with $P'=AABBBAAB$  and $\pi'=\{\{1\},\{2,3\}\}$, isomorphic to $H_3(P,\pi)$ in Figure~\ref{fig:HPpi}.
%, with the isomorphism mapping vertices $(1,2,\dots,12)$ to $(5',3', 1',2',4',6',7',8',9',12',11',10')$.
}
\label{fig:HPpi_prime}
			
\end{figure}

Another important property of the auxiliary hypergraph $H_k(P,\pi)$ is stated in the next observation.

\begin{observation}\label{obs:MatchingInHkPpi} For all $P$ and $\pi$ as above,
every two disjoint edges of $H_k(P,\pi)$ form a pattern belonging to the cube $\cC(P,\pi)$.
%\item $H_k(P,\pi)$  is isomorphic to $H_k((AB)^r,\pi')$, where $\pi'$ is the partition of $[r]$ into consecutive blocks of sizes $r_i$, $i=0,\dots,t$.
\end{observation}

\begin{proof}
Consider two disjoint edges $e_1,e_2$ of $H_k(P,\pi)$ and, for $h=1,2$, write $e_h=e_h^{(0)}\cup e_h^{(1)}\cup\ldots\cup e_h^{(t)}$, where $e_h^{(j)}\in K_j$, $j=0,\dots,t$.
Let us focus on the (linearly ordered) vertex set $e_1\cup e_2$ of size $2r$ and replace every vertex of $e_1$ by $A$ and every vertex of $e_2$ by $B$, obtaining some pattern  $Q$.
 Then, for each $j=0,\dots,t$, the $A$'s and the $B$'s of
$Q$ corresponding to the positions in $e^{(j)}_1 \cup e^{(j)}_2$
 form a pattern $P_j$ or its flip  (depending on the relative positions of $e^{(j)}_1$ and $e^{(j)}_2$). Thus, by the definition of $\cC(P,\pi)$, we have $Q\in \cC(P,\pi)$. \end{proof}

\begin{exmp}\label{E7-cont}
Consider  edges $\{1,6,8,11\},\{2,5,7,10\}$, and $\{2,5,9,12\}$ from the hypergraph $H(P,\pi)$ presented in Example~\ref{E7} (see Figure~\ref{fig:HPpi}). The first two form pattern $ABBABABA$, while the first and the last - pattern $ABBAABAB$, both patterns belonging to $\cC(P,\pi)$.
\end{exmp}

\medskip

In the forthcoming proof we are going to utilize the notion of a blow-up of a hypergraph. For a hypergraph $H$ and an integer $\ell\ge2$,  the \emph{$\ell$-blow-up} of $H$, denoted $H^{(\ell)}$, is the  hypergraph obtained from $H$ by replacing every vertex $u$ by a set $W_u$, with $|W_u|=\ell$, where all sets $W_u$ are pairwise disjoint, and replacing every edge $\{u_{i_1},\dots,u_{i_r}\}$ of $H$ by the edges of the \emph{complete} $r$-partite hypergraph with vertex partition sets $W_{u_{i_1}},\dots, W_{u_{i_r}}$. The set $W_u$ will be referred to as a \emph{$\ell$-blow-up of vertex $u$}.
Moreover, we assume that the {\em ordered} hypergraph $H^{(\ell)}$ inherits the vertex ordering from
$H$. That is, for any vertices $u,w$ of $H$, if $u<w$, then all vertices of the blow-up set $W_u$ precede all vertices in $W_w$ (we will use notation $W_u<W_w$ for that).

Note that if hypergraphs $H$ and $H'$ are isomorphic, then so are their $\ell$-blow-ups, $H^{(\ell)}$ and $(H')^{(\ell)}$. What is more, there is an isomorphism (call it \emph{elegant})  which preserves the blow-up sets. This means that if $\phi$ is an isomorphism between $H$ and $H'$, then there is an (elegant) isomorphism  $\phi^{(\ell)}$    between $H^{(\ell)}$ and $(H')^{(\ell)}$ such that for every $u'\in W_u$, we have $\phi^{(\ell)}(u')\in W_{\phi(u)}$. The existence of elegant isomorphisms will prove to be important soon.

%In particular, as a consequence of Observation~\ref{obs:MatchingInHkPpi}(2), for any positive integer $\ell$,
 %the blow-ups $H_k(P,\pi)^{(\ell)}$ and $H_k((AB)^r,\pi')^{(\ell)}$ are isomorphic.
Before turning to the proof of Theorem~\ref{fullhouse}, we need to define one crucial notion.
A matching  $M$ in an $\ell$-blow-up $H^{(\ell)}$ is called \emph{scattered} if $|V(M)\cap W_u|\le1$ for each $u\in V(H)$.
Note that  if $H$ and $H'$ are isomorphic, then any elegant isomorphism of $H^{(\ell)}$ and $(H')^{(\ell)}$ preserves scattered matchings.

 Note also that, owing to the vertex ordering preservation, any scattered matching in $H^{(\ell)}$ is order isomorphic to a matching in $H$.
By Observation~\ref{obs:MatchingInHkPpi}, we thus immediately obtain the following.
\begin{observation}\label{obs:MatchingInHkPpiXnu}
Every scattered matching in $H_k(P,\pi)^{(\ell)}$ is a $\cC(P,\pi)$-clique. \qed
\end{observation}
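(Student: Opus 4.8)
The plan is to reduce the statement directly to Observation~\ref{obs:MatchingInHkPpi}(1) by exhibiting, for a scattered matching $M\subseteq H_k(P,\pi)^{(\ell)}$, an order isomorphism between $M$ and an honest sub-matching of $H_k(P,\pi)$. First I would introduce the projection $p\colon V\bigl(H_k(P,\pi)^{(\ell)}\bigr)\to V\bigl(H_k(P,\pi)\bigr)$ that collapses each blow-up set $W_u$ to the original vertex $u$. By the definition of the $\ell$-blow-up, every edge of $H_k(P,\pi)^{(\ell)}$ is a transversal of the sets $W_{u_{i_1}},\dots,W_{u_{i_r}}$ associated with an edge $\{u_{i_1},\dots,u_{i_r}\}$ of $H_k(P,\pi)$, so $p$ maps each such edge onto an edge of $H_k(P,\pi)$. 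Moreover, since the ordering of $H_k(P,\pi)^{(\ell)}$ refines that of $H_k(P,\pi)$ (all of $W_u$ precedes all of $W_w$ whenever $u<w$), the map $p$ is order preserving.

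Next I would invoke the scattered hypothesis, namely $|V(M)\cap W_u|\le1$ for every $u$, which makes the restriction $p|_{V(M)}$ injective; consequently it is strictly order preserving on $V(M)$, hence an order isomorphism onto its image. It therefore carries $M$ to a matching $M'=\{p(e):e\in M\}$ in $H_k(P,\pi)$ of the same size, and $p|_{V(M)}$ is an order isomorphism from the ordered hypergraph $M$ onto the ordered hypergraph $M'$. In particular, for any two edges $e,f\in M$, the $2r$-vertex ordered set $e\cup f$ is order isomorphic to $p(e)\cup p(f)$, so $e$ and $f$ form exactly the same $r$-pattern as $p(e)$ and $p(f)$ do.

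Finally I would note that, $M$ being a matching, its edges are pairwise disjoint, hence so are $p(e)$ and $p(f)$; Observation~\ref{obs:MatchingInHkPpi}(1) then gives that $p(e),p(f)$ form a pattern belonging to $\cC(P,\pi)$, and therefore so do $e,f$. As this holds for every pair of edges of $M$, the matching $M$ is a $\cC(P,\pi)$-clique. The only point that needs any care — and it is genuinely minor, which is why this is labelled an Observation — is the bookkeeping that $p|_{V(M)}$ really is an order isomorphism onto $M'$: injectivity comes from ``scattered'', the edge-to-edge property from the blow-up definition, and order preservation from the stipulated inheritance of the vertex ordering by $H_k(P,\pi)^{(\ell)}$. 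No new combinatorial or probabilistic input is required.
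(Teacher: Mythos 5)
Your proposal is correct and follows exactly the route the paper intends: the paper disposes of this observation in one sentence (``owing to vertex ordering preservation, such a matching is order isomorphic to a matching in $H$'', then cite Observation~\ref{obs:MatchingInHkPpi}(1)), and your projection map $p$ merely makes that order isomorphism explicit. The only cosmetic remark is that the disjointness of $p(e)$ and $p(f)$ is really a consequence of the injectivity of $p|_{V(M)}$ (i.e.\ of scatteredness) rather than of the disjointness of $e$ and $f$ alone, but since you establish that injectivity first, the argument is complete.
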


For an example of a scattered matching see Figure~\ref{fig:HQ4} below.

\subsection{Proof of Theorem~\ref{fullhouse}, lower bound}

We first outline the idea of proof. Let $P$ and $\pi$ be as in the assumptions of Theorem~\ref{fullhouse}.
For $n$ sufficiently large,  set $k:=\lfloor n^{\frac{1}{r-t}}\rfloor$ and  $\ell:=\lfloor n^{1-\frac{1}{r-t}}\rfloor$, and let $H$ be an ordered copy of the  $\ell$-blow-up $H_k(P,\pi)^{(\ell)}$
 planted on the vertex set $[rk\ell]$ (with the natural linear ordering). Let a random variable $X$ be the largest size of a scattered matching in $\rm_n^{(r)}\cap H$. By Observation~\ref{obs:MatchingInHkPpiXnu}, $z_{\cC(P,\pi)}(\rm^{(r)}_{n})\ge X$.
 We will prove that $\E X\ge c_r k$, and then apply  Lemma~\ref{tala} to show sharp concentration of $X$ around its median $m$, and, ultimately, around $\E X$, which will complete the proof.

 Although, we could proceed directly with that plan, the estimates and, most of all, notation, would be quite cumbersome. Therefore, we resort  to a little trick allowing for ``switching horses'' and achieving the same goal in a more economic way. Namely, by Observation~\ref{universal}, the hypergraph $H_k(P,\pi)^{(\ell)}$ is elegantly isomorphic to  (simpler to analyze) $H_k((AB)^r,\pi')$  with the seed consisting of the $r$-partite pattern $(AB)^r$, called \emph{$r$-wave}, and the partition $\pi'$  of $[r]$ into \emph{consecutive} blocks of sizes $r_j$, $j=0,\dots,t$. Note that each mega-block $P_j$ is an $r_j$-wave itself.

 Let $H'$ be an ordered copy of the  $\ell$-blow-up $H_k((AB)^r,\pi')^{(\ell)}$
 planted on the vertex set $[rk\ell]$ (with the natural linear ordering) and let $X'$ be defined analogously to $X$, but with respect to $H'$. Owing to the  isomorphism, the probability spaces of random sub-matchings $\rm_n^{(r)}\cap H$ and $\rm_n^{(r)}\cap H'$ are stochastically equivalent, and, in turn, since there is an elegant isomorphism, the distributions of $X$ and $X'$ are the same. Consequently, it suffices to show  instead that $\E X'\ge c_r k$.
Before submerging into the details,  let us see an example alluring to Example~\ref{E7}.

\begin{exmp}\label{AB4} Let $k=3$, $P$ and $\pi$ be as in Example~\ref{E7}, $P'=(AB)^4$,
and $\pi'=\{1,2\}\cup\{3,4\}$. Hypergraph $H_3(P',\pi')$, presented in Figure~\ref{fig:HQ4}, is isomorphic to $H_3(P,\pi)$ in Figure~\ref{fig:HPpi}.
As this figure is supposed to illustrate  some steps in the forthcoming proof, we denote the vertices by $w_h^i$, $i=1,2,3,4$, $h=1,2,3$, and also indicate how a 3-blow-up $H_3^{(3)}(P',\pi')$ would look like.
\end{exmp}

\begin{figure}[ht]
\captionsetup[subfigure]{labelformat=empty}

\scalebox{1}
{
\centering
\begin{tikzpicture}
[line width = .5pt,
vtx/.style={circle,draw,black,very thick,fill=black, line width = 0pt, inner sep=0pt},
]

	\foreach \i in {1,...,12}
	{	
		\node[vtx] (\i) at (\i*1,0) {};
	}
    	\node[vtx] (0) at (0.5,0) {};
    	\node[vtx] (13) at (12.5,0) {};
	
	\foreach \i in {4,5,6,10,11,12}
	{
		\draw[line width=0.5mm, color=red, outer sep=2mm] (\i) arc (0:180:1.5);
	}
	
	\draw[line width=0.5mm, color=blue, outer sep=2mm] (7) arc (0:180:0.5);
	\draw[line width=0.5mm, color=blue, outer sep=2mm] (7) arc (0:180:1);
	\draw[line width=0.5mm, color=blue, outer sep=2mm] (7) arc (0:180:1.5);
	
	\draw[line width=0.5mm, color=blue, outer sep=2mm] (8) arc (0:180:1);
	\draw[line width=0.5mm, color=blue, outer sep=2mm] (8) arc (0:180:1.5);
	\draw[line width=0.5mm, color=blue, outer sep=2mm] (8) arc (0:180:2);
	
	\draw[line width=0.5mm, color=blue, outer sep=2mm] (9) arc (0:180:1.5);
	\draw[line width=0.5mm, color=blue, outer sep=2mm] (9) arc (0:180:2);
	\draw[line width=0.5mm, color=blue, outer sep=2mm] (9) arc (0:180:2.5);

%	\foreach \i in {1,...,12}
%	{	
%		\draw[line width=0.3mm] (\i) ++(0,-2) ellipse (0.3cm and 0.7cm);
%
%		\fill[fill=black, outer sep=0.75mm]  (\i) ++(0,-1.5) circle (0.06);
%		\fill[fill=black, outer sep=0.75mm]  (\i) ++(0,-2) circle (0.06);
%		\fill[fill=black, outer sep=0.75mm]  (\i) ++(0,-2.5) circle (0.06);
%
%                \draw[line width=0.3mm,<-] (\i)++(0,-0.25) -- ++(0,-0.85);
%
%	}

	\foreach \i in {1,...,4}
	{	
		\foreach \j in {1,...,3}
		{
		        \pgfmathsetmacro{\result}{3*(\i-1)+\j}
		
			\fill[fill=black, outer sep=0.75mm]  (\result) circle (0.075) node [above right=-0.17cm] {{\footnotesize{$\,w^{\i}_{\j}$}}};

			\fill[fill=black, outer sep=0.75mm]  (\result) ++(0,-1.5) circle (0.06);
			\fill[fill=black, outer sep=0.75mm]  (\result) ++(0,-2) circle (0.06);
			\fill[fill=black, outer sep=0.75mm]  (\result) ++(0,-2.5) circle (0.06);

			%\draw[line width=0.3mm] (\result) ++(0,-2) ellipse (0.3cm and 0.7cm) node [below=0.65cm] {{\footnotesize{$W^{\i}_{\j}$}}};
			\draw[line width=0.3mm] (\result) ++(0,-2) ellipse (0.25cm and 0.7cm) node [above=0.75cm, right=0cm] {{\footnotesize{$W^{\i}_{\j}$}}};
                		\draw[line width=0.3mm,<-] (\result)++(0,-0.25) -- ++(0,-0.85);
			
		}
	}
	
	\draw[line width=0.5mm, dotted] (1) ++(0,-1.5)  .. controls (2.5,-4) ..  (4,-2.5);
	\draw[line width=0.5mm, dotted] (4) ++(0,-2.5)  .. controls (6.75,-3.9) ..  (9,-2);
	\draw[line width=0.5mm, dotted] (9) ++(0,-2)  .. controls (10.5,-4) ..  (12,-2);
	
	\fill[fill=black, outer sep=0.75mm]  (1) ++(0,-1.5) circle (0.1);
	\fill[fill=black, outer sep=0.75mm]  (4) ++(0,-2.5) circle (0.1);
	\fill[fill=black, outer sep=0.75mm]  (9) ++(0,-2) circle (0.1);
	\fill[fill=black, outer sep=0.75mm]  (12) ++(0,-2) circle (0.1);
	
	\draw[line width=0.5mm, dotted] (2) ++(0,-2.5)  .. controls (3.5,-4) ..  (5,-2);
	\draw[line width=0.5mm, dotted] (5) ++(0,-2)  .. controls (6.75,-3.75) ..  (8,-1.5);
	\draw[line width=0.5mm, dotted] (8) ++(0,-1.5)  .. controls (9.5,-4) ..  (11,-2.5);
	
	\fill[fill=black, outer sep=0.75mm]  (2) ++(0,-2.5) circle (0.1);
	\fill[fill=black, outer sep=0.75mm]  (5) ++(0,-2) circle (0.1);
	\fill[fill=black, outer sep=0.75mm]  (8) ++(0,-1.5) circle (0.1);
	\fill[fill=black, outer sep=0.75mm]  (11) ++(0,-2.5) circle (0.1);
	
	\draw[line width=0.3mm, color=black]  (0) -- (13);

%	\node[color=black] at (7,-4) {\emph{Good edge}};

\end{tikzpicture}
}

\caption{The 4-uniform hypergraph $H_3((AB)^{4},\pi')$ described in Example~\ref{AB4} and its 3-blow-up. Two  good edges are drawn in dotted lines; they are of types $(1,3)$ and $(2,2)$, so they are separated and form a scattered matching of size two.
}
\label{fig:HQ4}
			
\end{figure}	

%Now, we are ready to proceed with the details of the proof.
%In that proof we will blow-up every vertex $w^i_h$ to an $\ell$-set $W^i_h$ and implant the obtained $r$-graph $H$ within the vertex set $[rn]$. Then, by a probabilistic analysis we argue that the random $r$-matching $\rm_n^{(r)}$ ``catches'' a large scattered matching of $H$.

\begin{proof} [Proof of Theorem~\ref{fullhouse}]
%Let $P$ be a collectable $r$-pattern with an ordered partition  having $s\leqslant r$ summands and let $\pi$ be any partition of $[s]$ into $t+1$ nonempty parts.

Since we have switched from $H_k(P,\pi)^{(\ell)}$ to $H_k((AB)^r,\pi')^{(\ell)}$ and will work exclusively with the latter, we suppress the prime signs in $\pi'$, $H'$, and $X'$.
Let $r\ge 2$, $1\le t\le r-1$, and $n$ sufficiently large. For clarity of presentation,  assume that $k=n^{\frac{1}{r-t}}$ and $\ell = n^{1-\frac{1}{r-t}}$ are integers (the general case brings only additional multiplicative factors  $(1-o(1))$ in the estimates, which have a negligible effect.)

Set
$Q=(AB)^r$ for convenience and let $\pi$ be a partition of the form $[r]=T_0\cup T_1\cup\cdots\cup T_t$ where the $T_i$'s are blocks of consecutive integers of length $r_i$, that is, $T_0=[r_0]$, $T_1=\{r_0+1,\dots,r_0+r_1\}$, and so on. %Hence, $\sum_{i=0}^t r_i=r$.
 Further, let $V$ be a linearly ordered set of vertices of $H_k(Q,\pi)$, $V=V_1\cup\cdots \cup V_r$, where $V_i=w_1^i,w_2^i,\dots, w_k^i$, $i=1,\dots,r$, (cf.\ Figure~\ref{fig:HQ4}).

%$w_1^1,w_2^1,\dots, w_k^1,w_1^2,w_2^2,\dots, w_k^2,\ldots,w_1^r,w_2^r,\dots, w_k^r$
Also, for $j=0,\dots,t$,
let $Q_j=(AB)^{r_j}$ be the $j$-th mega-block of $Q$
defined by partition $\pi$, and  set $K_j$ for the unique $Q_j$-clique of size $k$ on the vertex set
$$U_j=\bigcup_{i\in T_i}V_i=\{w_1^{r_0+\cdots +r_{j-1}+1},w_2^{r_0+\cdots +r_{j-1}+1},\ldots,w_k^{r_0+\cdots +r_j}\}.$$
(In Figure~\ref{fig:HQ4}, $U_0=\{w_1^1,w_2^1,w_3^1, w_1^2,w_2^2,w_3^2\}$ and $U_1=\{w_1^3,w_2^3,w_3^3, w_1^4,w_2^4,w_3^4\}$.)

Suppose a copy $H$ of the ordered blow-up $H_k(Q,\pi)^{(\ell)}$ of $H_k(Q,\pi)$
is fixed on the vertex set $[rk\ell]$
 and let
\[
W_1^1<W_2^1<\cdots< W_k^1<W_1^2<W_2^2<\cdots< W_k^2<\cdots<W_1^r<W_2^r<\cdots< W_k^r
\] be the corresponding blow-up sets (each of size~$\ell$), where $U<W$ means that every vertex of $U$ is to the left of every vertex of $W$.

For an edge $e$ of $\rm_n^{(r)}$ to belong to a scattered matching of $H$, it is necessary that

\begin{itemize}
\item $e$ contains at most one vertex from each set $W^i_h$ and, most importantly,
\item within the blow-up of each $Q_j$-clique $K_j$, the $r_j$ sets $W^i_h$ which contribute vertices to $e$ have the same lower index (because $Q_j$ is an $r_j$-wave).
    \end{itemize}
Consider, for example, the blow-up  in  Figure~\ref{fig:HQ4}. Here a good edge should take, for some $h_0,h_1\in[3]$, one vertex from both sets $W^1_{h_0},W^2_{h_0}$
 and one vertex from both sets  $W^3_{h_1},W^4_{h_1}$.

Generally, an edge $e\in\rm_n^{(r)}$ is  deemed \emph{good} if, for some vector $\omega=(h_0,h_1,\ldots,h_t)\in[k]^{t+1}$,  $e$ contains one vertex from each of the sets
\[
W_{h_0}^1,W_{h_0}^2,\dots,W_{h_0}^{r_0}, W_{h_1}^{r_0+1},W_{h_1}^{r_0+2},\dots,W_{h_1}^{r_0+r_1},\dots, W_{h_t}^{r_0+\dots +r_{t-1}+1},W_{h_t}^{r_0+\dots+ r_{t-1}+2},\dots,W_{h_t}^{r}.
\]
We then say  that such an edge is of \emph{type} $\omega$. The good edges in Figure~\ref{fig:HQ4} are of types $(1,3)$ and $(2,2)$.

A pair of vectors $\omega=(h_0,h_1,\ldots,h_t)$ and $\omega'=(h'_0,h'_1,\ldots,h'_t)$ is called \emph{totally distinct} if $h_j\neq h'_j$ for all $j=0,\dots,t$.
A pair of good edges $e,e'$ of types $\omega$ and $\omega'$, respectively, is called \emph{separated}  if the pair $(\omega,\omega')$ is totally distinct.
Note that any set of pairwise separated good edges forms a scattered $H_k(Q,\pi)^{(\ell)}$-matching.

%Let $X$ be the largest size of a scattered $H_k(Q,\pi)^{(\ell)}$-matching in $\rm_n^{(r)}$.
%We will show that $\E X=\Omega(k)$  and afterwards apply Lemma~\ref{tala} to get $X=\Omega(k)$ a.a.s.
In order to bound the expectation of $X$ from below we use the deletion method. To this end, let $Y$ be the number of good edges in~$\rm_n^{(r)}$. Since there are $k^{t+1}$ choices of a vector $\omega=(h_0,h_1,\ldots,h_t)$ and $|W_h^j|=\ell$ for all $h$ and $j$, we have, by~\eqref{1edge},
\[
\E Y = k^{t+1} \ell^r \cdot \frac{1}{\binom{rn-1}{r-1}}
\ge k^{t+1} \ell^r \frac{(r-1)!}{(rn)^{r-1}}
= kn^{\frac{t}{r-t}} n^{r-\frac{r}{r-t}} \frac{(r-1)!}{(rn)^{r-1}}
= \frac{(r-1)!}{r^{r-1}} k.
\]

Now, let $Z$  count  non-separated pairs $\{e,e'\}$ of good edges. Let $\omega=(h_0,h_1,\ldots,h_t)$ and $\omega'=(h_0',h_1',\ldots,h_t')$ be the types of $e$ and $e'$, respectively. Since $e$ and $e'$ are not separated, $\omega$ and $\omega'$ are not totally distinct and, consequently, there must be at least one index $j\in\{0,\dots,t\}$ such that $h_j=h_j'$. Hence, for a fixed~$e$ the number of choices of $e'$ is at most $(t+1)k^t \ell^r\le rk^t \ell^r$ and thus
\[
\E Z \le \frac{1}{2} k^{t+1} \ell^r r k^{t} \ell^r \cdot \frac{1}{\binom{rn-1}{r-1}\binom{rn-r-1}{r-1}}
= \E Y \cdot rn^{r-1} \cdot \frac{1}{2\binom{rn-r-1}{r-1}},
\]
where the constant $\frac12$ takes into account that the pairs $\{e,e'\}$ are unordered. We further bound the denominator as follows:

\begin{align*}
2\binom{rn-r-1}{r-1}&=\frac{2(rn)^{r-1}}{(r-1)!}\left(1-\frac{r+1}{rn}\right)\cdots\left(1-\frac{2r-1}{rn}\right)
\ge\frac{2(rn)^{r-1}}{(r-1)!}\left(1-\frac2n\right)^{r-1}\\&\ge\frac{2(rn)^{r-1}}{(r-1)!}\left(1-\frac{2r}n\right)\ge \frac{3(rn)^{r-1}}{2(r-1)!},
\end{align*}
where the penultimate inequality comes from Bernoulli's inequality and the last one holds for $n\ge8r$. Hence,
$\E Z\le \frac{2r!}{3r^{r-1}}\cdot\E Y  $. (Note that $\frac{2r!}{3r^{r-1}}<1$ for all $r\ge2$.)

%For $r=2$ one may apply a slightly more subtle estimates to get, say, $\E Z \le \frac{2}{3}\E Y$ (for $n$ large enough).

%Consider all $Y$  good edges in $\rm^{(r)}_{n}$. Among them there are $Z$ non-separated pairs of good edges.
After removing form the set of all $Y$  good edges one edge from each non-separated pair, we get a set of good, pairwise separated edges, and thus, a scattered matching of size at least $Y-Z$. Therefore, $X\ge Y-Z$ and

\begin{align}\label{EXbound}
\E X \ge \E(Y-Z) = \E Y - \E Z &\ge \E Y \left(1 - \frac{2r!}{3r^{r-1}}\right) \notag \\
&\ge \frac{(r-1)!}{r^{r-1}} \left(1 - \frac{2r!}{3r^{r-1}}\right) k =
\Omega_r(k).
\end{align}

%for $r\geq 3$, and $\E X\geq \frac{1}{6}k=\Omega(k)$ for $r=2$.

It remains to apply Lemma~\ref{tala} to show sharp concentration of $X$ around its median $m$, and, ultimately, around $\E X$. Recall (cf.\ Section~\ref{section:random}) that $\rm^{(r)}_{n}$ can be generated by using the permutational scheme. Thus, one can view $X$ as a function $h$ of an $N$-permutation $\Pi_{N}$ with $N=rn$, that is, $X=h({\Pi}_{N})$. Notice that swapping two elements of $\Pi_{N}$ affects at most two edges of $\rm^{(r)}_{n}$, so it changes the value of $h$ by at most~2. Moreover, to exhibit the event $X\ge s$, it is sufficient to reveal a scattered $H_k(Q,\pi)^{(\ell)}$-matching
of size~$s$. Since every edge in such a matching can be encoded by specifying $r$ values of $\Pi_{N}$, it suffices to reveal only $rs$ values of $\Pi_N$.
Hence,  Theorem \ref{tala}, applied with $N=rn$, $c=2$, $d=r$ and $\eps=1/2$, implies that
\begin{equation}\label{TalaX}
\PP(|X-m|\ge m/2)\le4\exp(-m/(512r)).
\end{equation}

To finish the proof and show sharp concentration of $X$ around $\E X$ we need to switch from  median $m$ to  expectation $\mu:=\E X$. Since, under concentration~\eqref{TalaX}, $|m -\mu|=O(\sqrt m)$ (see for example \cite{Talagrand} or Lemma 4.6 in~\cite{McDiarmid1998}) and, by~\eqref{EXbound}, $\mu\to\infty$, it follows that $m\to\infty$ too and, in particular,  $|m-\mu|\le (1/12)\mu$. This and ~\eqref{TalaX}  imply together that $\PP(|X-m|\ge m/2) = o(1)$ and, consequently,
	\begin{align*}
		&\PP(|X-\mu|\ge(3/4)\mu)
		= \PP(|X-m + m-\mu|\ge(3/4)\mu)\\
		&\le \PP(|X-m| + |m-\mu|\ge(3/4)\mu)\le \PP(|X-m| + (1/12)\mu \ge(3/4)\mu)\\
		&= \PP(|X-m| \ge(2/3)\mu)\le \PP(|X-m|\ge m/2) = o(1).
	\end{align*}
	Hence, a.a.s.,~$X \ge (1/4)\mu=(1/4)\E X$, which combined with~\eqref{EXbound} finishes the proof of Theorem~\ref{fullhouse}.
\end{proof}

\section{Open problems}

\subsection{Orders of magnitude} There are several sets of $r$-patterns $\cP$ for which the order of magnitude of the size of the largest $\cP$-clique remains unknown. For instance, we wonder if
one can match the upper bound in Proposition~\ref{fullhouse-1} with a lower bound of the same order of magnitude. We were only able to do it in the special case of $r$-partite patterns (cf. Proposition~\ref{thm:random_three_patterns_stronger}).

An intriguing question pops up in the context of Theorem~\ref{fullhouse}. Recall that a $t$-cube $\cC(P,\pi)$ has size $2^t$ for some $t\ge1$ and, a.a.s., $z_{\cC(P,\pi)}(\rm^{(r)}_{n})=\Theta(n^{1/(r-t)})$. We believe that, for every $r$ and $t\leq r-1$, if $\cP$ is a set of $r$-patterns of size $|\cP|=2^t$, then, a.a.s.,
\[
z_{\cP}(\rm^{(r)}_{n})=O(n^{1/(r-t)})
\]
and, moreover, $z_{\cP}(\rm^{(r)}_{n})=\Theta(n^{1/(r-t)})$ only if $\cP$ is a $t$-cube.

 Ultimately, one would like to learn the spectrum of all functions $f(n)$ for which there exist sets of $r$-patterns $\cP$ such that, a.a.s.,
$z_{\cP}(\rm^{(r)}_{n})=\Theta(f(n))$. So far, all such functions have been constrained to the set
$\{n^{1/m},n^{1-1/m}: m=1,\dots,r\}.$ For instance, it is $\{1,n^{1/3}, n^{1/2}, n^{2/3}, n\}$ for $r=3$ and $\{1,n^{1/4}, n^{1/3}, n^{1/2}, n^{2/3}, n^{3/4}, n\}$ for $r=4$.
Does the spectrum extend beyond that?

\subsection{Triplets of patterns} In view of Theorem~\ref{thm:random_two_patterns_gen}, a next realistic goal might be to find  orders of magnitude of $z_{\cP}(\rm^{(r)}_{n})$ for all sets $\cP$ of triplets of $r$-patterns. Again, the most challenging case seems to be when at least two of these patterns are harmonious. So far, we only know that, a.a.s.,  $z_{\cP}(\rm^{(3)}_{n})=\Theta(n^{2/3})$ for any triplet of 3-partite 3-patterns (see  Corollary~\ref{friends}), and $z_{\cP}(\rm^{(3)}_{n})=\Theta(n^{1/2})$ for $\cP$ consisting of two 3-partite 3-patterns and the line $P_1$, as well as for $\cP=\{P_i,P_3,P_5\}$, $i=2,4$ (see \cite[Corollary 1.4]{Mgr}).

In contrast, the case when all three patterns in $\cP$ are pairwise non-harmonious  and at least one is collectable, should be more tractable. (If all three are non-collectable, we know, by the argument opening Section~\ref{non-collect}, that  $z_{\cP}(\rm^{(3)}_{n})=\Theta(1)$.)
We conjecture that then, a.a.s., $z_{\mathcal P}(\rm^{(r)}_{n})=\Theta(n^{1/r})$, the same as  for collectable singletons and mismatch pairs. We have confirmed  this only for  triplets of Dyck $r$-patterns (see Proposition~\ref{1-over-r}) and for $\cP=\{P_2,P_4,P_9\}$ (see \cite[Corollary 1.4]{Mgr}).

One obstacle in proving the conjecture is that an analog of Observation~\ref{obs:1} holds only for $r\ge5$. Indeed, it fails for a number of triplets of 4-patterns, e.g., for $$\{ABABABAB,ABBAAABB,AABBBBAA\},$$ and is doomed to fail for \emph{all} triplets of 3-patterns, since there are only two different compositions for 2-patterns (2 and $1+1$), so after removing a letter, there must be a harmonious pair.

Thus, already for $r=3$ we need either a new idea or try some ad hoc techniques for particular triplets. For instance, for $\cP=\{P_2,P_4,P_9\}$  we were able to show reconstructability of $\cP$-cliques by directly analyzing the structure of such cliques; this, further led to compute $a_{\cP}(k)$ for this $\cP$ precisely (see~\cite[Proposition 1.6]{Mgr}). On the other hand, for $\{P_3,P_5,P_7\}$
 even reconstructability fails, as the following counterexample shows: $M=ABBCAACBC$ and $N=ABACCABBC$ are both $\{P_3,P_5,P_7\}$-cliques with the same trace $\tr(M)=\tr(N)=112123233$. Recall, however, that as remarked at the end of Section~\ref{comtool}, reconstructability was not vital for the proof of Theorem~\ref{thm:random_two_patterns_gen} for mismatched pairs.

 \subsection{The role of non-collectable patterns}\label{7.3}
 We already know that, even deterministically, the largest $\cP$-cliques for sets $\cP$ consisting entirely of non-collectable patterns have size $\Theta(1)$. It is plausible that they are likewise meaningless in combination with collectable patterns. In particular, we speculate that  for every set $\mathcal P$ of collectable $r$-patterns and every set $\mathcal Q$ of non-collectable $r$-patterns, a.a.s.,
\begin{equation}\label{PQ}
z_{\mathcal P}(\rm^{(r)}_{n})=\Theta(z_{\mathcal P\cup\mathcal Q}(\rm^{(r)}_{n})).
\end{equation}

 We know it is true when $|\cP|=|\cQ|=1$ (cf. Theorem~\ref{thm:random_two_patterns_gen}, the mismatch case) as well as when $\cP=\cR^{(r)}$, the set of all $r$-partite $r$-patterns, since then, a.a.s., $z_{\mathcal P}(\rm^{(r)}_{n})=\Theta(n)$ (cf. Theorem~\ref{thm:random_r-partite}). Two other instances confirming this speculation are provided by Corollary~\ref{friends} and Proposition~\ref{1-over-r}. Indeed, in Corollary~\ref{friends} the order of magnitude of $z_{\mathcal P}(\rm^{(r)}_{n})$ remains the same for any set $\cP$ satisfying the inclusions. In particular, one can take $\cP=\cR^{(r)}\setminus\{P\}$, $P\in\cR^{(r)}$, and for $\cQ$ - all non-collectable $r$-patterns. Then,~\eqref{PQ} holds.

 As for Proposition~\ref{1-over-r}, note that the set $\cP^{(r)}_{Dyck}$ contains  exactly $2^{r-1}$  collectable patterns, one for each composition of $r$, and thus $C_r-2^{r-1}$ non-collectable ones. So, denoting these two sets by $\cP$ and $\cQ$,  ~\eqref{PQ} holds again.
In line with the above speculation, we believe that adding even all remaining $\left(\tfrac12-\tfrac1{r+1}\right)\binom{2r}r-3^{r-1}+2^{r-1}$ non-collectable patterns to $\cP^{(r)}_{Dyck}$ would not make a change. (While, in contrast, adding even a single collectable pattern $P$ to $\cP^{(r)}_{Dyck}$, in view of Theorem~\ref{thm:random_two_patterns_gen} for harmonious pairs, would increase the size of the largest clique to $\Omega(n^{1/(r-1)})$.)

For instance, for $r=4$, out of all eight non-collectable patterns only $P'=ABBBABAA$ and $P''=AABABBBA$ are not Dyck words, and we conjecture that the extended set $\cP^{(4)}_{Dyck}\cup\{P',P''\}$ should also satisfy the conclusion of Proposition~\ref{1-over-r}.

 In the context of Theorem~\ref{thm:random_r-partite},  a related question emerges:  does $z_{\cR^{(r)}}(\rm^{(r)}_{n})$ increase if one replaces $\cR^{(r)}$ with the set of all collectable $r$-patterns.
 More specifically, a matching is deemed \emph{clean} if every pair of edges forms a collectable pattern. Estimate the largest clean sub-matching of   $\rm^{(r)}_n$, that is, find a constant $c_r$ such that, a.a.s., the size of a largest clean sub-matching in $\rm^{(r)}_n$ is $\sim c_rn$. By Theorem~\ref{thm:random_r-partite} we know that $c\ge(r-1)!/r^{r-1}$.
 (The deterministic version of this question seems also interesting and is probably harder.)

\subsection{Matching processes}

There is yet another way of generating $\rm^{(r)}_n$, besides the uniform sampling and the permutational scheme described in Section~\ref{section:random}.
  In the \emph{online scheme}, for an ordered set of $rn$ vertices,  sequentially, for each  next available vertex $v$  a set of $r-1$ vertices is selected uniformly at random (from all still  available vertices) to form with $v$ an $r$-element edge.

  In this paper we have been interested in the final stage of the process, that is, in $\rm^{(r)}_n$.
 But ultimately, one could try to get  asymptotic properties of the sub-matching formed by the first $t$ edges of the process, $t=1,\dots,n$. Note that it is not the same as $\rm^{(r)}_t$, except for $t=n$. Indeed, even for $r=t=2$ the probability of getting $AABB$ is exactly $1/(2n-1)$ but each of $ABAB$ and $ABBA$ appears with probability $(n-1)/(2n-1)$. Hence, in contrary to $\rm^{(2)}_n$, for $n\ge 3$ not all matchings occur with the same probability.

 In relation to this paper, the goal might be, for a given set $\cP$ of $r$-patterns and  any sequence $1\le t=t(n)\le n$, to determine the order of magnitude of the size of a largest $\mathcal P$-clique in the ordered matching $t$-process, and reversely, given a function $k=k(n)$, to determine the threshold $t=t(n)$ (or even the hitting time) for the $t$-process to grow a $\mathcal P$-clique of size $k(n)$.

% \bib, bibdiv, biblist are defined by the amsrefs package.
\begin{bibdiv}
\begin{biblist}

\bib{AJKS}{article}{
      author={Anastos, Michael},
      author={Jin, Zhihan},
      author={Kwan, Matthew},
      author={Sudakov, Benny},
       title={Extremal, enumerative and probabilistic results on ordered
  hypergraph matchings},
        date={2025},
        ISSN={2050-5094},
     journal={Forum Math. Sigma},
      volume={13},
       pages={Paper No. e55},
         url={https://doi.org/10.1017/fms.2024.144},
      review={\MR{4878631}},
}

\bib{BaikRains}{article}{
      author={Baik, Jinho},
      author={Rains, Eric~M.},
       title={The asymptotics of monotone subsequences of involutions},
        date={2001},
        ISSN={0012-7094},
     journal={Duke Math. J.},
      volume={109},
      number={2},
       pages={205\ndash 281},
         url={https://doi.org/10.1215/S0012-7094-01-10921-6},
      review={\MR{1845180}},
}

\bib{DGPR-LATIN}{incollection}{
      author={Dudek, Andrzej},
      author={Grytczuk, Jaros{\l a}w},
      author={Przyby{\l o}, Jakub},
      author={Ruci\'{n}ski, Andrzej},
       title={Homogeneous substructures in random ordered uniform matchings},
        date={2026},
   booktitle={L{ATIN} 2026: {T}he 17th {L}atin {A}merican {T}heoretical
  {I}nformatics {S}ymposium, {L}ecture {N}otes in {C}omput. {S}ci.},
   publisher={Springer},
        note={To appear},
}

\bib{DGR-socks}{article}{
      author={Dudek, Andrzej},
      author={Grytczuk, Jaros{\l a}w},
      author={Ruci\'{n}ski, Andrzej},
       title={Largest bipartite sub-matchings of a random ordered matching or a
  problem with socks},
        date={2024},
     journal={Enumer. Comb. Appl.},
      volume={4},
      number={4},
       pages={Paper No. S2R28, 8},
         url={https://doi.org/10.54550/eca2024v4s4r28},
      review={\MR{4753487}},
}

\bib{DGR-match}{article}{
      author={Dudek, Andrzej},
      author={Grytczuk, Jaros{\l a}w},
      author={Ruci\'{n}ski, Andrzej},
       title={Ordered unavoidable sub-structures in matchings and random
  matchings},
        date={2024},
     journal={Electron. J. Combin.},
      volume={31},
      number={2},
       pages={Paper No. 2.15, 27},
         url={https://doi.org/10.37236/11932},
      review={\MR{4734453}},
}

\bib{JCTB_paper}{article}{
      author={Dudek, Andrzej},
      author={Grytczuk, Jaros{\l a}w},
      author={Ruci\'{n}ski, Andrzej},
       title={Erd{\H o}s-{S}zekeres type theorems for ordered uniform
  matchings},
        date={2025},
        ISSN={0095-8956,1096-0902},
     journal={J. Combin. Theory Ser. B},
      volume={170},
       pages={225\ndash 259},
         url={https://doi.org/10.1016/j.jctb.2024.09.004},
      review={\MR{4808802}},
}

\bib{FP}{incollection}{
      author={Frieze, Alan},
      author={Pittel, Boris},
       title={Perfect matchings in random graphs with prescribed minimal
  degree},
        date={2004},
   booktitle={Mathematics and computer science. {III}},
      series={Trends Math.},
   publisher={Birkh\"{a}user, Basel},
       pages={95\ndash 132},
      review={\MR{2090500}},
}

\bib{GP2019}{article}{
      author={Gunby, Benjamin},
      author={P\'{a}lv\"{o}lgyi, D\"{o}m\"{o}t\"{o}r},
       title={Asymptotics of pattern avoidance in the {K}lazar set partition
  and permutation-tuple settings},
        date={2019},
        ISSN={0195-6698,1095-9971},
     journal={European J. Combin.},
      volume={82},
       pages={102992, 21},
         url={https://doi.org/10.1016/j.ejc.2019.07.003},
      review={\MR{3983121}},
}

\bib{JSW}{article}{
      author={Justicz, Joyce},
      author={Scheinerman, Edward~R.},
      author={Winkler, Peter~M.},
       title={Random intervals},
        date={1990},
        ISSN={0002-9890,1930-0972},
     journal={Amer. Math. Monthly},
      volume={97},
      number={10},
       pages={881\ndash 889},
         url={https://doi.org/10.2307/2324324},
      review={\MR{1079974}},
}

\bib{LuczakMcDiarmid}{article}{
      author={Luczak, Malwina~J.},
      author={McDiarmid, Colin},
       title={Concentration for locally acting permutations},
        date={2003},
        ISSN={0012-365X},
     journal={Discrete Math.},
      volume={265},
      number={1-3},
       pages={159\ndash 171},
      review={\MR{1969372}},
}

\bib{MT}{article}{
      author={Marcus, Adam},
      author={Tardos, G\'abor},
       title={Excluded permutation matrices and the {S}tanley-{W}ilf
  conjecture},
        date={2004},
        ISSN={0097-3165,1096-0899},
     journal={J. Combin. Theory Ser. A},
      volume={107},
      number={1},
       pages={153\ndash 160},
         url={https://doi.org/10.1016/j.jcta.2004.04.002},
      review={\MR{2063960}},
}

\bib{McDiarmid1998}{incollection}{
      author={McDiarmid, Colin},
       title={Concentration},
        date={1998},
   booktitle={Probabilistic methods for algorithmic discrete mathematics},
      series={Algorithms Combin.},
      volume={16},
   publisher={Springer, Berlin},
       pages={195\ndash 248},
         url={https://doi.org/10.1007/978-3-662-12788-9_6},
      review={\MR{1678578}},
}

\bib{McDiarmid98}{book}{
      author={McDiarmid, Colin},
      editor={Habib, M.},
      editor={McDiarmid, C.},
      editor={Ramirez-Alfonsin, J.},
      editor={Reed, B.},
       title={Probabilistic methods for algorithmic discrete mathematics},
      series={Algorithms and Combinatorics},
   publisher={Springer-Verlag, Berlin},
        date={1998},
      volume={16},
        ISBN={3-540-64622-1},
         url={https://doi.org/10.1007/978-3-662-12788-9},
      review={\MR{1678554}},
}

\bib{McDiarmid}{article}{
      author={McDiarmid, Colin},
       title={Concentration for independent permutations},
        date={2002},
        ISSN={0963-5483},
     journal={Combin. Probab. Comput.},
      volume={11},
      number={2},
       pages={163\ndash 178},
      review={\MR{1888907}},
}

\bib{MMoo}{incollection}{
      author={Moortgat, Michael},
       title={A note on multidimensional {D}yck languages},
        date={2014},
   booktitle={Categories and types in logic, language, and physics},
      series={Lecture Notes in Comput. Sci.},
      volume={8222},
   publisher={Springer, Heidelberg},
       pages={279\ndash 296},
         url={https://doi.org/10.1007/978-3-642-54789-8_16},
      review={\MR{3193350}},
}

\bib{Pach-Tardos}{incollection}{
      author={Pach, J\'anos},
      author={Tardos, G\'abor},
       title={Forbidden patterns and unit distances},
        date={2005},
   booktitle={Computational geometry ({SCG}'05)},
   publisher={ACM, New York},
       pages={1\ndash 9},
         url={https://doi.org/10.1145/1064092.1064096},
      review={\MR{2460341}},
}

\bib{Mgr}{incollection}{
      author={Ruci\'{n}ski, Andrzej},
      author={Sulej, Ma{\l g}orzata},
       title={Enumeration of ordered 3-uniform matchings with prescribed
  patterns of pairs of edges},
        date={2026},
        note={In preparation},
}

\bib{SZ}{article}{
      author={Sauermann, Lisa},
      author={Zakharov, Dmitrii},
       title={A sharp {Ramsey} theorem for ordered hypergraph matchings},
        date={2025:6},
     journal={Advances in Combinatorics},
}

\bib{Scheinerman1988}{article}{
      author={Scheinerman, E.~R.},
       title={Random interval graphs},
        date={1988},
        ISSN={0209-9683},
     journal={Combinatorica},
      volume={8},
      number={4},
       pages={357\ndash 371},
         url={https://doi.org/10.1007/BF02189092},
      review={\MR{981893}},
}

\bib{StanleyCatalan}{book}{
      author={Stanley, Richard~P.},
       title={Catalan numbers},
   publisher={Cambridge University Press, New York},
        date={2015},
        ISBN={978-1-107-42774-7; 978-1-107-07509-2},
      review={\MR{3467982}},
}

\bib{Talagrand}{article}{
      author={Talagrand, Michel},
       title={Concentration of measure and isoperimetric inequalities in
  product spaces},
        date={1995},
        ISSN={0073-8301},
     journal={Inst. Hautes \'{E}tudes Sci. Publ. Math.},
      number={81},
       pages={73\ndash 205},
      review={\MR{1361756}},
}

\end{biblist}
\end{bibdiv}

\end{document}